\newcommand{\GL}{\mathrm{GL}}
\newcommand{\Hom}{\mathrm{Hom}}
\newtheorem{thm}{Theorem}[section]
\newtheorem{lem}[thm]{Lemma}
\newtheorem{prop}[thm]{Proposition}
\newtheorem{coro}[thm]{Corollary}
\theoremstyle{remark}
\newtheorem{rem}[thm]{Remark}
\theoremstyle{definition}
\newtheorem{defn}[thm]{Definition}
\numberwithin{equation}{section}
\def\iddots{\mathinner{\mkern1mu\raise\p@
	\hbox{.}\mkern2mu\raise4\p@\hbox{.}\mkern2mu
	\raise7\p@\vbox{\kern7\p@\hbox{.}}\mkern1mu}}
\def\adots{\mathinner{\mkern2mu\raise\p@\hbox{.}
 \mkern2mu\raise4\p@\hbox{.}\mkern1mu
 \raise7\p@\vbox{\kern7\p@\hbox{.}}\mkern1mu}}
\title{The generalized linear periods}
\author{Hengfei LU}
\address{Department of Mathematics, University of Vienna, Oskar-Morgenstern-Platz 1, Wien 1090, Austria}
\email{hengfei.lu@univie.ac.at}
\begin{document}

\begin{abstract} Let $F$ be a  local field of characteristic zero. Let $\mu$ be a good character of 
	$\GL_p(F)\times\GL_{p+1}(F)$.
We study the generalized linear period problem for the pair $(G,H_{p,p+1})=(\GL_{2p+1}(F),\GL_{p}(F)\times\GL_{p+1}(F))$ and we prove that any  bi-$(H_{p,p+1},\mu)$-equivariant tempered generalized function on $G$ is invariant under the matrix transpose. We also show that any $P\cap H_{p,p+1}$-invariant 
linear functional on an $H_{p,p+1}$-distinguished irreducible smooth  representation of $G$ is also $H_{p,p+1}$-invariant if $F$ is nonarchimedean, where $P$ is the standard mirabolic subgroup of $G$ consisting of matrices with last row vector $(0,\cdots,0,1)$.
\end{abstract}
	\keywords  {distinction problems, invariant tempered generalized functions, Weil representation} 
\subjclass[2010]{22E50}
\maketitle
\tableofcontents
\section{Introduction}
Let $F$ be a local field of characteristic zero. Let $p,q,n$ be positive integers and $n=p+q$. Let $\theta_{p,q}$ be the involution defined on $\GL_n(F)$ given by
\[\theta_{p,q}(g)=\omega_{p,q}\cdot g\cdot\omega_{p,q} \]
for $g\in\GL_n(F)$ where
 $\omega_{p,q}=\begin{pmatrix}
\mathbf{1}_p\\&-\mathbf{1}_q
\end{pmatrix}$ and $\mathbf{1}_p$ (resp. $\mathbf{1}_q$) is the identity matrix in the $p\times p$ (resp. $q\times q$) matrix space $Mat_{p,p}(F)$ (resp. $Mat_{q,q}(F)$). Let $H_{p,q}$ be the fixed points of $\theta_{p,q}$ in $\GL_n(F)$. Then $H_{p,q}\cong \GL_p(F)\times\GL_q(F)$.  It is well known that the pair $(\GL_n(F),\GL_p(F)\times\GL_q(F) )$ satisfies the Gelfand-Kazhdan criterion \cite[\S7]{dima2009duke} with respect to the inverse map; see \cite{jacquet1996linear} for the non-archimedean case and \cite[Theorem 7.1.3]{dima2009duke} for the archimedean case. It implies that $\dim \Hom_{H_{p,q}}(\pi,\mathbb{C})\leq 1 $ for all irreducible admissible smooth representation $\pi$ of $\GL_n(F)$.
 Jacquet-Rallis \cite{jacquet1996linear} proved that if $\dim\Hom_{H_{p,q}}(\pi,\mathbb{C})=1$ and $F$ is non-archimedean, then $\pi\cong\pi^\vee$ where $\pi^\vee$ denotes the representation of $\GL_n(F)$ contragredient to $\pi$. When $p=q$, it  is closely related to the Shalika period problems (see \cite{sun2020}). Furthermore, Friedberg-Jacquet \cite{FJ1993} have studied the relation between the linear period of $\pi$ and the exterior square $L$-function $L(s,\pi,\Lambda^2)$.

This paper studies the twisted version of the linear period.
We say that a character of $F^\times$ is pseudo-algebraic if it has the form
\[t\mapsto\begin{cases}
1,&\mbox{ if }F\mbox{ is nonarchimedean,}\\
t^m,&\mbox{ if }F=\mathbb{R},\\
\iota(t)^m\iota'(t)^{m'},&\mbox{ if }F\cong\mathbb{C},
\end{cases} \]
where $m$ and $m'$ are non-negative integers and $\iota$ and $\iota'$ are two
distinct topological isomorphisms from $F$ to $\mathbb{C}$.
Let $\mu_F$ be a character of $F^\times$ and $\mu_F\circ\det$ be a character of $\GL_p(F)$. Let $\mu_F\circ\det \otimes\mathbb{C}$ be a character of $H_{p,q}$,  denoted by $\mu$.
 We say that $\mu$ is a \textbf{good} character of $H_{p,q}$ if
\begin{itemize}
	\item $\mu_F^{2r}|-|^{-s}$ is not pseudo-algebraic
\end{itemize}
for all $r\in\{\pm1,\pm2,\cdots,\pm p\}$ and all $s\in\{1,2,\cdots, 2p^2\}$. (See \cite{sun2020} for more details.)
From now on, we assume that $q=p+1$  throughout this paper, unless otherwise specified. One of
the main results in this paper is the following:
\begin{thm}\label{thm:A}
Suppose that $n=2p+1$.	Let $f$ be  a tempered generalized function on $\GL_n(F)$. If for every $h\in H_{p,p+1}$,
	\[f(hx)=f(xh)=\mu(h)f(x) \]
for $x\in\GL_n(F)$ and any good character $\mu$,	as generalized functions on $\GL_n(F)$, then 
\[f(x)=f(x^t). \]
\end{thm}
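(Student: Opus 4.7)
The plan is to adapt the classical Gelfand--Kazhdan strategy to the twisted tempered setting, along the lines of Sun's treatment of Shalika periods \cite{sun2020}.

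First, I would set $\widetilde{f}(x):=f(x^t)$. Because $H_{p,p+1}$ is stable under transpose and $\mu$ factors through $\det$, one has $\mu(h^t)=\mu(h)$, and a direct check shows that $\widetilde{f}$ is again bi-$(H_{p,p+1},\mu)$-equivariant. Hence $g:=f-\widetilde{f}$ is a bi-$(H_{p,p+1},\mu)$-equivariant tempered generalized function satisfying $g(x^t)=-g(x)$, and it suffices to prove that no such nonzero $g$ exists.

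Second, I would appeal to the Jacquet--Rallis geometric lemma from \cite{jacquet1996linear}: for every $x\in\GL_n(F)$ there exist $h_1,h_2\in H_{p,p+1}$ with $x^t=h_1xh_2$, so transpose preserves every $H_{p,p+1}\times H_{p,p+1}$-double coset. On the open (generic) stratum one can moreover arrange the representatives so that $\mu(h_1)\mu(h_2)=1$, whence the bi-equivariance of $g$ yields $g(x)=\mu(h_1)\mu(h_2)g(x^t)=-g(x)$ pointwise, so $g$ vanishes on the generic stratum. This would give the desired vanishing up to the closed complement.

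Third, I would extend the vanishing by descending induction on the codimension of the support of $g$ via the localization principle: in the non-archimedean case, Bernstein's localization reduces the problem to studying equivariant distributions on each stratum, while in the archimedean case I would invoke Aizenbud--Gourevitch's refinement for tempered generalized functions. At each closed stratum, Luna's slice theorem combined with Frobenius reciprocity translates the question into the existence of $\mu$-twisted equivariant tempered generalized functions on a finite-dimensional linear representation of the stabilizer. The good character hypothesis is designed exactly for this step: the ranges $r\in\{\pm 1,\ldots,\pm p\}$ and $s\in\{1,\ldots,2p^2\}$ enumerate the possible determinantal weights and degrees of homogeneous equivariant distributions that could appear on the relevant normal bundles, and the non-pseudo-algebraicity of $\mu_F^{2r}|-|^{-s}$ rules out the existence of the obstructing eigendistributions.

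The hardest part will be the third step, specifically the explicit Luna slice analysis for the symmetric pair $(\GL_{2p+1}(F),H_{p,p+1})$: identifying each closed stratum, computing the action of its stabilizer on the normal slice, and verifying that the weights of $\det$ on the normal bundle and the homogeneity degrees of equivariant tempered generalized functions fall exactly within the range covered by the goodness hypothesis. Temperedness adds a second layer, since one must keep track of Schwartz induction through every stage of the descent rather than work with arbitrary distributions; coordinating these two bookkeeping problems at each stratum is where the bulk of the technical work will lie.
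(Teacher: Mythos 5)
Your overall framework (reduce to showing a sign-equivariant $g=f-\widetilde f$ vanishes, stratify by closed double cosets, run a descent on normal bundles via slices) is in the spirit of the paper, but the plan has a genuine gap at its core. Your third step asserts that the good-character hypothesis alone rules out the obstructing eigendistributions on the normal bundles. That is not what happens. At a closed orbit $\widetilde{\mathcal H}_{p,p+1}x_{p,k}$ the normal bundle splits as $I_{k,k}\oplus I_{p-k,p+1-k}$, and only the $I_{k,k}$ factor carries a nontrivial twist by $\mu$; the goodness of $\mu$ kills the $I_{k,k}$ contribution by Chen--Sun. The $I_{p-k,p+1-k}$ factor, however, is acted upon with the \emph{trivial} restriction of $\mu$, so the good-character hypothesis says nothing there. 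The vanishing on that factor is the hard content of the paper and requires two ingredients you do not mention: (i) the Fourier transform and homogeneity argument on the nilpotent cone $\mathcal N_{p,p+1}$, phrased via graded $\mathfrak{sl}_2$-modules, which eliminates every nilpotent orbit \emph{except} the regular one (the one with $\mathrm{tr}(2-\mathbf h)|_{I^{\mathbf f}_{p,p+1}}=2p(p+1)$); and (ii) the decisive observation (Proposition \ref{observation}) that for that single surviving orbit one has $\sigma\cdot\mathbf e\in H_{p,p+1}\mathbf e$, so the sign character forces the distribution to vanish. Ingredient (ii) is precisely what fails for $(\GL_{2p+2},\GL_p\times\GL_{p+2})$ (Remark \ref{rem:fail}), so it cannot be absorbed into a generic homogeneity bookkeeping; your plan as written would ``prove'' the false statement in that case too.

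A secondary issue is your second step. The claim that transpose preserves every $H_{p,p+1}\times H_{p,p+1}$ double coset with representatives satisfying $\mu(h_1)\mu(h_2)=1$ is not an off-the-shelf consequence of Jacquet--Rallis, whose geometric lemma is stated for the inverse anti-involution and, in the form used here (Lemma \ref{Jac:lem}), only classifies \emph{closed} double cosets. Even granting a version of it for transpose on the open stratum, a pointwise evaluation of $g$ on an open dense set is not how the tempered machinery proceeds; the paper never restricts to an open stratum but works throughout with \cite[Corollary 3.2.2]{dima2009duke} (Theorem \ref{thm2.1}) directly on normal spaces of closed orbits. So the proposal both under-specifies the reduction at generic points and, more seriously, misattributes the entire burden of the degenerate strata to the good-character hypothesis, missing the Fourier/$\mathfrak{sl}_2$ analysis and the Gourevitch observation that are the actual crux.
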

Here and as usual, a superscript \say{t} indicates the transpose of a matrix. Then the pair $(\GL_{2p+1}(F),\GL_p(F)\times\GL_{p+1}(F) )$ satisfies the generalized Gelfand-Kazhdan criterion (see \cite[Theorem 2.3]{sunzhu2011}) with respect to the matrix transpose, which implies that
\[\dim\Hom_{H_{p,p+1} }(\pi,\mu)\cdot\dim\Hom_{H_{p,p+1}}(\pi^\vee,\mu^{-1}) \leq 1 \]
for any irreducible admissible smooth representation $\pi$ of $\GL_{2p+1}(F)$ and any good character $\mu$ of $H_{p,p+1}$.
 The analogue for the pair $(\GL_{2p}(F),\GL_p(F)\times\GL_p(F) )$ has been proved by Chen-Sun in \cite{sun2020}. We will use a similar idea appearing in \cite{sun2020} to prove Theorem \ref{thm:A}.

Define $I_{p,p+1}:=Mat_{p,p+1}(F)\oplus Mat_{p+1,p}(F)$ and $\mathcal{N}_{p,p+1}:=\{(x,y)\in I_{p,p+1}|(xy)^p=0 \}$.
Denote by $\mathscr{C}_{\mathcal{N}_{p,p+1}}(I_{p,p+1})$ the space consisting of tempered generalized functions on $I_{p,p+1}$ supported on $\mathcal{N}_{p,p+1}$.
By linearization, Theorem \ref{thm:A} is reduced to the following theorem.
\begin{thm}\label{tran:M}
	Let $f$ be a tempered generalized function on $I_{p,p+1}$ supported on the nilpotent cone $\mathcal{N}_{p,p+1}$  such that for $h=\begin{pmatrix}
	a\\& b
	\end{pmatrix}\in H_{p,p+1}$,
	\[f(axb^{-1},bya^{-1})=f(x,y) \]
holds	for any $(x,y)\in I_{p,p+1}$. Then $f(x,y)=f(y^t,x^t)$.
\end{thm}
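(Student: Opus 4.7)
The plan is to reduce Theorem \ref{tran:M} to a vanishing statement for the $\tau$-anti-invariant part of the relevant distribution space and then run an induction over the $H_{p,p+1}$-orbits in $\mathcal{N}_{p,p+1}$, mirroring the strategy of Chen--Sun in the balanced case \cite{sun2020}. Set $\tau(x,y):=(y^t,x^t)$, an involution on $I_{p,p+1}$ that preserves $\mathcal{N}_{p,p+1}$. A direct matrix calculation gives
\[
\tau\bigl((a,b)\cdot(x,y)\bigr)=\bigl((a^t)^{-1},(b^t)^{-1}\bigr)\cdot\tau(x,y),
\]
and since $(a,b)\mapsto((a^t)^{-1},(b^t)^{-1})$ is an automorphism of $H_{p,p+1}$, the function $f\circ\tau$ is $H_{p,p+1}$-invariant whenever $f$ is. Thus $g:=f-f\circ\tau$ is a tempered generalized function on $I_{p,p+1}$ which is $H_{p,p+1}$-invariant, $\tau$-anti-invariant, and supported on $\mathcal{N}_{p,p+1}$; the theorem is equivalent to showing that no such nonzero $g$ can exist.

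Next I would stratify $\mathcal{N}_{p,p+1}$ by the finitely many $H_{p,p+1}$-orbits, ordered by closure. The orbits are classified by combinatorial data attached to a pair $(x,y)$---notably the ranks of $x,y,xy,yx$ and the Jordan types of $xy$ and $yx$---and $\tau$ permutes them. By Bernstein's localization principle (cf.\ \cite[\S2]{dima2009duke}), the vanishing of $g$ on $\mathcal{N}_{p,p+1}$ reduces, orbit by orbit, to showing that every element of $\bigl(\mathrm{Sym}^\bullet(N_\xi^*)\bigr)^{H_\xi}$ is fixed by the induced involution $\tau_\xi$, where $H_\xi\subset H_{p,p+1}$ is the stabilizer of a base point $\xi\in\mathcal{O}$ and $N_\xi\subset I_{p,p+1}$ is the normal slice at $\xi$.

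I would first dispose of the small orbits---the origin, orbits with $x=0$ or $y=0$, and low-rank orbits---where the stabilizer and slice are explicit and $\tau_\xi$-invariance is immediate. On the higher-rank orbits I would use descent: the normal slice to an orbit of rank $r$ contains a natural copy of the nilpotent cone for a smaller $(p-r,p-r+1)$-problem, which permits inductive closure on most strata.

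The main obstacle, as in \cite{sun2020}, will be certain intermediate orbits whose normal slice carries a natural symplectic form and whose stabilizer is large. On these, the classification of $H_\xi$-invariants in $\mathrm{Sym}^\bullet(N_\xi^*)$ passes through the Weil representation of a metaplectic cover of (a subquotient of) $H_\xi$. One must compute the sign character with which $\tau_\xi$ acts on the relevant Weil-theoretic invariants and verify that this sign excludes anti-invariants. The unbalanced shape $p$ versus $p+1$ introduces one extra row or column in the normal slice compared with the balanced case treated in \cite{sun2020}, and the technical core of the argument is checking that this extra factor does not alter the sign computation, so that the Weil representation argument of \cite[\S\S3--5]{sun2020} carries over essentially unchanged.
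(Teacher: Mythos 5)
Your opening reduction is correct and matches the paper: setting $\tau(x,y)=(y^t,x^t)$, passing to $g=f-f\circ\tau$, and observing that it suffices to show there is no nonzero $H_{p,p+1}$-invariant, $\tau$-anti-invariant tempered generalized function supported on $\mathcal{N}_{p,p+1}$ is exactly the content of Theorem \ref{vanishing:I}. But from that point on your plan diverges from what the paper does, and—more importantly—it is built on a premise that the paper explicitly disavows.

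The paper does \emph{not} proceed by a Bernstein-style orbit-by-orbit localization on $\mathcal{N}_{p,p+1}$, nor is there any Weil-representation sign computation in the proof of Theorem \ref{tran:M}. (The Weil representation enters only later, in \S 6 for Theorem \ref{mir}.) Instead, the proof of Theorem \ref{vanishing:I} runs the Chen--Sun Fourier-transform-plus-homogeneity argument: if $f$ and $\mathfrak{F}(f)$ are both supported on a single $H_{p,q}$-orbit $\mathcal{O}=H_{p,q}\mathbf{e}$, then comparing the eigenvalue bound from Lemma \ref{key:lem} (in terms of $tr(2-\mathbf{h})|_{I_{p,q}^\mathbf{f}}$) with the homogeneity eigenvalue forced by Theorem \ref{duke} kills $f$---\emph{except} possibly on the regular nilpotent orbit, where equality $tr(2-\mathbf{h})|_{I_{p,p+1}^\mathbf{f}}=2p(p+1)$ can hold. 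This is the fundamental new feature of the unbalanced case $q=p+1$: unlike the balanced $p\times p$ situation of \cite{sun2020}, the Fourier/homogeneity argument alone does not exclude everything. Your proposal claims the balanced argument ``carries over essentially unchanged''; this is precisely what fails, and the paper says so in the introduction.

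The missing ingredient---the key idea of the proof, credited by the author to Gourevitch---is Proposition \ref{observation}: when the equality $tr(2-\mathbf{h})|_{I_{p,p+1}^\mathbf{f}}=2p(p+1)$ does hold, the surviving orbit $H_{p,p+1}\mathbf{e}$ is the regular nilpotent orbit, and $\sigma\cdot\mathbf{e}$ lies in that same orbit (via Kostant--Rallis). An $H_{p,p+1}$-invariant, $\sigma$-anti-invariant generalized function supported on a $\sigma$-stable orbit must vanish, closing the argument. Your proposal never identifies this orbit or this $\sigma$-stability as the crux; without it, the ``surviving'' contribution is not eliminated. The paper's Remark \ref{rem:fail} further shows that for general $(p,q)$ (e.g.\ $(6,8)$) an analogue of Proposition \ref{observation} is false, which confirms that this step is both necessary and delicate---a sign computation that ``carries over unchanged'' is not what is going on.

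In summary: your reduction to anti-invariance is right, but your execution plan is not the paper's and, as stated, has a genuine gap. You would need to (i) replace orbit-by-orbit localization with the global Fourier-transform/homogeneity argument (Proposition \ref{fourier}, Lemma \ref{key:lem}, Theorem \ref{duke}), and (ii) supply the missing observation that the one orbit escaping that argument, the regular nilpotent orbit, is $\sigma$-stable (Proposition \ref{observation}).
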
 
There is a brief introduction to the proof of Theorem \ref{tran:M}. We will regard the $n$-dimensional vector space as a graded $\mathfrak{sl}_2(F)$-module. Chen-Sun \cite{sun2020} used the graded modules and Fourier transform to prove that there does not exist any $H_{p,p}$-invariant generalized function $f$ on  $Mat_{p,p}(F)\times Mat_{p,p}(F)$ such that both $f$ and its Fourier transform $\mathfrak{F}(f)$ are supported on the nilpotent cone of $Mat_{p,p}(F)\times Mat_{p,p}(F)$. However, there may exist $H_{p,p+1}$-invariant generalized functions $f_0$ on $I_{p,p+1}$ such that both $f_0$ and its Fourier transform $\mathfrak{F}(f_0)$ are supported on the orbit $H_{p,p+1}\mathbf{e}$ (the regular nilpotent orbit), where $\mathbf{e}^{2p}\neq 0$. There is a key observation due to Dmitry Gourevitch that $\mathbf{e}^t\in H_{p,p+1}\mathbf{e}$ and so if $f_0\in \mathscr{C}_{\mathcal{N}_{p,p+1}}(I_{p,p+1})^{\tilde{H}_{p,p+1},\chi}$ (see Theorem \ref{vanishing:I}) then $f_0=0$. Therefore,
\[\mathscr{C}_{\mathcal{N}_{p,p+1}}(I_{p,p+1})^{\tilde{H}_{p,p+1},\chi}=0 \]
i.e. Theorem \ref{tran:M} holds. (All the techniques in this paper work for the pair $(\GL_{2p+1}(F),\GL_{p+1}(F)\times\GL_p(F) )$ as well. But they do not work for the pair $(\GL_{2p+2}(F),\GL_p(F)\times\GL_{p+2}(F))$ because Proposition \ref{observation} fails; see Remark \ref{rem:fail}.) 
In fact, we will prove a stronger result that any $H_{p,p}$-invaraint generalized function on $I_{p,p+1}$ is also invariant under transposition, where $H_{p,p}$ is a proper subgroup of $H_{p,p+1}$. (See the proof of Theorem \ref{mir}.)

In a similar way, we can prove the following.
\begin{thm}\label{p=1}
	Let $f$ be a tempered generalized function on $\GL_n(F)$. Let $\mu_F$ be any character (not necessarily good) of $F^\times$. If for every $h\in H_{1,n-1}$,
	\[f(hx)=f(xh)=\mu_F(a)f(x) \]
	for $x\in\GL_n(F)$ and $h=\begin{pmatrix}
	a\\&b
	\end{pmatrix}$ with $a\in F^\times,b\in\GL_{n-1}(F)$, as generalized functions on $\GL_n(F)$, then
	\[f(x)=f(x^t). \]
\end{thm}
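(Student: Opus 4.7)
The plan is to run the same strategy as the proof of Theorem \ref{thm:A}, exploiting two simplifications available when $p = 1$: the character $\mu$ twists only the one-dimensional factor $\GL_1(F) = F^\times$ of $H_{1,n-1}$, and the regular nilpotent orbit is open and dense in the nilpotent cone.

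First I would linearize Theorem \ref{p=1} via a Cayley-type transform at a $\theta_{1,n-1}$-fixed point, just as the author reduces Theorem \ref{thm:A} to Theorem \ref{tran:M}. Because $\mu$ is a character of $F^\times$ alone, it can be absorbed into the basepoint; the result is a tempered generalized function $\tilde f$ on $I_{1,n-1} = F^{n-1} \oplus F^{n-1}$ supported on $\mathcal{N}_{1,n-1} = \{(x,y) : xy = 0\}$ and satisfying the untwisted equivariance $\tilde f(axb^{-1}, bya^{-1}) = \tilde f(x,y)$ for all $(a,b) \in F^\times \times \GL_{n-1}(F)$. The task reduces to showing $\tilde f(x,y) = \tilde f(y^t, x^t)$.

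Next I would enumerate the $H_{1,n-1}$-orbits on $\mathcal{N}_{1,n-1}$: the origin, $\mathcal{O}_1 = \{(x,0) : x \neq 0\}$, $\mathcal{O}_2 = \{(0,y) : y \neq 0\}$, and the regular orbit $\mathbf{O} = \{(x,y) : x,y \neq 0, xy = 0\}$. A dimension count gives $\dim \mathbf{O} = 2n-3 = \dim \mathcal{N}_{1,n-1}$, so $\mathbf{O}$ is open and dense. The transpose $\sigma : (x,y) \mapsto (y^t, x^t)$ preserves $\mathbf{O}$ setwise (since $y^t x^t = (xy)^t = 0$) and swaps $\mathcal{O}_1$ with $\mathcal{O}_2$. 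Gourevitch's observation applies: choosing $\mathbf{e} = ((1,0,\dots,0), (0,1,0,\dots,0)^t) \in \mathbf{O}$, the permutation matrix in $\GL_{n-1}(F)$ swapping the first two basis vectors sends $\mathbf{e}^\sigma$ into $H_{1,n-1} \cdot \mathbf{e}$. Hence every $H_{1,n-1}$-invariant generalized function on $\mathbf{O}$ is $\sigma$-invariant, so $g := \tilde f - \sigma^* \tilde f$ is a $\sigma$-antisymmetric $H_{1,n-1}$-invariant tempered generalized function supported on the closed lower-dimensional locus $\mathcal{O}_1 \cup \mathcal{O}_2 \cup \{(0,0)\}$.

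The main obstacle will be showing $g = 0$, namely the analogue of Theorem \ref{vanishing:I} for $\tilde H_{1,n-1}$ without any goodness hypothesis on $\mu_F$. I would adapt the proof of Theorem \ref{vanishing:I}, inducting on the orbit stratification and using the Fourier transform together with a Luna slice argument to reduce the study of $H_{1,n-1}$-invariant transversal data on $\mathcal{O}_1 \cup \mathcal{O}_2$ to a question about $F^\times$-equivariant generalized functions on an explicit vector space. For general $p$ this descent introduces characters $\mu_F^{2r}|\cdot|^{-s}$ whose pseudo-algebraicity must be excluded by the goodness assumption to prevent anomalous invariants from surviving; but for $p = 1$ the transversal analysis involves only the elementary $F^\times$-action on a single copy of $F^{n-1}$, and the potentially surviving invariants can be checked directly to be $\sigma$-symmetric rather than $\sigma$-antisymmetric, so they cannot contribute to $g$. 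This verification, carried out stratum by stratum, forces $g = 0$ and completes the proof.
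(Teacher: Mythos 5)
Your strategy tracks the paper's closely --- linearize, apply the Aizenbud--Gourevitch localization, use Gourevitch's observation $\sigma\cdot\mathbf{e}\in H_{1,n-1}\mathbf{e}$ on the open stratum, then kill the lower strata --- and the orbit enumeration and the explicit permutation-matrix conjugation of $\mathbf{e}$ to $\sigma\cdot\mathbf{e}$ are both correct. But there are two genuine gaps. The first is that ``linearizing at a $\theta_{1,n-1}$-fixed point'' only produces the normal space at a single closed orbit, whereas Theorem~\ref{thm2.1} requires vanishing on the normal bundle at \emph{every} closed $\mathcal{H}_{1,n-1}$-orbit; by Lemma~\ref{Jac:lem} there are others besides the identity coset, namely the case $\nu=0, k=1$ (normal space $F\oplus F$, where one uses that $\sigma$ fixes the nilpotent cone of $I_{1,1}$ pointwise) and the case $k=0,\nu=1$ (normal space $F$ with trivial action). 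The paper's proof treats these three cases separately; you address only the first.

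The second gap is in the central step, showing the antisymmetric part $g$ vanishes on $\mathcal{O}_1\cup\mathcal{O}_2\cup\{0\}$. The claim that the ``potentially surviving invariants can be checked directly to be $\sigma$-symmetric'' is neither proved nor, as stated, correct: there \emph{are} $\sigma$-antisymmetric $H_{1,n-1}$-invariant generalized functions supported on $\overline{\mathcal{O}_1}\cup\overline{\mathcal{O}_2}$ (for instance an antisymmetric combination of orbital integrals). What eliminates them is the requirement that the (partial) Fourier transform also be supported on the nilpotent cone, and this is exactly what the paper proves in \eqref{NIH} via Proposition~\ref{prop:max}: writing $I_{1,n-1}=I_{1,n-2}\oplus V\oplus V^\ast$, nilpotency forces $v^\ast(v)=0$, hence $v=0$ or $v^\ast=0$; the partial Fourier transform along $V\oplus V^\ast$ preserves the support constraint; and \cite[Lemma 6.3.4]{aizenbud2013partial} then says $(I_{1,n-2}\oplus V\oplus\{0\})\sqcup(I_{1,n-2}\oplus\{0\}\oplus V^\ast)$ carries no nonzero $H_{1,n-2}$-invariant generalized function. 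Your sketch mentions ``the Fourier transform together with a Luna slice argument'' but never ties these to the vanishing, so the crucial step remains a placeholder rather than an argument.
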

\begin{rem}
	In \cite{eitan2008comp}, Aizenbud-Gourevitch-Sayag use a different method to obtain a stronger result that any bi-$\GL_{n-1}(F)$-invariant generalized function on $\GL_n(F)$ is invariant with repect to transposition for any local field $F$ when $\mu_F$ is trivial.
	Here $\GL_{n-1}(F) $ is regarded as a proper subgroup of $H_{1,n-1}$. 
	Inspired by their results in \cite{eitan2008comp}, we have Theorem \ref{mir}.
	Furthurmore, there is a much stronger result  that any invariant distribution on $\GL_{n+1}(F)$ under the adjoint action of $\GL_{n}(F)$ is invariant  with respect to transposition (see  \cite{AGRS} for the non-archimedean case and \cite{AG2009selecta,sunzhu2012annals} for the archimedean case), which recently has been extended to the case when $F$ is of positive characteristic different from $2$ (see \cite{mezer2019multiplicity}).
\end{rem}

Finally, we give one application to the vanishing of certain distributions which are equivariant under 
transposition when $F$ is non-archimedean. 
More precisely, we have shown that any $H_{p,p}$-invariant generalized function on $I_{p,p+1}$ supported on $\mathcal{N}_{p,p+1}$ is also invariant under transposition, which implies that
any $P\cap H_{p,p+1}$-invariant linear functional on an $H_{p,p+1}$-distinguished irreducible smooth  representation of $\GL_{2p+1}(F)$ is also $H_{p,p+1}$-invariant, which extends the result of Maxim Gurevich in \cite{maxmirabolic},
 where $P$ is a standard mirabolic subgroup of $\GL_{2p+1}(F)$. (See Theorem \ref{mir}.) This is the original motivation of writing this paper.

The paper is organized as follows. In \S2, we introduce some notation about the algebraic geometry. Then we will use Chen-Sun's method to prove Theorem \ref{tran:M} in \S3. The proof of Theorem \ref{thm:A} will be given 
in \S4, which heavily depends on the results of Aizenbud-Gourevitch (see Theorem \ref{thm2.1}). We shall prove Theorem \ref{p=1} in \S5. The last section studies the role of the mirabolic subgroup acting on the symmetric variety $\GL_{n}(F)/\GL_{p}(F)\times\GL_{n-p}(F)$ following Maxim Gurevich in \cite{maxmirabolic}.

\section{Preliminaries and notation}

Let $X$ be an $\ell$-space (i.e. locally compact totally disconnected topological space) if $F$ is non-archimedean 
or a Nash manifold (see \cite[\S2.3]{dima2009duke}) if $F$ is archimedean. Let $\mathscr{C}(X)$ denote the space of tempered generalized functions on $X$. Let a reductive group $G(F)$
act on an affine variety $X$. Let $x\in X$ such that its orbit $G(F)x$ is closed in $X$. We denote the normal bundle by $N_{G(F)x}^X$ and denote its fiber (the normal space) at the point $x$ by $N_{G(F)x,x}^X$. Let $$G_x:=\{g\in G(F)|gx=x \}$$ be the stalizer subgroup of $x$. Let $\chi$ be a character of $G(F)$. Denote by $\mathscr{C}(X)^{G(F),\chi}$ the subspace in $\mathscr{C}(X)$ consisting of those tempered generalized functions $f$ satisfying
\[g\cdot f=\chi(g)f \] 
for all $g\in G(F)$. If $\chi$ is trivial, then it will be denoted by $\mathscr{C}(X)^{G(F)}$.
\begin{thm}\cite[Theorem 3.1.1]{dima2009duke}
	Let $G(F)$ act on a smooth affine variety $X$. Let $\chi$ be a character of $G(F)$. Suppose that for any closed orbit $G(F)x$ in $X$, we have
	\[\mathscr{C}(N^X_{G(F)x,x})^{G_x,\chi}=0. \]
	Then \[\mathscr{C}(X)^{G(F),\chi}=0. \]\label{thm:duke}
\end{thm}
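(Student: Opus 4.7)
The plan is to argue by contradiction, combining the closed orbit principle for reductive actions on affine varieties with a Luna-type slice theorem and a Frobenius descent for equivariant tempered distributions. Assume for contradiction that there is a nonzero $f\in\mathscr{C}(X)^{G(F),\chi}$, let $Z:=\mathrm{supp}(f)$, a closed $G(F)$-invariant subset of $X$, and use Noetherian induction on closed $G(F)$-invariant subsets to reduce to the following local situation: there is a closed $G(F)$-orbit $G(F)x\subset Z$ which is the unique closed orbit inside some $G(F)$-invariant neighborhood $U$ of $G(F)x$ in $X$. The existence of such a closed orbit inside any closed invariant subset is the standard closed-orbit principle for reductive group actions on affine varieties.

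Having localized near $G(F)x$, I would apply a Luna-type slice theorem to identify $U$ $G(F)$-equivariantly with a $G(F)$-invariant neighborhood of the zero section in the homogeneous bundle $G(F)\times^{G_x}N^X_{G(F)x,x}$ over $G(F)x\simeq G(F)/G_x$. In the $\ell$-space setting this follows from the implicit function theorem plus Bernstein's localization; in the Nash manifold setting it is the Nash slice theorem of Aizenbud-Gourevitch, which crucially preserves the Schwartz class. Pulling $f|_U$ back along this isomorphism yields an element of $\mathscr{C}\bigl(G(F)\times^{G_x}N^X_{G(F)x,x}\bigr)^{G(F),\chi}$ supported near the zero section. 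Now I would invoke the Frobenius descent
\[
\mathscr{C}\bigl(G(F)\times^{G_x}N\bigr)^{G(F),\chi}\;\cong\;\mathscr{C}(N)^{G_x,\chi}
\]
for equivariant tempered distributions on a homogeneous vector bundle, applied to $N=N^X_{G(F)x,x}$. By hypothesis the right-hand side vanishes, so $f|_U=0$, contradicting $x\in\mathrm{supp}(f)$. Iterating this argument along the stratification by orbit-closure dimension gives $\mathscr{C}(X)^{G(F),\chi}=0$.

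The main obstacle is the archimedean case. Over $\ell$-spaces the slice argument and Frobenius descent are essentially consequences of Bernstein's original localization techniques. Over real manifolds, however, the entire argument must be carried out inside the category of Nash manifolds with Schwartz (i.e.\ tempered) distributions: one needs Nash submersions to have a good pullback/pushforward theory for Schwartz functions, the slice isomorphism to be a Nash diffeomorphism, and Frobenius descent to respect the Schwartz topology. Verifying these properties is the technically substantial part of the Aizenbud-Gourevitch framework, and it is where most of the work would go.
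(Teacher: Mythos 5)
The paper does not prove this statement; it simply cites \cite[Theorem 3.1.1]{dima2009duke} (Aizenbud--Gourevitch, generalized Harish-Chandra descent). So there is no ``paper's proof'' to compare against, and your attempt has to be judged against the argument in that reference.

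Your sketch does capture the two main ingredients of the Aizenbud--Gourevitch proof -- a slice theorem and Frobenius descent for equivariant tempered generalized functions -- but there is a genuine gap at the step where you pass from the slice picture to the normal space. The Luna/Nash slice theorem identifies a $G(F)$-invariant neighborhood $U$ of the closed orbit $G(F)x$ with a $G(F)$-invariant neighborhood $W$ of the \emph{zero section} in $G(F)\times^{G_x}N^X_{G(F)x,x}$, not with the whole homogeneous bundle. After Frobenius descent you therefore obtain an element of $\mathscr{C}(W_0)^{G_x,\chi}$ for some $G_x$-invariant open neighborhood $W_0$ of $0$ in $N:=N^X_{G(F)x,x}$, whereas the hypothesis is the vanishing of $\mathscr{C}(N)^{G_x,\chi}$ on \emph{all} of $N$. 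Vanishing of equivariant distributions on $N$ does not formally imply vanishing on an open subset $W_0$, and your proposal asserts without justification that the pullback already lands in $\mathscr{C}\bigl(G(F)\times^{G_x}N\bigr)^{G(F),\chi}$. In Aizenbud--Gourevitch this gap is closed by a different organizing principle: they first apply their localization principle to the GIT quotient $\pi\colon X\to X/\!/G$ to reduce to a single fiber $\pi^{-1}(y)$, which contains a unique closed orbit; they then invoke a \emph{saturated} version of the slice theorem, so that $W_0$ is $\pi_N$-saturated and hence contains the whole null cone $\Gamma(N)=\pi_N^{-1}(0)$, and the descended generalized function is supported on $\Gamma(N)$. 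At that point it extends by zero to an element of $\mathscr{C}_{\Gamma(N)}(N)^{G_x,\chi}\subset\mathscr{C}(N)^{G_x,\chi}=0$. Your Noetherian induction on ``closed $G(F)$-invariant subsets'' is not the right scaffolding for this: the support of a tempered generalized function is closed in the analytic topology but in general not Zariski-closed, so the algebraic closed-orbit principle does not directly apply to it, and over a local field $F$ one must moreover take care that the closed $G(\bar F)$-orbit guaranteed by the algebraic statement actually has an $F$-rational point in the support. These are exactly the points the GIT-quotient localization is designed to handle, and they need to be made explicit rather than absorbed into the slice isomorphism.
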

 If $V$ is a finite
dimensional representation of $G(F)$, then we denote the nilpotent cone in $V$ by
\[\Gamma(V):=\{ x\in V|\overline{G(F)x}\owns 0 \}. \] 
Let $Q_G(V):=V/V^G$. There is a canonical embedding $Q_G(V)\hookrightarrow V$ (see \cite[Notation 2.3.10]{dima2009duke}). Set $R_G(V):=Q(V)\setminus \Gamma(V)$. There is a stronger version of Theorem \ref{thm:duke}. 
\begin{thm}\cite[Corollary 3.2.2]{dima2009duke}.\label{thm2.1}
	Let $X$ be a smooth affine variety. Let $G(F)$ act on $X$. Let $K\subset G(F)$ be an open subgroup and let $\chi$ be a character of $K$. Suppose that for any closed orbit $G(F)x$ such that
	\[\mathscr{C}(R_{G_x}(N_{G(F)x,x}^X))^{K_x,\chi}=0 \]
	we have \[\mathscr{C} (Q_{G_x}(N^X_{G(F)x,x}))^{K_x,\chi}=0. \]
	Then $\mathscr{C}(X)^{K,\chi}=0$.
\end{thm}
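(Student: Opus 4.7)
The plan is to establish Theorem~\ref{thm2.1} as a Noetherian-induction refinement of Theorem~\ref{thm:duke}, where the hypothesis of the theorem is bootstrapped so that the $K_x$-normal-space vanishing holds at every closed orbit. The overall architecture mirrors the proof of Theorem~\ref{thm:duke}: a minimal closed orbit argument combined with Frobenius descent and a reductive decomposition of the normal space, but now with an additional inductive layer that exploits the difference between $Q_{G_x}$ and $R_{G_x}$.

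First, I would set up the induction on a well-founded invariant measuring the complexity of the $G(F)$-action on $X$, for instance the pair (maximal stabilizer dimension, $\dim X$) with lexicographic order. Let $f\in \mathscr{C}(X)^{K,\chi}$ and assume for contradiction $f\neq 0$. Pick a minimal closed orbit $G(F)x\subseteq \mathrm{supp}(f)$. By Frobenius descent (together with Luna's \'etale slice theorem in the algebraic setting, or the Nash / $\ell$-space analogues used in \cite{dima2009duke}), showing that $f$ vanishes on a $G(F)$-invariant neighborhood of $G(F)x$ reduces to showing $\mathscr{C}(N)^{K_x,\chi}=0$, where $N := N^X_{G(F)x,x}$ is the normal space at $x$.

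Second, I would decompose $N = N^{G_x}\oplus Q_{G_x}(N)$ as a $G_x$-module, using reductivity of $G_x$. Since $K_x\subseteq G_x$ acts trivially on the fixed part $N^{G_x}$, the external product structure identifies $\mathscr{C}(N)^{K_x,\chi}$ with $\mathscr{C}(N^{G_x})\mathbin{\widehat\otimes}\mathscr{C}(Q_{G_x}(N))^{K_x,\chi}$, so the problem reduces to showing $\mathscr{C}(Q_{G_x}(N))^{K_x,\chi}=0$. The hypothesis of the theorem delivers this vanishing as soon as we verify the premise $\mathscr{C}(R_{G_x}(N))^{K_x,\chi}=0$. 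To prove that premise, I would apply the theorem recursively to the action of $G_x$ on the open subvariety $R_{G_x}(N)\subseteq Q_{G_x}(N)$: every point $y\in R_{G_x}(N)$ has $\overline{G_x y}\not\ni 0$, forcing the stabilizer $(G_x)_y$ to have strictly smaller dimension than $G_x$, so the chosen invariant drops and the inductive hypothesis applies to every closed $G_x$-orbit encountered there.

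The main obstacle will be making the Noetherian induction rigorous, i.e.\ choosing an invariant which genuinely decreases under the passage from $(X, G(F), x)$ to $(R_{G_x}(N), G_x, y)$ at every relevant step. Luna-stratification complexity or a lexicographic pair built out of stabilizer dimension plus ambient dimension are the natural candidates; one must also check that the open-set reduction from $Q_{G_x}(N)$ to $R_{G_x}(N)$ preserves the smooth-affine hypotheses used for Frobenius descent. Once this bookkeeping is in place, the remainder of the argument is a mechanical combination of descent and reductive decomposition, and the theorem follows exactly as in \cite[Corollary 3.2.2]{dima2009duke}.
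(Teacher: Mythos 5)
Two things to note before the technical assessment. First, the paper does not prove Theorem \ref{thm2.1} at all: it is quoted verbatim as \cite[Corollary 3.2.2]{dima2009duke} and simply cited, so there is no in-paper argument to compare against; I therefore judge your attempt against the proof in Aizenbud--Gourevitch. Second, your outline does capture the broad architecture there: Frobenius/Luna descent to the normal space $N$, the reductive splitting $N = N^{G_x}\oplus Q_{G_x}(N)$, the factoring out of the trivial summand $N^{G_x}$ (which is \cite[Proposition 2.5.8]{dima2009duke}), and a Noetherian termination argument keyed to the drop in stabilizer dimension on $R_{G_x}(N)$.

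The gap is in the recursive call. To establish the premise $\mathscr{C}(R_{G_x}(N))^{K_x,\chi}=0$ by applying the theorem to the new instance $(G_x,\,R_{G_x}(N),\,K_x,\,\chi)$, you have to verify that this new instance itself satisfies the theorem's \emph{hypothesis}, i.e.\ the $R\Rightarrow Q$ implication at every closed $G_x$-orbit inside $R_{G_x}(N)$. Noting that stabilizers $(G_x)_y$ for $y\in R_{G_x}(N)$ are proper only guarantees that your well-founded invariant strictly drops, hence that the recursion terminates; it does not by itself discharge the hypothesis for the new pair, and without that the argument is circular. What actually discharges it in \cite{dima2009duke} is the Luna-slice orbit correspondence together with the conical structure of $R_{G_x}(N)$: since $R_{G_x}(N)$ is $F^\times$-invariant, any closed $G_x$-orbit there can be rescaled into an arbitrarily small neighborhood of $0$ in $N$, where the slice theorem identifies closed $G_x$-orbits with closed $G$-orbits in $X$ near $G(F)x$, with matching normal spaces up to a trivial summand killed by the functor $Q$. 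The original hypothesis on $X$ then transfers to $(G_x,R_{G_x}(N))$, and the induction closes. This transfer step is exactly what your phrase ``the inductive hypothesis applies to every closed $G_x$-orbit encountered there'' elides, and it is the substantive content of the corollary beyond Theorem \ref{thm:duke}.
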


\section{A vanishing result of generalized functions}
In this section, we shall use $q$ to denote $p+1$.
Let 
$$I_{p,q}=Mat_{p,q}(F)\oplus Mat_{q,p}(F)=\Big\{\begin{pmatrix}
0&x\\ y&0
\end{pmatrix}:x\in Mat_{p,q}(F),y\in Mat_{q,p}(F) \Big\}\subset \mathfrak{gl}_n(F).$$
Denote by 
\[\mathcal{N}_{p,q}:=\{(x,y)\in I_{p,q}|x y \mbox{ is a nilpotent matrix in }Mat_{p,p}(F) \} \]
the nilpotent cone in $I_{p,q}$.
 Denote $\tilde{H}_{p,q}:=H_{p,q}\rtimes\langle\sigma\rangle$ where $\sigma$ acts on $H_{p,q}$ by the involution 
$$\begin{pmatrix}
a\\&b
\end{pmatrix}\mapsto\begin{pmatrix}
(a^{-1})^t\\&(b^{-1})^t
\end{pmatrix}.$$
 The group $\tilde{H}_{p,q}$ acts on $I_{p,q}$ by
\[\begin{pmatrix}
a\\&b
\end{pmatrix}\cdot (x,y)=(axb^{-1},bya^{-1}) \]
and $$\sigma\cdot (x,y)=(y^t,x^t)$$
 for $(x,y)\in I_{p,q}$. Let $\chi$ be the sign character of $\tilde{H}_{p,q}$, i.e.
$\chi|_{H_{p,q}}$ is trivial and
\[\chi(\sigma)=-1. \]
Denoted by $\mathscr{C}_{\mathcal{N}_{p,q}}(I_{p,q})$ the space of tempered generalized functions on $I_{p,q}$ supported on $\mathcal{N}_{p,q}$. Set
\[\mathscr{C}_{\mathcal{N}_{p,q}}(I_{p,q})^{\tilde{H}_{p,q},\chi}:=\{f\in\mathscr{C}_{\mathcal{N}_{p,q}}(I_{p,q})|g\cdot f=\chi(g)f\mbox{  for all  }g\in\tilde{H}_{p,q} \}. \]
\begin{thm}\label{vanishing:I}
	We have $\mathscr{C}_{\mathcal{N}_{p,q}}(I_{p,q})^{\tilde{H}_{p,q},\chi}=0$.
\end{thm}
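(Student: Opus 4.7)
The plan is to stratify $\mathcal{N}_{p,q}$ (with $q=p+1$) by $H_{p,q}$-orbits and establish the vanishing orbit-by-orbit via descending induction on orbit dimension. Each $(x,y)\in\mathcal{N}_{p,q}$ gives a nilpotent endomorphism of $F^n=F^p\oplus F^q$ with off-diagonal blocks $x$ and $y$; completing it to an $\mathfrak{sl}_2(F)$-triple decomposes $F^n$ as a graded $\mathfrak{sl}_2$-module, and the $H_{p,q}$-orbit of $(x,y)$ is determined by this graded type. There is a unique open orbit $H_{p,q}\mathbf{e}$ with $\mathbf{e}^{2p}\neq 0$.

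For orbits strictly smaller than $H_{p,q}\mathbf{e}$, I would adapt the graded $\mathfrak{sl}_2$-module and Fourier-transform argument of Chen--Sun \cite{sun2020}. Any $(\tilde{H}_{p,q},\chi)$-equivariant $f$ is $H_{p,q}$-invariant, hence so is $\mathfrak{F}(f)$; a weight analysis of the transverse slice at a non-regular $e_0$ then shows, exactly as in the symmetric $(p,p)$ case of \cite{sun2020}, that $f$ and $\mathfrak{F}(f)$ cannot both be supported on the nilpotent cone near $e_0$ unless they vanish there. Successive applications of Theorem~\ref{thm2.1} strip every non-regular orbit out of the support of $f$, leaving $f$ concentrated on the regular orbit $H_{p,q}\mathbf{e}$.

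The regular orbit is handled by Proposition~\ref{observation}: since $\mathbf{e}^t\in H_{p,p+1}\mathbf{e}$, there exists $h_0\in H_{p,q}$ with $\sigma\cdot\mathbf{e}=h_0\cdot\mathbf{e}$, so the element $h_0^{-1}\sigma$ lies in the $\tilde{H}_{p,q}$-stabilizer of $\mathbf{e}$ and acts on $f$ via the scalar $\chi(h_0^{-1}\sigma)=\chi(\sigma)=-1$. Since the restriction of $f$ to a single orbit must transform under the stabilizer by the character $\chi$, this forces $f$ to vanish on the regular orbit, completing the proof.

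The main obstacle I anticipate is executing the Chen--Sun-style descent in the asymmetric $q=p+1$ setting: the graded-module weight analysis and the compatibility of the Fourier transform with the $(\tilde{H}_{p,q},\chi)$-equivariance must be re-derived while keeping track of the imbalance between $F^p$ and $F^{p+1}$. As Remark~\ref{rem:fail} indicates, the strategy collapses for $(\GL_p,\GL_{p+2})$ precisely because Proposition~\ref{observation} fails there, so the induction has to be set up so that every transverse slice problem that arises is again of balanced $(p',p')$-type (handled by \cite{sun2020}) or of the inductive $(p',p'+1)$-type, never of the forbidden $(p',p'+2)$-type. Verifying this combinatorial constraint on the slices of the nilpotent orbits is the delicate point of the argument.
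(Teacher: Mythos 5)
Your proposal follows essentially the same route as the paper: reduce to a single $H_{p,q}$-orbit, kill non-regular orbits by a Fourier-transform/homogeneity argument in the style of Chen--Sun, and kill the regular orbit by Proposition~\ref{observation} via the $\sigma$-stabilizer. That is exactly the paper's two-step proof, with Proposition~\ref{fourier} (whose engine is the homogeneity Theorem~\ref{duke} and Lemma~\ref{key:lem}) doing the first step and the observation $\sigma\cdot\mathbf{e}\in H_{p,q}\mathbf{e}$ doing the second.

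Two points of your framing are slightly off-track relative to what the paper actually does, though neither is a fatal error. First, you invoke ``successive applications of Theorem~\ref{thm2.1}'' to strip non-regular orbits from the support of $f$; but Theorem~\ref{thm2.1} is Aizenbud--Gourevitch's localization to closed orbits of the reductive group on the smooth affine variety, and the paper only uses it later, in the proof of Theorem~\ref{thm:dist}, to reduce the group-level statement to the nilpotent cone. The orbit-by-orbit vanishing inside $\mathcal{N}_{p,q}$ is carried out via Proposition~\ref{fourier}: for a fixed orbit $\mathcal{O}=H_{p,q}\mathbf{e}$ one compares the $F^\times$-eigenvalue $\eta^2=|-|^{tr(2-\mathbf{h})|_{I^{\mathbf{f}}_{p,q}}}\kappa_1$ from Lemma~\ref{key:lem} with the eigenvalue $|-|^{\dim I_{p,q}}\kappa_2^{-2}$ forced by Theorem~\ref{duke}; the mismatch kills $f$ unless $tr(2-\mathbf{h})|_{I^{\mathbf{f}}_{p,q}}=2pq$, which by the combinatorics of Lemma~\ref{mij} happens only for the regular orbit. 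Second, your anticipated ``delicate point'' --- that the transverse slices must stay of type $(p',p')$ or $(p',p'+1)$ --- does not actually arise: the eigenvalue comparison in Proposition~\ref{fourier} is run on the full space $I_{p,q}$ with the dilation action, not on a transverse slice realized as a smaller $I_{p',q'}$, so there is no induction over smaller pairs and no risk of landing on a forbidden $(p',p'+2)$ case. The issue that Remark~\ref{rem:fail} records is of a different nature: for $(6,8)$ there really is a non-regular orbit with $tr(2-\mathbf{h})|_{I^{\mathbf{f}}}=\dim I$, and on that orbit $\sigma\cdot\mathbf{e}\notin H\mathbf{e}$, so both steps of the argument fail there --- not because of any slicing obstruction.
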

The rest part of this section is devoted to proving Theorem \ref{vanishing:I}. Then Theorem \ref{tran:M} follows from Theorem \ref{vanishing:I} directly by definition.

Define a non-degenerate symmetric $F$-bilinear form on $\mathfrak{gl}_n(F)$ by
\[\langle z, w\rangle_{\mathfrak{gl}_n(F)}:=\mbox{the trace of }zw\mbox{ as a }F\mbox{-linear operator}. \]
Note that the restriction of this bilinear form on $I_{p,q}$ is still non-degenerate. Fix a non-trivial unitary character $\psi$ of $F$. Denote by
\[\mathfrak{F}:\mathscr{C}(I_{p,q})\longrightarrow \mathscr{C}(I_{p,q}) \]
the Fourier transform which is normalized such that for every Schwartz function $\varphi$ on $I_{p,q}$,
\[\mathfrak{F}(\varphi)(z)=\int_{I_{p,q}}\varphi(w)\psi(\langle z,w \rangle_{\mathfrak{gl}_n(F)})d w  \]
for $z\in I_{p,q}$, where $d w$ is the self-dual Haar measure on $I_{p,q}$. If $I_{p,q}$ can be decomposed into a direct sum of two quadratic subspaces $U_1\oplus U_2$ such that each $U_i$ is non-degenerate with respect to $\langle-,-\rangle|_{U_i}$, then we may define the partial Fourier transform
\[\mathfrak{F}_{U_1}(\varphi)(x,y)=\int_{U_1}\varphi(z,y)\psi( \langle x,z\rangle|_{U_1}) dz \]
for $x\in U_1,y\in U_2$ and $\varphi\in \mathscr{C}(U_1\oplus U_2)$. Similarly for $\mathfrak{F}_{U_2}(\varphi)$.
 It is clear that the Fourier transform $\mathfrak{F}$ intertwines the action of $\tilde{H}_{p,q}$. Thus we have the following lemma.
\begin{lem}
	The Fourier transform $\mathfrak{F}$ preserves the space $\mathscr{C}_{\mathcal{N}_{p,q}}(I_{p,q})^{\tilde{H}_{p,q},\chi}$.\label{lem:intertwin}
\end{lem}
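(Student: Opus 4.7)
The proof naturally splits into two parts: (i) verifying that $\mathfrak{F}$ commutes with the action of $\tilde{H}_{p,q}$ on tempered generalized functions, so that it preserves the $\chi$-isotypic subspace; and (ii) verifying that $\mathfrak{F}$ sends generalized functions supported on $\mathcal{N}_{p,q}$ to generalized functions supported on $\mathcal{N}_{p,q}$.

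For (i), I would first check that the bilinear form $\langle (x,y),(x',y')\rangle = \mathrm{tr}(xy')+\mathrm{tr}(x'y)$ on $I_{p,q}$ is $\tilde{H}_{p,q}$-invariant. For $h=\mathrm{diag}(a,b)\in H_{p,q}$ this reduces to conjugation-invariance of the trace: $\mathrm{tr}((axb^{-1})(by'a^{-1}))=\mathrm{tr}(a(xy')a^{-1})=\mathrm{tr}(xy')$, with the symmetric computation for the other summand. For $\sigma$, use that trace is transpose-invariant: $\mathrm{tr}(y^{t}x'^{t}) = \mathrm{tr}((x'y)^{t}) = \mathrm{tr}(x'y)$. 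A direct Jacobian computation then shows that the self-dual Haar measure on $I_{p,q}$ is $\tilde{H}_{p,q}$-invariant: the action of $\mathrm{diag}(a,b)$ has Jacobian $|\det a|^{q}|\det b|^{-p}\cdot|\det b|^{p}|\det a|^{-q}=1$, and $\sigma$ is volume preserving. Putting these together, $\mathfrak{F}(g\cdot\varphi)=g\cdot\mathfrak{F}(\varphi)$ for every Schwartz function $\varphi$ and every $g\in\tilde{H}_{p,q}$, and by duality the same identity holds on tempered generalized functions. Hence $\mathfrak{F}$ preserves the $(\tilde{H}_{p,q},\chi)$-eigenspace.

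For (ii), the preservation of support on $\mathcal{N}_{p,q}$ under $\mathfrak{F}$ is an instance of the symmetric-pair version of Harish-Chandra's descent principle: $I_{p,q}$ is the $(-1)$-eigenspace of the involution $\theta_{p,q}$ on $\mathfrak{gl}_n(F)$, and $\mathcal{N}_{p,q}$ coincides with the $H_{p,q}$-nilpotent cone $\Gamma(I_{p,q})$ in the sense of Section 2, cut out by the generators $\mathrm{tr}((xy)^k)$, $k=1,\ldots,p$, of the $H_{p,q}$-invariants. For an $H_{p,q}$-equivariant tempered generalized function supported on $\mathcal{N}_{p,q}$, the Fourier transform is again supported on $\mathcal{N}_{p,q}$. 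Combining (i) and (ii) yields the lemma.

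The step I expect to be the main obstacle is (ii): the verification in (i) is routine, whereas the preservation of the nilpotent support under the Fourier transform is not formal. If a clean direct citation is not applicable, I would argue by localization: take any closed semisimple non-nilpotent orbit $H_{p,q}\cdot s\subset I_{p,q}$, apply Luna's slice theorem to reduce to a $\tilde{H}_{p,q,s}$-equivariance problem on the normal slice $N_{H_{p,q}\cdot s,s}^{I_{p,q}}$, and use the homogeneity of Fourier transforms of nilpotent orbital integrals together with the vanishing of $f$ near $s$ to rule out any contribution to $\mathrm{supp}\,\mathfrak{F}(f)$ away from $\mathcal{N}_{p,q}$.
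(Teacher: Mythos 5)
The split into (i) equivariance and (ii) support-preservation is the right way to parse the statement, and your verification of (i) is a correct, more detailed version of what the paper actually writes: the paper's entire justification is the one sentence ``It is clear that the Fourier transform $\mathfrak{F}$ intertwines the action of $\tilde{H}_{p,q}$,'' which is exactly your step (i). So on (i) you and the paper agree; the paper simply declares it clear while you spell out the bilinear-form invariance and the Jacobian computation.

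On (ii), however, the main claim you make is false. It is \emph{not} true that ``for an $H_{p,q}$-equivariant tempered generalized function supported on $\mathcal{N}_{p,q}$, the Fourier transform is again supported on $\mathcal{N}_{p,q}$.'' The Dirac delta $\delta_0$ at the origin is $H_{p,q}$-invariant and supported on $\{0\}\subset\mathcal{N}_{p,q}$, yet $\mathfrak{F}(\delta_0)$ is a nonzero constant, whose support is all of $I_{p,q}$. This shows that $H_{p,q}$-equivariance together with nilpotent support does not force the Fourier transform to stay on the nilpotent cone; the statement is only rescued (if at all) by the extra $\sigma$-\emph{anti}-invariance encoded in the character $\chi$ (note $\delta_0$ is $\sigma$-invariant, so it does not lie in the $\chi$-eigenspace). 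Your write-up never invokes $\chi$ in step (ii), so the argument as written has a genuine gap: you would need to explain how the $\sigma$-anti-invariance forbids contributions to $\mathrm{supp}\,\mathfrak{F}(f)$ away from $\mathcal{N}_{p,q}$. Your fallback via localization at a semisimple non-nilpotent point and Luna's slice theorem is a plausible route, but it is stated as an afterthought rather than carried out, and even there you would have to make the $\chi$-dependence explicit. It is also worth flagging that the paper itself does not address (ii) at all — the one-line proof covers only (i) — so your instinct that this is the delicate half of the lemma is sound, but the concrete justification you offer does not close the gap.
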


\subsection{Reduction within the null cone}
Recall
\[\mathcal{N}_{p,q}=\{(x,y)\in I_{p,q}|x y \mbox{ is a nilpotent matrix in }Mat_{p,p}(F) \}. \]
Let $\mathcal{O}$ be an $H_{p,q}$-orbit in $\mathcal{N}_{p,q}$. Recall that every $\mathbf{e}\in\mathcal{O}$ can be extended to a graded $\mathfrak{sl}_2$-triple $\{\mathbf{h},\mathbf{e},\mathbf{f} \}$ (see \cite[Proposition 4]{kostant}) in the sense that 
\begin{equation}\label{sl}
[\mathbf{h},\mathbf{e}]=2\mathbf{e},~[\mathbf{h},\mathbf{f}]=-2\mathbf{f}\mbox{ and }[\mathbf{e},\mathbf{f}]=\mathbf{h} \end{equation}
where $\mathbf{f}\in\mathcal{N}_{p,q}$ and $\mathbf{h}\in\mathfrak{h}_{p,q}$, where $\mathfrak{h}_{p,q}=\mathfrak{gl}_p(F)\oplus\mathfrak{gl}_q(F)$ is the Lie algebra of $H_{p,q}$. Let $I_{p,q}^{\mathbf{f}}$ denote the elements in $I_{p,q}$ annihilated by $\mathbf{f}$ under the adjoint action of the $\mathfrak{sl}_2$-triple $\{\mathbf{h},\mathbf{e},\mathbf{f} \}$ on $I_{p,q}\subset\mathfrak{gl}_n(F)$. Then 
\[I_{p,q}=[\mathfrak{h}_{p,q},\mathbf{e}]+I_{p,q}^{\mathbf{f}}. \]
 Following \cite[Proposition 3.9]{sun2020},
we shall prove the following proposition in this subsection.
\begin{prop}\label{fourier}
	Let $f$ be a $H_{p,q}$-invariant tempered  generalized function on $I_{p,q}$ such that 
	 $f$  and its Fourier transforms $\mathfrak{F}(f)$  are all supported on an orbit $\mathcal{O}=H_{p,q}\mathbf{e}\subset \mathcal{N}_{p,q}$. If $tr(2-\mathbf{h})|_{I_{p,q}^\mathbf{f}}\neq 2pq$ and $F$ is non-archimedean, then $f=0$.
	 If $F$ is archimedean and $tr(2-\mathbf{h})|_{I_{p,q}^\mathbf{f}}$ is not of the form $2pq-e$ with $e\geq0$, then $f=0$. 
\end{prop}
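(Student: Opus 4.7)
The strategy is to adapt the homogeneity argument of \cite[Proposition~3.9]{sun2020} to the present asymmetric setting. First, I integrate $\mathbf{h}$ to a one-parameter subgroup $\rho\colon F^\times\to H_{p,q}$ (legitimate because $\mathbf{h}$ has integer weights on $I_{p,q}$ by $\mathfrak{sl}_2$-representation theory), so that $\mathrm{Ad}(\rho(t))\mathbf{e}=t^2\mathbf{e}$ and $\mathrm{Ad}(\rho(t))$ acts on the weight-$k$ subspace of $I_{p,q}$ by $t^k$. The twisted scaling
\[\gamma(t):=\lambda_{t^{-2}}\circ\mathrm{Ad}(\rho(t)),\]
where $\lambda_s$ denotes scalar multiplication by $s$, then fixes $\mathbf{e}$, stabilises the transverse slice $\mathbf{e}+I_{p,q}^{\mathbf{f}}$, and acts on the weight-$k$ subspace of $I_{p,q}^{\mathbf{f}}$ by $t^{k-2}$; since the $\mathbf{h}$-weights on $I_{p,q}^{\mathbf{f}}$ are non-positive (lowest $\mathfrak{sl}_2$-weights), $\gamma(t)$ is strictly contracting on the slice.

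By $H_{p,q}$-invariance of $f$ and the fact that $\mathrm{Ad}(\rho(t))$ commutes with scalar multiplication, one has $\gamma(t)^{*}f=\lambda_{t^{-2}}^{*}f$, and likewise for $\mathfrak{F}(f)$. Combining this with the Fourier scaling identity
\[\mathfrak{F}(\lambda_{t^{-2}}^{*}f)=|t|^{4pq}\lambda_{t^{2}}^{*}\mathfrak{F}(f)\]
forces both $f$ and $\mathfrak{F}(f)$ to be $\gamma(t)^{*}$-eigendistributions whose eigenvalues differ by the factor $|t|^{4pq}$ after inversion. The slice theorem (Theorem~\ref{thm2.1}) identifies $f$ with an $H_{p,q}^{\mathbf{e}}$-equivariant distribution on $I_{p,q}^{\mathbf{f}}$ supported at the origin; after bookkeeping the Jacobian along the orbit direction, the $\gamma(t)^{*}$-eigenvalue of a monomial $\partial^{a}\delta_{0}$ works out to $|t|^{\mathrm{tr}(2-\mathbf{h})|_{I_{p,q}^{\mathbf{f}}}-2pq+e}$ with $e=\sum (2-k_j)a_j\ge 0$, where the $k_j$ are the $\mathbf{h}$-weights on $I_{p,q}^{\mathbf{f}}$. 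Setting this exponent to zero gives
\[\mathrm{tr}(2-\mathbf{h})|_{I_{p,q}^{\mathbf{f}}}=2pq-e,\qquad e\ge 0.\]
In the non-archimedean case any distribution supported at a point of an $\ell$-space is a scalar multiple of $\delta_{0}$, so only $e=0$ is attainable; in the archimedean case any $e\ge 0$ arises via higher derivatives. The hypothesis of the proposition forbids every admissible value of $e$, so $f=0$.

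The main obstacle is the careful Jacobian bookkeeping at three places: the factor $|t|^{4pq}$ contributed by the Fourier transform (reflecting $\dim I_{p,q}=2pq$), the intrinsic homogeneity of $\partial^{a}\delta_{0}$ under the contracting $\gamma(t)^{*}$, and the orbit-direction determinant arising from the slice decomposition. These three contributions must combine to produce exactly the exponent $\mathrm{tr}(2-\mathbf{h})|_{I_{p,q}^{\mathbf{f}}}-2pq+e$; the hypothesis is calibrated precisely so that this exponent cannot vanish for any admissible $e$, at which point the argument closes.
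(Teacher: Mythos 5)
Your strategy is in the same spirit as the paper's---reduce the homogeneity constraint to a transverse slice, then use the Fourier-support condition to pin down $tr(2-\mathbf{h})|_{I_{p,q}^{\mathbf{f}}}$---but the paper and you diverge on how the Fourier-support condition is exploited, and that is exactly where your argument develops a genuine gap. The paper reads off the homogeneity of the orbit-supported distributions from Lemma~\ref{key:lem} (which is precisely the slice/Jacobian computation you carry out, packaged as a cited result of Chen--Sun), and then invokes the Aizenbud--Gourevitch homogeneity theorem, Theorem~\ref{duke}, applied to the whole space $L$ of generalized functions $f$ with $f$ and $\mathfrak{F}(f)$ supported on $\mathcal{O}$. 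Theorem~\ref{duke} does two things at once: it asserts that $L$ is a \emph{completely reducible} $F^{\times}$-representation, and it produces an eigenvalue of the form $|\cdot|^{\frac{1}{2}\dim I_{p,q}}\kappa^{-1}$. Setting the two expressions for $\eta^{2}$ equal gives the desired constraint, with the pseudo-algebraic characters supplying exactly the slack $e\ge 0$ in the archimedean case.

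You try to bypass Theorem~\ref{duke} with a direct Fourier-scaling computation. The step that breaks is the sentence "forces both $f$ and $\mathfrak{F}(f)$ to be $\gamma(t)^{*}$-eigendistributions." The Fourier-scaling identity alone does not force $f$ to be an eigenvector of the $F^{\times}$-action; one first needs to know that $L$ decomposes into eigenspaces. In the nonarchimedean case this is automatic, because the slice model $\mathscr{C}_{\{0\}}(I_{p,q}^{\mathbf{f}})^{H_{p,q}^{\mathbf{e}},\chi_{\mathcal{O}}}$ is at most one-dimensional (only $\delta_{0}$, no derivatives), so any nonzero $f$ is trivially an eigenvector, $\mathfrak{F}(f)$ is a scalar multiple of $f$, and your computation closes. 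In the archimedean case the slice model is spanned by the $\partial^{a}\delta_{0}$, and a finite-dimensional representation of $F^{\times}$ need not be semisimple (e.g.\ the representation involving $\log|t|$), so one cannot simply "pick an eigenvector in $L$." This complete reducibility is precisely the nontrivial content that Theorem~\ref{duke} supplies and that your argument is silently assuming. Relatedly, the phrase "setting this exponent to zero" is not justified by anything preceding it: the eigenvalue of $\gamma(t)^{*}$ on the slice is what Lemma~\ref{key:lem} computes, but it is equated to the Fourier-derived eigenvalue only once one knows $f$ and $\mathfrak{F}(f)$ are both genuine eigenvectors. So your sketch recovers the nonarchimedean half of the proposition, but the archimedean half requires the homogeneity theorem (or an equivalent semisimplicity input) that you have not established.
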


Denote by $\mathscr{C}_\mathcal{O}(I_{p,q})$ the space of tempered generalized functions on $I_{p,q}\setminus (\partial \mathcal{O})$ with support in $\mathcal{O}$, where $\partial \mathcal{O}$ is the complement of $\mathcal{O}$ in its closure in $I_{p,q}$. (See \cite[Notation 2.5.3]{dima2009duke}.) We will  use similar notation without further explaination.

Let $F^\times$ act on $\mathscr{C}(I_{p,q})$ by
\[(t\cdot f)(x,y)=f(t^{-1}x,t^{-1}y) \]
for $t\in F^\times$, $(x,y)\in I_{p,q}$ and $f\in\mathscr{C}(I_{p,q})$. The orbit $\mathcal{O}$ is invariant under dilation and so $F^\times$ acts on $\mathscr{C}_{\mathcal{O}}(I_{p,q})^{H_{p,q}}$ as well.
\begin{lem}\cite[Lemma 3.13]{sun2020}
	Let $\eta:F^\times\rightarrow\mathbb{C}^\times$ be an eigenvector for the action of $F^\times$ on $\mathscr{C}_{\mathcal{O}}(I_{p,q})^{H_{p,q}}$. Then
	$\eta^2=|-|^{tr(2-\mathbf{h})|_{I_{p,q}^\mathbf{f}}}\kappa$ for some pseudo-algebraic character $\kappa$ of $F^\times$.\label{key:lem}
\end{lem}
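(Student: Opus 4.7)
The plan is to exploit the commuting actions of $H_{p,q}$ and $F^{\times}$ on $I_{p,q}$ together with the $\mathfrak{sl}_2$-triple $\{\mathbf{h},\mathbf{e},\mathbf{f}\}$ so as to convert dilation into a linear automorphism fixing $\mathbf{e}$. By Bernstein's descent principle for equivariant distributions supported on a closed orbit, the space $\mathscr{C}_{\mathcal{O}}(I_{p,q})^{H_{p,q}}$ carries a natural filtration by order of normal derivative along $\mathcal{O}$, whose graded pieces are controlled by $(\mathrm{Sym}^{k}((I_{p,q}^{\mathbf{f}})^{*})\otimes \delta)^{G_{\mathbf{e}}}$. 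Here $G_{\mathbf{e}}$ is the stabilizer of $\mathbf{e}$ in $H_{p,q}$, the normal fibre at $\mathbf{e}$ is identified with $I_{p,q}^{\mathbf{f}}$ via $I_{p,q}=[\mathfrak{h}_{p,q},\mathbf{e}]+I_{p,q}^{\mathbf{f}}$, and $\delta$ encodes the modular twist coming from the comparison of invariant measures on $H_{p,q}$ and $G_{\mathbf{e}}$. Since the $F^{\times}$-action is compatible with this filtration, it suffices to compute the eigencharacter on each graded piece.

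For $t=s^{2}\in(F^{\times})^{2}$, the key point is that $s^{\mathbf{h}}\in H_{p,q}$ is well-defined (because $\mathbf{h}$ has integer $\mathrm{ad}$-eigenvalues on $\mathfrak{gl}_n(F)$) and satisfies $\mathrm{Ad}(s^{\mathbf{h}})\mathbf{e}=s^{2}\mathbf{e}=t\mathbf{e}$. Hence the linear automorphism $A_{s}$ of $I_{p,q}$ obtained by composing dilation by $s^{-2}$ with $\mathrm{Ad}(s^{\mathbf{h}})$ fixes $\mathbf{e}$, preserves the decomposition $I_{p,q}=[\mathfrak{h}_{p,q},\mathbf{e}]\oplus I_{p,q}^{\mathbf{f}}$, and acts on the $\mathrm{ad}(\mathbf{h})$-weight-$j$ subspace by multiplication by $s^{j-2}$. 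Using the $H_{p,q}$-invariance of $f$ to cancel the $\mathrm{Ad}(s^{\mathbf{h}})$-factor, the relation $t\cdot f=\eta(t)f$ becomes $A_{s}^{*}f=\eta(s^{2})f$, and the problem reduces to reading off how $A_{s}$ scales the graded pieces attached to $\mathcal{O}$.

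The analytic (absolute-value) contribution comes from the Jacobian of $A_{s}|_{I_{p,q}^{\mathbf{f}}}$ acting on a transverse delta-distribution at $\mathbf{e}$: explicitly
\[
|\det(A_{s}|_{I_{p,q}^{\mathbf{f}}})|^{-1}=|s|^{\mathrm{tr}((2-\mathbf{h})|_{I_{p,q}^{\mathbf{f}}})}.
\]
The remaining contribution, arising from the action of $A_{s}$ on $\mathrm{Sym}^{k}((I_{p,q}^{\mathbf{f}})^{*})$ and on the twist $\delta$, is an integer power of $s$, which after squaring becomes an integer power of $t=s^{2}$ and so is absorbed into a pseudo-algebraic character $\kappa$. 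This proves the identity $\eta(t)^{2}=|t|^{\mathrm{tr}((2-\mathbf{h})|_{I_{p,q}^{\mathbf{f}}})}\kappa(t)$ on $(F^{\times})^{2}$, and both sides being characters of $F^{\times}$, the identity extends globally (modulo absorbing an additional quadratic character into $\kappa$ when needed).

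The most delicate point is verifying that the combinatorial part of the scalar really does land in the pseudo-algebraic class in each of the three cases $F$ non-archimedean, $F=\mathbb{R}$, $F\cong\mathbb{C}$; this requires careful bookkeeping of the twist $\delta$ and of the weights appearing in $\mathrm{Sym}^{k}((I_{p,q}^{\mathbf{f}})^{*})$. The formulation of the lemma in terms of $\eta^{2}$ rather than $\eta$ is dictated by the restriction $t\in(F^{\times})^{2}$ needed to apply the $\mathfrak{sl}_{2}$-trick, as the square-root ambiguity in $s$ is only eliminated after squaring.
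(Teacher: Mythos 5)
Your approach is essentially the same as the one in Chen--Sun (which the paper cites for this lemma without reproducing the proof), and it mirrors the proof the paper gives for the analogous Lemma 6.4 (\texttt{sun:lem}): in both cases one uses the one-parameter subgroup of $H_{p,q}$ attached to $\mathbf h$ to twist the dilation into an automorphism fixing $\mathbf e$, and then one reads off the scaling on the transversal $I_{p,q}^{\mathbf f}$. The Jacobian computation $|\det(A_s|_{I_{p,q}^{\mathbf f}})|^{-1}=|s|^{\operatorname{tr}((2-\mathbf h)|_{I_{p,q}^{\mathbf f}})}$ is correct, and the filtration-by-transversal-derivatives language is an equivalent formulation of the submersion/Frobenius-reciprocity argument used in the paper.

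There is, however, a genuine slip in the final step. You set $t=s^{2}$, derive a formula for $\eta(t)$ on $(F^{\times})^{2}$, and then say the identity \emph{extends globally ``modulo absorbing an additional quadratic character into $\kappa$''}. That cannot be absorbed: for nonarchimedean $F$ the only pseudo-algebraic character is the trivial one, so a nontrivial quadratic character is \emph{not} pseudo-algebraic, and for $F=\mathbb R$ it can change the sign parity in a way that leaves the pseudo-algebraic class as well. As written your argument therefore does not establish that $\kappa$ is pseudo-algebraic. The correct route, which is exactly how Lemma 6.4 in the paper handles it, is not to restrict to $(F^\times)^2$ at all: since $\eta$ is a \emph{character} of $F^{\times}$, the relation $A_{s}^{*}f=\eta(s^{2})f$ immediately reads $\eta(s)^{2}=\eta(s^{2})=\bigl(\text{eigenvalue of }A_{s}\bigr)$ for \emph{every} $s\in F^{\times}$, with no square-root ambiguity and no passage through $(F^\times)^2$. (This is also the honest reason that the statement involves $\eta^{2}$: the weight of $\mathbf e$ under $\operatorname{ad}\mathbf h$ is $2$, so $\operatorname{Ad}(s^{\mathbf h})\mathbf e=s^{2}\mathbf e$ and the dilation parameter that appears is $s^{\pm2}$, whence $\eta(s^{\pm2})=\eta(s)^{\pm2}$; it is not because one must restrict $t$ to squares.) With this fix, the integer power of $s$ coming from the graded pieces $\operatorname{Sym}^{k}((I_{p,q}^{\mathbf f})^{*})$ is non-negative because the weights on $I_{p,q}^{\mathbf f}$ are $\le 0$ (lowest-weight vectors), so $A_s$ acts on the dual by $s^{2-j}$ with $2-j\ge 2$, and the resulting character is indeed pseudo-algebraic; in the nonarchimedean case the $\operatorname{Sym}^k$ part is vacuous, giving $\kappa=1$. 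Your remark about ``careful bookkeeping'' in the three cases is fair, but the quadratic-character caveat should be removed, not invoked.
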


Let $Q$ be a quadratic form on $I_{p,q}$ defined by
\[Q(x,y)=tr(x\circ y)+tr(y\circ x) \]
for $(x,y)\in I_{p,q}$. Denote by $Z(Q)$ the zero locus of $Q$  in $I_{p,q}(F)$. Then $\mathcal{N}_{p,q}\subset Z(Q)\subset I_{p,q}$. Recall the following homogeneity result on tempered generalized functions.
\begin{thm}\label{duke}
	\cite[Theorem 5.1.7]{dima2009duke}	Let $L$ be a non-zero subspace of $\mathscr{C}_{Z(Q)}(I_{p,q})$ such that for every $f\in L$, one has that $\mathfrak{F}(f)\in L$ and $(\psi\circ Q)\cdot f\in L$ for all unitary character $\psi$ of $F$. Then $L$ is a completely reducible $F^\times$-subrepresentation of ${\mathscr{C}}(I_{p,q})$, and it has an eigenvalue of the form 
	\[
|-|^{\frac{1}{2}\dim I_{p,q}}\kappa^{-1}
	 \]
	where $\kappa$ is a pseudo-algebraic character of $F^\times$.
\end{thm}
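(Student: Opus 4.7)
The plan is to interpret the closure conditions on $L$ as invariance under the Weil representation $\omega_\psi$ of the metaplectic cover $\widetilde{\SL}_2(F)$ attached to the quadratic space $(I_{p,q},Q)$, and then to read off the $F^\times$-eigenvalue from the standard torus formula for $\omega_\psi$. On $\mathscr{C}(I_{p,q})$ the generators act (up to a Weil constant) by
\[
\omega_\psi(n(b))f=\psi(bQ)\cdot f,\qquad \omega_\psi(w)f=\mathfrak{F}(f),\qquad \omega_\psi(m(a))f(x)=\gamma_\psi(a)|a|^{(\dim I_{p,q})/2}f(ax),
\]
where $\gamma_\psi$ is the Weil index, itself a pseudo-algebraic character of $F^\times$. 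Since every unitary character of $F$ has the form $\psi(b\,\cdot)$, closure of $L$ under all multiplications $(\psi\circ Q)\cdot$ is the same as closure under every $n(b)$, and closure under $\mathfrak{F}$ is closure under $w$. Since $\SL_2(F)$ is generated by the upper unipotent subgroup together with $w$ (because $wN(F)w^{-1}$ is the opposite unipotent, and the two root subgroups generate $\SL_2$), $L$ is automatically an $\SL_2(F)$-subrepresentation of $\omega_\psi|_{\mathscr{C}_{Z(Q)}(I_{p,q})}$.

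Next I would analyze this restricted representation. Because $Q$ vanishes on $Z(Q)$, the unipotent $n(b)$ acts by a locally unipotent operator: filter $\mathscr{C}_{Z(Q)}(I_{p,q})$ by order of transverse derivative along the quadric, and the Taylor expansion $\psi(bQ)=1+2\pi ibQ+\cdots$ shows that $n(b)-\mathrm{id}$ strictly lowers filtration degree. On the associated graded the split torus $m(F^\times)$ acts via its standard formula, and the Weyl element exchanges the top and bottom of the filtration. By the classification of admissible $\widetilde{\SL}_2(F)$-modules, the only torus characters that can occur on such a \say{small} $\SL_2$-stable subspace are those of the form $|-|^{1/2}\kappa_0^{-1}$ with $\kappa_0$ pseudo-algebraic (generic principal-series characters are excluded, since they do not extend consistently under $w$ on a support-restricted space). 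Complete reducibility as an $F^\times$-module follows because the filtration is finite and each graded piece decomposes into one-dimensional torus eigenspaces.

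Finally, I translate torus eigenvalues back to the dilation action $(a\cdot f)(x)=f(a^{-1}x)$ used in the theorem. The identity
\[
(a\cdot f)(x)=\gamma_\psi(a^{-1})^{-1}|a|^{(\dim I_{p,q})/2}\omega_\psi(m(a^{-1}))f(x)
\]
converts an eigenvalue of the permitted form for $m$ into an eigenvalue of the form $|-|^{(\dim I_{p,q})/2}\kappa^{-1}$ for the dilation action, after absorbing $\gamma_\psi$ together with the constituent torus character into a single pseudo-algebraic $\kappa$; this is exactly the shape asserted in the theorem.

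The step I expect to be the main obstacle is the middle one: proving that only such \say{small} torus characters appear on $\omega_\psi|_{\mathscr{C}_{Z(Q)}(I_{p,q})}$ and that the resulting $F^\times$-action is completely reducible. Over non-archimedean $F$ this is handled by Jacquet-module computations on the support-filtration layers, using that the Jacquet functor of $\widetilde{\SL}_2(F)$ kills generic principal series and retains only controlled subquotients. Over archimedean $F$ one must additionally manage moderate growth and smoothness in the Nash/tempered framework, and this is precisely where the precise definition of pseudo-algebraic becomes indispensable, being tailored to the ambiguity introduced by the Weil index $\gamma_\psi$ and by the small admissible $\widetilde{\SL}_2(F)$-subquotients supported on the quadric.
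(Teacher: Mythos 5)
The paper itself offers no proof of this statement: it is quoted verbatim as \cite[Theorem 5.1.7]{dima2009duke} and applied as a black box, so there is no in-paper argument for you to be compared against. Your sketch does correctly reconstruct the strategy used in the cited reference: the two closure conditions on $L$ package up exactly as $\SL_2(F)$-invariance under the Weil representation (and since $\dim I_{p,q}=2pq$ is even the metaplectic cover splits, though the Weil index $\gamma_\psi$ still twists the torus formula, which is why the pseudo-algebraic fudge factor $\kappa$ appears), and the eigenvalue $|-|^{\frac{1}{2}\dim I_{p,q}}\kappa^{-1}$ is read off by translating the torus action back to dilations, exactly as you say.

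That said, there are two real gaps. The step you yourself flag as the main obstacle --- that only torus characters of the stated form can occur on an $\SL_2(F)$-stable subspace of $\mathscr{C}_{Z(Q)}(I_{p,q})$ --- is the entire content of the theorem, and invoking ``Jacquet-module computations on the support-filtration layers'' and ``managing moderate growth in the Nash/tempered framework'' names the problem rather than solving it; in the archimedean case in particular the cited paper has to do genuine work to exclude the unwanted constituents. Second, complete reducibility of the $F^\times$-action is not a formal consequence of having a finite filtration whose graded pieces are direct sums of torus eigenlines: one must still split the filtration, i.e.\ rule out nontrivial self-extensions of the eigencharacters occurring, and that again relies on the structure theory in \cite{dima2009duke} rather than following automatically. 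So the approach is the right one and faithful to the shape of the cited proof, but the two hardest claims are asserted rather than argued.
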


Now we are prepared to prove Proposition \ref{fourier}. The basic idea is due to Chen-Sun in \cite{sun2020}.
\begin{proof}[Proof of Proposition \ref{fourier}]
	Denote by $L$ the space of all tempered generalized functions $f$ on $I_{p,q}$ with the properties in Proposition \ref{fourier}. Assume by contradiction that $L$ is nonzero. Then by Lemma \ref{key:lem} and Theorem \ref{duke}, one has
	\[|-|^{tr(2-\mathbf{h})|_{I_{p,q}^\mathbf{f}}}\kappa_1=\eta^2=|-|^{\dim I_{p,q}}\kappa_2^{-2}
 \]
	where $\kappa_1$ and $\kappa_2$ are two pseudo-algebraic characters of $F^\times$.
This finishes the proof.
	%
\end{proof}

\subsection{Proof of Theorem \ref{vanishing:I}} In this subsection, we will give the proof of Theorem \ref{vanishing:I}. We need the following definition and lemmas.

\begin{defn}
	We fix a grading on $\mathfrak{sl}_2(F)$ given by $\mathbf{h}\in\mathfrak{sl}_2(F)_0$ and $\mathbf{e},\mathbf{f}\in\mathfrak{sl}_2(F)_1$ where $\{\mathbf{h},\mathbf{e},\mathbf{f} \}$ is the $\mathfrak{sl}_2$-triple defined in \eqref{sl}. A graded representation of $\mathfrak{sl}_2(F)$ is a representation of $\mathfrak{sl}_2(F)$ on a graded vector space $V=V_0\oplus V_1$ such that
	\[\mathfrak{sl}_2(F)_i(V_j)\subset V_{i+j} \]
	for $i,j\in\mathbb{Z}/2\mathbb{Z}$. Then $V_0$ (resp. $V_1$) is called the even (resp. odd) part of $V$.
\end{defn}

\begin{lem} Every irreducible graded representation of $\mathfrak{sl}_2(F)$ is irreducible (as a usual representation of $\mathfrak{sl}_2(F)$).
\end{lem}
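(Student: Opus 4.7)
The plan is to recast graded irreducibility in terms of twisted intertwiners and then rule out proper nontrivial $\mathfrak{sl}_2(F)$-submodules by a symmetrization argument. First, note that a grading $V = V_0 \oplus V_1$ is equivalent to the datum of the linear involution $\sigma \in \mathrm{GL}(V)$ with $\sigma|_{V_0} = \mathrm{id}$ and $\sigma|_{V_1} = -\mathrm{id}$, which satisfies $\sigma(Xv) = \alpha(X)\sigma(v)$ for all $X \in \mathfrak{sl}_2(F)$ and $v \in V$, where $\alpha$ is the involution of $\mathfrak{sl}_2(F)$ fixing $\mathbf{h}$ and negating $\mathbf{e}$ and $\mathbf{f}$. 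Under this correspondence, graded submodules of $V$ are precisely the $\sigma$-stable $\mathfrak{sl}_2(F)$-submodules, so it suffices to show that if $V$ is irreducible as a graded module, then it admits no proper nonzero $\mathfrak{sl}_2(F)$-submodule at all.

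Assume for contradiction that $W \subset V$ is such a submodule. Rearranging the twisted relation to $X\sigma(v) = \sigma(\alpha(X)v)$ shows that $\sigma(W)$ is again an $\mathfrak{sl}_2(F)$-submodule, and hence $W+\sigma(W)$ and $W\cap\sigma(W)$ are $\sigma$-stable $\mathfrak{sl}_2(F)$-submodules, i.e., graded submodules of $V$. Graded irreducibility then forces $V = W \oplus \sigma(W)$. Applying the same dichotomy to any proper nonzero $\mathfrak{sl}_2(F)$-submodule $W' \subsetneq W$ would give $V = W' \oplus \sigma(W')$ with $W' \subsetneq W$, impossible since this would force $V \subsetneq W \oplus \sigma(W) = V$; hence $W$ is simple.

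The decisive step is to construct a proper nonzero $\sigma$-stable $\mathfrak{sl}_2(F)$-submodule of $V = W \oplus \sigma(W)$. The involution $\alpha$ coincides with conjugation by $D := \mathrm{diag}(1,-1) \in \GL_2(F)$, and the simple module $W$, being absolutely irreducible of the form $\mathrm{Sym}^n(F^2)$, is the restriction of a rational $\GL_2(F)$-representation on which $D$ acts by a linear involution $\tau \in \mathrm{GL}(W)$ satisfying $\tau^2 = \mathrm{id}$ and $\tau(Xw) = \alpha(X)\tau(w)$. Setting $\phi := \sigma \circ \tau: W \to \sigma(W)$, the two twists by $\alpha$ cancel and $\phi$ becomes a genuine $\mathfrak{sl}_2(F)$-isomorphism. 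A short computation using $\tau^2 = \mathrm{id}$ gives $\sigma(w + \phi(w)) = \tau(w) + \phi(\tau(w))$, so the graph $\Gamma_\phi = \{w + \phi(w) : w \in W\}$ is $\sigma$-stable. Since $\Gamma_\phi$ is a proper nonzero $\mathfrak{sl}_2(F)$-submodule (it is isomorphic to $W$ and distinct from both $W$ and $\sigma(W)$), it is a proper nonzero graded submodule of $V$, contradicting graded irreducibility.

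The main obstacle I anticipate is arranging for $\tau$ to square to the identity rather than to a nonzero scalar. Schur's lemma and the fact that $\alpha$ fixes $\mathbf{h}$ only guarantee the existence of some twisted intertwiner with $\tau^2 \in F^\times \cdot \mathrm{id}$; if this scalar happens not to be a square in $F$, one cannot rescale $\tau$ to an involution. The way around this is to integrate $W$ to a rational $\GL_2(F)$-representation and take $\tau$ to be the image of the honest group involution $D$, which automatically yields $\tau^2 = \mathrm{id}$; this is legitimate because every finite-dimensional simple $\mathfrak{sl}_2(F)$-module in characteristic zero is absolutely irreducible of the form $\mathrm{Sym}^n(F^2)$ and therefore globalizes.
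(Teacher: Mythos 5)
Your proof is correct; note, though, that the paper simply states this lemma without proof (delegating the graded $\mathfrak{sl}_2$-module background to Chen--Sun), so there is no in-paper argument to compare against. Every step checks out: the identification of gradings with the involution $\sigma$ satisfying $\sigma(Xv)=\alpha(X)\sigma(v)$ where $\alpha=\mathrm{Ad}(\mathrm{diag}(1,-1))$; the passage from a putative proper nonzero submodule $W$ to $V=W\oplus\sigma(W)$ via graded irreducibility; the reduction to $W$ simple; and the construction of the $\sigma$-stable graph $\Gamma_\phi$ using the involution $\tau$ coming from the rational $\GL_2(F)$-structure on $\mathrm{Sym}^n(F^2)$, which indeed squares to the identity and satisfies the right twisted intertwining relation, so that $\phi=\sigma\tau$ is a genuine $\mathfrak{sl}_2(F)$-isomorphism and $\sigma(w+\phi(w))=\tau(w)+\phi(\tau(w))$.

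That said, the central ingredient you invoke (that $W\cong\mathrm{Sym}^n(F^2)$, so $\mathbf{h}$ acts diagonalizably with integer eigenvalues) already yields the lemma more directly, without the graph construction or the excursion through $\GL_2$: since $\mathbf{h}\in\mathfrak{sl}_2(F)_0$ preserves $V_0$ and $V_1$, each $\mathbf{h}$-eigenspace is graded. Take the top eigenvalue $n$ and a nonzero homogeneous $v\in V_n$; automatically $\mathbf{e}v=0$, and since $\mathbf{e},\mathbf{f}$ are odd the cyclic module $\mathrm{span}\{v,\mathbf{f}v,\mathbf{f}^2v,\dots\}$ is a nonzero graded submodule, hence equals $V$ by graded irreducibility. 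The standard $\mathfrak{sl}_2$ computation then shows this cyclic module is $\mathrm{Sym}^n(F^2)$, which is irreducible as an ungraded module. Your route is a valid and somewhat more structural alternative, but it proves the contrapositive (producing a graded submodule from an ungraded one) at the cost of the globalization step, whereas the highest-weight argument proves irreducibility directly once one knows $\mathbf{h}$ has a homogeneous eigenvector of top weight.
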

Denote by $V_\lambda^\omega$ the irreducible graded representation of $\mathfrak{sl}_2(F)$ with highest weight $\lambda$ and highest weight vector of parity $\omega\in\mathbb{Z}/2\mathbb{Z}$. Let $V=V_0\oplus V_1$ such that $\dim V_0=p$ and $\dim V_1=q=p+1$. Consider $V$ as a graded representation of $\mathfrak{sl}_2(F)$.
\begin{lem}\label{regular}
	If $V=V_0\oplus V_1$ is irreducible as a graded representation of $\mathfrak{sl}_2(F)$, then $\mathbf{e}$ is regular nilpotent, i.e., $\dim \mathcal{O}$ is the biggest dimension among the nilpotent orbits of $I_{p,p+1}$.
\end{lem}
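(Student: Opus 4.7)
The plan is to identify the graded $\mathfrak{sl}_2(F)$-module structure of $V$ completely, read off that $\mathbf{e}$ is regular nilpotent in $\mathfrak{gl}_{2p+1}(F)$, compute $\dim\mathcal{O}=\dim(H_{p,p+1}\cdot\mathbf{e})$ directly, and then match it against the dimension of the full nilpotent cone $\mathcal{N}_{p,p+1}$.

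First I would apply the preceding lemma: irreducibility of $V$ as a graded $\mathfrak{sl}_2(F)$-module implies irreducibility as an ordinary $\mathfrak{sl}_2(F)$-module, so $V\cong V_{2p}^{\omega}$ for some $\omega\in\mathbb{Z}/2\mathbb{Z}$. Since $\mathbf{f}\in\mathfrak{sl}_2(F)_1$ is odd, applying $\mathbf{f}$ to the highest weight vector flips parity at each step, so the weight vectors of $V_{2p}^{\omega}$ at weights $2p,2p-2,\ldots,-2p$ carry parities $\omega,\omega+1,\omega,\ldots,\omega$, producing $p+1$ vectors of parity $\omega$ and $p$ of parity $\omega+1$. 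The constraints $\dim V_0=p$ and $\dim V_1=p+1$ then force $\omega=1$, giving $V\cong V_{2p}^1$.

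Since $V$ is $\mathfrak{sl}_2(F)$-irreducible of dimension $2p+1$, $\mathbf{e}$ acts as a single Jordan block of size $2p+1$ on $V$, i.e., as a regular nilpotent element of $\mathfrak{gl}_{2p+1}(F)$. Its centralizer in $\mathfrak{gl}_{2p+1}(F)$ is the polynomial algebra $F[\mathbf{e}]$, with basis $1,\mathbf{e},\mathbf{e}^2,\ldots,\mathbf{e}^{2p}$. Because $\mathbf{e}$ itself is odd (reversing the $\mathbb{Z}/2$-grading on $V$), the power $\mathbf{e}^k$ preserves the grading precisely when $k$ is even, so $Z_{\mathfrak{h}_{p,p+1}}(\mathbf{e})=F[\mathbf{e}^2]$ is spanned by $\{1,\mathbf{e}^2,\mathbf{e}^4,\ldots,\mathbf{e}^{2p}\}$ and has dimension $p+1$. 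Consequently
\[
\dim\mathcal{O} \;=\; \dim\mathfrak{h}_{p,p+1} - \dim Z_{\mathfrak{h}_{p,p+1}}(\mathbf{e}) \;=\; \bigl(p^2+(p+1)^2\bigr)-(p+1) \;=\; 2p^2+p.
\]

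The main obstacle will be verifying that $2p^2+p$ is in fact the maximal dimension of a nilpotent $H_{p,p+1}$-orbit in $I_{p,p+1}$. My preferred route is to invoke the Kostant--Rallis structure theory for the symmetric pair $(\GL_{2p+1}(F),H_{p,p+1})$: its rank equals $\min(p,p+1)=p$, the $H_{p,p+1}$-invariants on $I_{p,p+1}$ being generated as a polynomial algebra by the $p$ elementary symmetric functions of the eigenvalues of $xy\in Mat_{p,p}(F)$. This yields $\dim\mathcal{N}_{p,p+1}=\dim I_{p,p+1}-p=2p(p+1)-p=2p^2+p$. Matching this with the orbit dimension computed above shows that $\mathcal{O}$ is open and dense in $\mathcal{N}_{p,p+1}$, so it is the unique nilpotent $H_{p,p+1}$-orbit of maximal dimension, which is what the lemma asserts.
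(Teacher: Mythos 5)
The paper states this lemma without proof, so there is nothing to compare against; your argument is correct and self-contained. The chain you give is exactly what one would expect: irreducibility as a graded $\mathfrak{sl}_2(F)$-module reduces (via the preceding lemma) to ordinary irreducibility, forcing $V\cong V_{2p}^{1}$ (the parity bookkeeping on the weight string pins down $\omega=1$); then $\mathbf{e}$ acts as a single Jordan block, its full centralizer is $F[\mathbf{e}]$ of dimension $2p+1$, and since $\theta_{p,p+1}(\mathbf{e}^k)=(-1)^k\mathbf{e}^k$, only even powers survive in $\mathfrak{h}_{p,p+1}$, giving a $(p+1)$-dimensional stabilizer Lie algebra and hence $\dim\mathcal{O}=p^2+(p+1)^2-(p+1)=2p^2+p$. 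The final step, comparing with $\dim\mathcal{N}_{p,p+1}=\dim I_{p,p+1}-\mathrm{rank}=2p(p+1)-p=2p^2+p$ from Kostant--Rallis, shows $\mathcal{O}$ is dense in the nilcone. The only thing worth noting is that the Kostant--Rallis dimension formula is an algebro-geometric statement (typically phrased over an algebraically closed field), but the orbit dimension is computed via the Lie algebra of the stabilizer, which is unchanged under base extension, so the conclusion is valid over any characteristic-zero $F$.
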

 In general, there is a decomposition of $\mathfrak{sl}_2(F)$-graded modules 
\[V=V_{\lambda_1}^{\omega_1}\oplus V_{\lambda_2}^{\omega_2}\oplus\cdots\oplus V_{\lambda_d}^{\omega_d} \]
		for $d\geq1$. (See \cite{sun2020}.) There is an isomorphism
\[I_{p,q}=\Hom(V_0,V_1)\oplus\Hom(V_1,V_0)\cong\Hom(V_0\oplus V_1,V_0\oplus V_1)_1 \]
of $F$-vector spaces,
where $\Hom(V,V)_1$ is the odd part of $\Hom(V,V)$ as a graded $\mathfrak{sl}_2(F)$-module.

\begin{lem}\cite[Lemma 3.1]{sun2020}
Let $$m_{i,j}:=tr(2-\mathbf{h})|_{\Hom(V_{\lambda_i}^{\omega_i},V_{\lambda_j}^{\omega_j})_1^\mathbf{f}}+tr(2-\mathbf{h})|_{\Hom(V_{\lambda_j}^{\omega_j},V_{\lambda_i}^{\omega_i})_1^\mathbf{f} }-\dim V_{\lambda_i}^{\omega_i}\dim V_{\lambda_j}^{\omega_j}$$
for $i,j\in\{1,2,\cdots,d\}$. Then $$tr(2-\mathbf{h})|_{I_{p,q}^\mathbf{f}}-\dim I_{p,q}=\frac{1}{2}\sum_{\substack{1\leq i\leq d\\ 
1\leq j\leq d} }m_{i,j}+\frac{1}{2}(p-q)^2 $$ and
\[m_{i,j}=\begin{cases}
\min\{\lambda_i,\lambda_j \}+1,&\mbox{if }\lambda_i\not\equiv\lambda_j\pmod{2};\\
2\min\{\lambda_i,\lambda_j \}+2,&\mbox{if }\lambda_i\equiv \lambda_j\equiv1\pmod{2}\mbox{ and }\omega_i=\omega_j;\\
0,&\mbox{if }\lambda_i\equiv\lambda_j\equiv1\pmod{2}\mbox{ and }\omega_i\neq\omega_j;\\
-|\lambda_i-\lambda_j|-1,&\mbox{if }\lambda_i\equiv\lambda_j\equiv0\pmod{2}\mbox{ and }\omega_i=\omega_j;
\\
\lambda_i+\lambda_j+3,&\mbox{if }\lambda_i\equiv\lambda_j\equiv0\pmod{2}\mbox{ and }\omega_i\neq\omega_j.
\end{cases} \]\label{mij}
\end{lem}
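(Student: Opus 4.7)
The plan is to compute each trace on $\Hom(V_{\lambda_i}^{\omega_i},V_{\lambda_j}^{\omega_j})_1^{\mathbf{f}}$ by decomposing the Hom space into graded $\mathfrak{sl}_2(F)$-irreducibles, extracting the contribution of each summand to the odd part's $\mathbf{f}$-kernel, and then carrying out a five-case parity bookkeeping.

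First I would establish that $(V_\lambda^\omega)^\vee\cong V_\lambda^{\omega+\lambda}$ as graded $\mathfrak{sl}_2(F)$-modules: dualization preserves both weights and parities vector-by-vector, while the lowest-weight vector of $V_\lambda^\omega$ (of weight $-\lambda$ and parity $\omega+\lambda$) corresponds under duality to the highest-weight vector of the dual. A graded Clebsch--Gordan decomposition
\[
V_a^\alpha\otimes V_b^\beta\;\cong\;\bigoplus_{k=0}^{\min(a,b)}V_{a+b-2k}^{\alpha+\beta+k}
\]
then follows from the usual construction: the weight-$(a+b-2k)$ subspace of $V_a\otimes V_b$ is spanned by the tensors $\mathbf{f}^iv_a^+\otimes\mathbf{f}^{k-i}v_b^+$, each of parity $\alpha+\beta+k$. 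Combining the two,
\[
\Hom(V_{\lambda_i}^{\omega_i},V_{\lambda_j}^{\omega_j})\;\cong\;\bigoplus_{k=0}^{\min(\lambda_i,\lambda_j)}V_{\lambda_i+\lambda_j-2k}^{\omega_i+\omega_j+\lambda_i+k}.
\]

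Second, for any graded summand $V_\mu^\omega$, the space $(V_\mu^\omega)_1\cap\ker\mathbf{f}$ is one-dimensional (spanned by the lowest-weight vector, of weight $-\mu$) if $\mu+\omega$ is odd, and zero otherwise; in the nonzero case $(2-\mathbf{h})$ acts by the scalar $\mu+2$. Applied to the decomposition above this yields
\[
tr(2-\mathbf{h})|_{\Hom(V_{\lambda_i}^{\omega_i},V_{\lambda_j}^{\omega_j})_1^{\mathbf{f}}}=\sum_{k\in S_{ij}}(\lambda_i+\lambda_j-2k+2),
\]
where $S_{ij}=\{\,0\le k\le\min(\lambda_i,\lambda_j):k\equiv\omega_i+\omega_j+\lambda_j+1\pmod{2}\,\}$, and the other Hom produces the analogous sum indexed by $S_{ji}$ (swap $\lambda_i\leftrightarrow\lambda_j$).

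Finally I would split into the stated cases, taking WLOG $\lambda_i\le\lambda_j$. When $\lambda_i\not\equiv\lambda_j\pmod{2}$, the parity conditions defining $S_{ij}$ and $S_{ji}$ are opposite, so they jointly partition $\{0,\ldots,\lambda_i\}$ and the combined sum collapses to $\sum_{k=0}^{\lambda_i}(\lambda_i+\lambda_j+2-2k)=(\lambda_i+1)(\lambda_j+2)$; subtracting $(\lambda_i+1)(\lambda_j+1)$ gives $\min(\lambda_i,\lambda_j)+1$. In each of the remaining four cases $S_{ij}=S_{ji}$ consists uniformly of the even-$k$ or odd-$k$ indices in $\{0,\ldots,\lambda_i\}$, determined by the parities of $\lambda_i,\lambda_j,\omega_i,\omega_j$; each such combined sum reduces to a short arithmetic progression whose value, after subtracting $(\lambda_i+1)(\lambda_j+1)$, matches the stated formula. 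The main obstacle is purely bookkeeping---carrying the parity condition through the Clebsch--Gordan summands and evaluating the arithmetic-progression sums in each sub-case---rather than any conceptual difficulty beyond the graded Clebsch--Gordan decomposition and the description of $\ker\mathbf{f}$ on the odd part.
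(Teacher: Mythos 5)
The paper itself cites this as \cite[Lemma 3.1]{sun2020} and supplies no proof, so there is no in-paper argument to compare against; your proposal is an independent reconstruction. Your computation of the five-case formula for $m_{i,j}$ is correct and uses the natural approach: $(V_\lambda^\omega)^\vee\cong V_\lambda^{\omega+\lambda}$, the graded Clebsch--Gordan decomposition $V_a^\alpha\otimes V_b^\beta\cong\bigoplus_{k=0}^{\min(a,b)}V_{a+b-2k}^{\alpha+\beta+k}$, and the observation that the lowest-weight vector of $V_\mu^\omega$ (weight $-\mu$, parity $\omega+\mu$) is the sole contributor to $\ker\mathbf{f}$ and lies in the odd part precisely when $\omega+\mu$ is odd, where $2-\mathbf{h}$ acts by $\mu+2$. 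Reducing the parity condition $\lambda_i+\lambda_j-2k+\omega_i+\omega_j+\lambda_i+k\equiv 1$ to $k\equiv\omega_i+\omega_j+\lambda_j+1\pmod 2$, and then evaluating the resulting arithmetic progressions in each of the five sub-cases and subtracting $(\lambda_i+1)(\lambda_j+1)$, does indeed produce the stated values. I spot-checked all five cases and they come out right.

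The one genuine gap is that you never address the first assertion of the lemma, namely
\[
tr(2-\mathbf{h})|_{I_{p,q}^{\mathbf{f}}}-\dim I_{p,q}=\frac{1}{2}\sum_{i,j}m_{i,j}+\frac{1}{2}(p-q)^2 .
\]
This is short but should be said explicitly: writing $I_{p,q}^{\mathbf{f}}=\bigoplus_{i,j}\Hom(V_{\lambda_i}^{\omega_i},V_{\lambda_j}^{\omega_j})_1^{\mathbf{f}}$ and symmetrizing over $(i,j)\leftrightarrow(j,i)$ gives
$tr(2-\mathbf{h})|_{I_{p,q}^{\mathbf{f}}}=\tfrac12\sum_{i,j}\bigl(m_{i,j}+\dim V_{\lambda_i}^{\omega_i}\dim V_{\lambda_j}^{\omega_j}\bigr)=\tfrac12\sum_{i,j}m_{i,j}+\tfrac12(p+q)^2$,
and subtracting $\dim I_{p,q}=2pq$ yields $\tfrac12(p+q)^2-2pq=\tfrac12(p-q)^2$. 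With that paragraph added, your argument is a complete proof of the lemma.
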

\begin{prop}\label{observation}If $tr(2-\mathbf{h})|_{I_{p,p+1}^\mathbf{f}}=2p(p+1)$, 
	 then there exists an $h\in H_{p,p+1}$ such that $\sigma\cdot\mathbf{e}=h\mathbf{e}h^{-1}$.
\end{prop}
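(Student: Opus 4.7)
The plan is to use Lemma~\ref{mij} to translate the hypothesis into a constraint on the graded $\mathfrak{sl}_2$-decomposition
\[
V = V_{\lambda_1}^{\omega_1}\oplus\cdots\oplus V_{\lambda_d}^{\omega_d},
\]
show that this constraint forces $d=1$ and $V\cong V_{2p}^1$, and then deduce the conclusion from the uniqueness of the $H_{p,p+1}$-orbit of regular nilpotent elements in $I_{p,p+1}$.

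Since $\dim I_{p,p+1}=2p(p+1)$ and $(p-q)^2=1$, Lemma~\ref{mij} shows the hypothesis is equivalent to $\sum_{i,j}m_{i,j}=-1$. Writing $a_i$ and $b_i$ for the dimensions of the even and odd parts of $V_{\lambda_i}^{\omega_i}$, one checks that $b_i-a_i=0$ when $\lambda_i$ is odd, while $b_i-a_i=+1$ (resp. $-1$) when $\lambda_i$ is even with $\omega_i=1$ (resp. $\omega_i=0$); the graded dimension identity $\dim V_1-\dim V_0=1$ therefore forces $r_1-r_0=1$, where $r_\epsilon$ counts the indices with $\lambda_i$ even and $\omega_i=\epsilon$. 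I would then split $\sum_{i,j}m_{i,j}$ into the odd-$\lambda$ diagonal contributions $2(\lambda_i+1)$, the contributions from pairs involving at least one odd $\lambda$ (all nonnegative by Lemma~\ref{mij}), and the even-even contributions. Setting $k=r_0$ so that $r_1=k+1$, and using the elementary bound $\sum_{i\neq j\in E_\epsilon}|\lambda_i-\lambda_j|\leq 2(r_\epsilon-1)\sum_{i\in E_\epsilon}\lambda_i$ to control the negative part, a direct computation shows that $k\geq 1$ already yields $\sum_{i,j}m_{i,j}>-1$; for $k=0$ the equality $\sum_{i,j}m_{i,j}=-1$ furthermore forces the absence of odd-$\lambda$ components, leaving the single possibility $V\cong V_{2p}^1$.

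Once $V\cong V_{2p}^1$, Lemma~\ref{regular} implies that $\mathbf{e}$ is regular nilpotent in $\mathfrak{gl}_{2p+1}(F)$. Its transpose $\mathbf{e}^t$ remains regular nilpotent and, since transposition interchanges $Mat_{p,p+1}(F)$ with $Mat_{p+1,p}(F)$, still lies in $I_{p,p+1}$. Extending $\mathbf{e}^t$ to a graded $\mathfrak{sl}_2$-triple via the Kostant--Rallis variant of Jacobson--Morozov yields another graded $\mathfrak{sl}_2$-module structure on $V$ which is irreducible as a plain $\mathfrak{sl}_2$-module (because $\mathbf{e}^t$ has a single Jordan block of size $2p+1$), and hence is forced to be $V_{2p}^1$ by the graded dimension count. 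Consequently the graded $\mathfrak{sl}_2$-modules defined by $\mathbf{e}$ and $\mathbf{e}^t$ are isomorphic, and the Kostant--Rallis bijection between $H_{p,p+1}$-orbits of nilpotent elements in $I_{p,p+1}$ and isomorphism classes of graded $\mathfrak{sl}_2$-modules with prescribed even and odd dimensions produces the required $h\in H_{p,p+1}$ with $h\mathbf{e}h^{-1}=\mathbf{e}^t=\sigma\cdot\mathbf{e}$. The main obstacle is the bookkeeping in the second paragraph, where one must rule out every configuration permitted by the five-case formula for $m_{i,j}$ simultaneously, subject to the dimension constraint $r_1-r_0=1$.
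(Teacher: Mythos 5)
Your proposal is correct and follows essentially the same strategy as the paper: translate the hypothesis into $\sum_{i,j}m_{i,j}=-1$ via Lemma~\ref{mij}, use the parity bookkeeping $r_1-r_0=1$ to show that only the irreducible case $V\cong V_{2p}^1$ survives, and then invoke Lemma~\ref{regular} together with the Kostant--Rallis uniqueness of the regular nilpotent $H_{p,p+1}$-orbit. The only meaningful difference is in the middle step: the paper reorders the even-weight components and carries out an exact simplification of the even--even part to $4t(t+1)+4\sum_i(\lambda_i+\lambda_{t+1+i})i$, whereas you replace this by the crude but sufficient bound $\sum_{i\neq j}|\lambda_i-\lambda_j|\le 2(r_\epsilon-1)\sum\lambda_i$. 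Your version also spells out more explicitly than the paper does why odd-weight components can be discarded (their diagonal terms $m_{i,i}=2\lambda_i+2$ are strictly positive, so at $t=0$ any odd summand pushes $\sum m_{i,j}$ above $-1$); this is implicit but not stated in the paper's displayed computation, which only tracks even-weight indices. The final step, observing that $\mathbf{e}^t$ is again a regular nilpotent element of $I_{p,p+1}$ and hence $H_{p,p+1}$-conjugate to $\mathbf{e}$, is logically a bit more streamlined than the paper's presentation (which first exhibits an explicit conjugating element for a chosen representative), but it is the same Kostant--Rallis input.
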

\begin{proof} 	Suppose $V=\oplus_{i=1}^d V_{\lambda_i}^{\omega_{i}}$ with $d\geq 1$. If $\lambda_i$ is odd, then
	$\dim V_{\lambda_i}^{\omega_{i}}\cap V_0=\dim V_{\lambda_i}^{\omega_{i}}\cap V_1$. If $\lambda_i$ is even, then $\dim V_{\lambda_i}^{\omega_{i}}\cap V_0=\dim V_{\lambda_i}^{\omega_{i}}\cap V_1+(-1)^{\omega_{i}}$. Since $\dim V_1=\dim V_0+1$, we obtain that the number of indices $i$
	such that $\lambda_i$ is even and $\omega_{i}=1$ minutes the number of indices $i$ such that $\lambda_i$ is even and $\omega_{i}=0$ equals  $1$. Denote by $t$ the number of indices $i$ such that
	$\lambda_i$ is even and $\omega_{i}=0$.
	Assume that $tr(2-\mathbf{h})|_{I_{p,p+1}^\mathbf{f}}=2p(p+1)$.
	It is easy to see that $\mathbf{e}=\begin{pmatrix}
	0&\mathbf{1}_p&0\\ 0&0&0\\ \mathbf{1}_p&0&0
	\end{pmatrix}$ and $h=\begin{pmatrix}
	\omega_p&\\& \omega_{p+1}
	\end{pmatrix}$ where $\omega_1=\begin{pmatrix}
	1
	\end{pmatrix}$ is the $1\times 1$ matrix and $\omega_{i+1}:=\begin{pmatrix}
	0&1\\\omega_{i}&0
	\end{pmatrix}$ for $i=1,2,\cdots, p$. In this case, $V$ is irreducible as a $\mathfrak{sl}_2(F)$-graded representation.
	\par
	 In general, if $V$ is reducible, then $(q-p)^2+\sum_{\substack{1\leq i\leq d\\ 1\leq j\leq d}} m_{i,j}>0$. The following proof is similar to \cite[Lemma 7.7.5]{dima2009duke}.
	 Reorder the space $V_{\lambda_i}^{\omega_{i}}$ so that $\omega_{i}=0$ for $1\leq i\leq t$
 and $\omega_{i}=1$ for $i>t$. Furthermore, we require that $\lambda_1\geq\lambda_2\geq \cdots\geq \lambda_t$ and $\lambda_{t+1}\geq\lambda_{t+2}\geq\cdots\geq\lambda_{2t+1}=\lambda_d$. Then
 \begin{equation*}
 \begin{split}(q-p)^2+
 \sum_{\substack{1\leq i\leq d\\ 1\leq j\leq d}}m_{i,j}=&1+\sum_{\substack{1\leq i\leq t\\ 1\leq j\leq t}}(-|\lambda_i-\lambda_j|-1)+\sum_{\substack{t+1\leq i\leq 2t+1\\ 1\leq j\leq t}} (\lambda_i+\lambda_j+3)\\
& +\sum_{\substack{1\leq i\leq t\\ t+1\leq j\leq 2t+1}}(\lambda_i+\lambda_j+3)+\sum_{\substack{t+1\leq i\leq 2t+1\\ t+1\leq j\leq 2t+1}}(-|\lambda_i-\lambda_j|-1)\\
=&4t(t+1)-\sum_{\substack{1\leq i\leq t\\ 1\leq j\leq t}}|\lambda_i-\lambda_j|+2\sum_{\substack{t+1\leq i\leq 2t+1\\ 1\leq j\leq t}} (\lambda_i+\lambda_j)
 -\sum_{\substack{t+1\leq i\leq 2t+1\\ t+1\leq j\leq 2t+1}}|\lambda_i-\lambda_j| \\
 =&4t(t+1)+4\sum_{i=1}^t(\lambda_i+\lambda_{t+1+i}) i
 \end{split}
 \end{equation*}
 which is positive unless $t=0$. 
 \par
 If there is an another $\mathfrak{sl}_2(F)$-triple $\{\mathbf{h}',\mathbf{e}',\mathbf{f}' \}$ such that $V$ is irreducible, then Lemma \ref{regular} implies that both $\mathbf{e}$ and $\mathbf{e}' $ are regular nilpotent and so they are $H_{p,p+1}$-conjugate due to Kostant-Rallis' result that the regular nilpotent elements are in the same $H_{p,p+1}$-orbit (see \cite[Theorem 6]{kostant}).
	This finishes the proof.
\end{proof}
\begin{rem}\label{rem:fail}
	The above proposition does not hold for general $p$ and $q$. For instance, let $(p,q)=(6,8)$. There does exist a unipotent element $\mathbf{e}=(x,y)$ such that 
	\[\sum_{\substack{1\leq i\leq d\\ 1\leq j\leq d}}m_{i,j}+4=0 \]
	where $d=3, V=V_1^1\oplus V_2^1\oplus V_8^1$ is the decomposition of $\mathfrak{sl}_2(F)$-graded modules, $rank(x)=6$ and $rank(y)=5$. Therefore $\sigma\cdot\mathbf{e}\notin H_{6,8}\mathbf{e}$.
\end{rem}

Finally, we can give a proof of Theorem \ref{vanishing:I}. 
\begin{proof}[Proof of Theorem \ref{vanishing:I}] 
	It suffices to show that ${\mathscr{C}}_{\mathcal{N}_{p,q}}(I_{p,q})^{\tilde{H}_{p,q},\chi}=0$. 
		Due to Proposition \ref{fourier},  assume that $tr(2-\mathbf{h})|_{I_{p,q}^\mathbf{f}}=2pq$. 
	Suppose that $f\in \mathscr{C}_{\mathcal{O}}(I_{p,q})^{\tilde{H}_{p,q},\chi}$
	is a tempered generalized function on $I_{p,q}$ supported on the orbit $\mathcal{O}=H_{p,q}\mathbf{e}\subset\mathcal{N}_{p,q}$. Then its Fourier transform $\mathfrak{F}(f)$ is supported on $\mathcal{O}$ due to Lemma \ref{lem:intertwin}. Thanks to Proposition \ref{observation}
	\[ \sigma\cdot\mathbf{e}\in\mathcal{O}, \]
	 it implies that $f=0$ which means that every element in ${\mathscr{C}}_{\mathcal{N}_{p,q}}(I_{p,q})^{\tilde{H}_{p,q},\chi}$ is zero, as required.
	 
	 If $F$ is archimedean  and $\kappa_1\kappa_2^2=|-|^{2pq- tr(2-\mathbf{h})|_{I_{p,q}^{\mathbf{f}}}},$ then $\kappa_1=\kappa_2=\mathbf{1}$ and $tr(2-\mathbf{h})|_{I_{p,q}^\mathbf{f}}=2pq$.  Otherwise, it contradicts Lemma \ref{mij}. However, $\sigma\cdot\mathbf{e}\in H_{p,q}\mathbf{e}$ in this case. Thus $f=0$. This finishes the proof.
\end{proof}

\section{Proof of Theorem \ref{thm:A}} Recall that $n=p+q$ and $q=p+1$.
Let $\mathcal{H}_{p,q}:=H_{p,q}\times H_{p,q}$ be a  reductive group. Define
\[\tilde{\mathcal{H}}_{p,q}:=\mathcal{H}_{p,q}\rtimes\langle\sigma\rangle \]
where $\sigma$
acts on $\mathcal{H}_{p,q}$ by the involution $(h_1,h_2)\mapsto ((h_2^{-1})^t,(h_1^{-1})^t)$. Let $\tilde{\mathcal{H}}_{p,q}$ act on $\GL_n(F)$ by
\[(h_1,h_2)\cdot g=h_1gh_2^{-1} \]
and $\sigma\cdot g=g^t$ for $h_i\in H_{p,q}$ and $g\in\GL_n(F)$.
Let $\chi$ be the sign character of $\tilde{\mathcal{H}}_{p,q}$. Let $\mu\otimes\mu^{-1}$ be a character of $\mathcal{H}_{p,q}$.
 Let $\tilde{\mu}$
be a character of $\tilde{\mathcal{H}}_{p,q}$ twisted by the sign character, i.e.
$\tilde{\mu}|_{\mathcal{H}_{p,q}}=\mu\otimes\mu^{-1}$ and $\tilde{\mu}(\sigma)=-1$.

This section is devoted to  a proof of the following theorem.
\begin{thm}
	\label{thm:dist} Assume that $\mu$ is a good character of $H_{p,q}$. We have
	\[{\mathscr{C}}(\GL_n(F))^{\tilde{\mathcal{H}}_{p,q},\tilde{\mu}}=0. \]
\end{thm}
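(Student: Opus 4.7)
The plan is to apply the Aizenbud--Gourevitch descent (Theorem \ref{thm2.1}) to the action of $\tilde{\mathcal{H}}_{p,q}$ on $\GL_n(F)$, reducing the statement on the group to a vanishing statement for equivariant tempered generalized functions on normal spaces at closed $\mathcal{H}_{p,q}$-orbits, and then invoking Theorem \ref{vanishing:I} (together with the analogous $(p,p)$ result of Chen--Sun \cite{sun2020}) at the linearized level. This is exactly the route used in \cite{sun2020} for the pair $(\GL_{2p}(F),H_{p,p})$, and the goal is to adapt it to the asymmetric pair $(\GL_{2p+1}(F),H_{p,p+1})$.

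First I would classify the closed $\mathcal{H}_{p,q}$-orbits on $\GL_n(F)$ using the symmetric-pair formalism: under the map $g\mapsto g\cdot\theta_{p,q}(g)^{-1}$, the double coset space $H_{p,q}\backslash\GL_n(F)/H_{p,q}$ corresponds to $H_{p,q}$-orbits on the $\theta_{p,q}$-anti-fixed variety, and the closed orbits are parametrized by semisimple data. At a closed orbit with base point $g_0$, the stabilizer $\{(h_1,h_2)\in\mathcal{H}_{p,q}:h_1g_0=g_0h_2\}$ is conjugate to a diagonally embedded product $\prod_i H_{p_i,q_i}$ with $\sum_i p_i=p$ and $\sum_i q_i=q$, and the normal space $N^{\GL_n(F)}_{\mathcal{H}_{p,q}g_0,g_0}$ decomposes under this stabilizer into blocks of the form $I_{p_i,q_j}$. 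I would then apply Theorem \ref{thm2.1} with $K=\tilde{\mathcal{H}}_{p,q}$ and $\chi=\tilde\mu$. At the identity orbit, the normal space is precisely $I_{p,q}$ with the diagonal $\tilde H_{p,q}$-action and the restriction of $\tilde\mu$ matching the character appearing in Theorem \ref{vanishing:I}; thus the nilpotent-cone contribution vanishes. At non-identity closed orbits the normal space splits into smaller blocks: $I_{p',p'}$-type blocks are dispatched by Chen--Sun \cite{sun2020}, while any block of type $I_{p',p'+1}$ (by a parity count forced by $q-p=1$, at most one such arises) is handled again by Theorem \ref{vanishing:I}. Outside the nilpotent cone, an $\mathfrak{sl}_2$-triple Fourier-homogeneity argument in the spirit of Proposition \ref{fourier} uses the good-character hypothesis on $\mu_F$ to rule out any nontrivial equivariant tempered generalized function.

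The main obstacle will be the inductive bookkeeping. One must check that the good-character hypothesis on $\mu$ restricts to good characters on each smaller symmetric stabilizer $H_{p_i,q_i}$ so that both the Chen--Sun vanishing and Theorem \ref{vanishing:I} are available at each block, and one must verify that in every closed-orbit scenario the normal-space decomposition really does consist exclusively of $I_{p',p'}$- and $I_{p',p'+1}$-type factors, with exactly one of the latter forced by the discrepancy $q-p=1$. Organizing this descent cleanly, while tracking how $\tilde\mu$ behaves under restriction to each stabilizer and how the sign twist at $\sigma$ interacts with the block decomposition, is the technical heart of the argument; once it is set up, Theorem \ref{thm:dist} follows by the same descent scheme that \cite{sun2020} employs in the $p=q$ case.
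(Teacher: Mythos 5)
Your descent scheme---apply Theorem~\ref{thm2.1}, classify closed $\mathcal{H}_{p,q}$-orbits, analyze normal spaces, invoke Theorem~\ref{vanishing:I} and the Chen--Sun results at each block---is the right skeleton and matches the paper. But the claimed classification of normal-space blocks is wrong, and that is not a bookkeeping detail but a missing ingredient. You assert that at every closed orbit the normal space decomposes \emph{exclusively} into factors of type $I_{p',p'}$ and $I_{p',p'+1}$, with stabilizer a product of $H_{p_i,q_i}$'s. That is false. The closed $H_{p,q}\times H_{p,q}$-orbits on $\GL_n(F)$ are parametrized (Lemma~\ref{Jac:lem}) not only by the integer $k$ but also by a semisimple block $A\in Mat_{\nu,\nu}(F)$ without eigenvalues $\pm1$. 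When $\nu\geq 1$, the eigenspaces of $A$ and $A^{-1}$ are swapped by $\theta_{p,q}$, so the stabilizer acquires a $\GL_\nu(F)$-factor (not an $H_{\nu',\nu''}$-factor) and the normal space acquires a summand $Mat_{\nu,\nu}(F)$ on which $\GL_\nu(F)$ acts by conjugation and $\sigma$ by transpose. That block is outside the reach of Theorem~\ref{vanishing:I} and of the $H_{p',p'}$-vanishing of Chen--Sun; the paper dispatches it with the separate input $\mathscr{C}(Mat_{\nu,\nu}(F))^{\GL_\nu(F)\rtimes\langle\sigma\rangle,\chi}=0$ (\cite[Theorem~D]{sun2020}). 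Without noticing these conjugation blocks, your induction does not close.

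A secondary imprecision: you describe the ``outside the nilpotent cone'' step as a separate $\mathfrak{sl}_2$/Fourier-homogeneity argument invoking the good-character hypothesis. In fact Theorem~\ref{thm2.1} is formulated exactly so that one only needs the implication $\mathscr{C}(R)=0\Rightarrow\mathscr{C}(Q)=0$, which is equivalent to vanishing of equivariant generalized functions supported on the nilpotent cone of the normal space; there is no independent ``outside the cone'' step. The good-character hypothesis enters precisely at the $I_{k,k}$-blocks, where the character $\widetilde{\mu_k\otimes\mu_k^{-1}}$ is nontrivial and one invokes \cite[Proposition~3.9]{sun2020}; it plays no role in the $I_{p-k-\nu,q-k-\nu}$-block (Theorem~\ref{vanishing:I} needs no character hypothesis there since $\tilde\mu$ restricts to the sign character) nor in the $Mat_{\nu,\nu}(F)$-block. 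Your parity observation that at most one $I_{p',p'+1}$-block occurs, forced by $q-p=1$, is correct but does not rescue the missing $Mat_{\nu,\nu}(F)$ factors.
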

Then Theorem \ref{thm:A} will follow from Theorem \ref{thm:dist} immediately.

Suppose that 
\begin{equation}\label{x_k}
x_{p,k}=\begin{pmatrix}
&&\mathbf{1}_k\\& \mathbf{1}_{p-k}\\\mathbf{1}_k\\&&&\mathbf{1}_{p+1-k}
\end{pmatrix}
\end{equation} $k=0,1,\cdots,p$. Then the orbits
$\tilde{\mathcal{H}}_{p,q} x_{p,k}$ are closed orbits in $\GL_n(F)$ (see \cite[Proposition 4.1]{jacquet1996linear}). 
\begin{lem}\label{Jac:lem}
	\cite[proposition 4.1]{jacquet1996linear} The following double cosets
	$${H}_{p,p+1}\begin{pmatrix}
	g_{11}&0&g_{12}\\0&x_{p-\nu,k}&0\\ g_{21}&0&g_{22}
	\end{pmatrix}H_{p,p+1}$$
	exhaust all closed orbits 
	 in $H_{p,p+1}\backslash \GL_{2p+1}(F)/H_{p,p+1}$, where $x_{p-\nu,k}$ (for $\nu=0,1,\cdots,p-k$) is defined in \eqref{x_k}, $g=\begin{pmatrix}
	 g_{11}&g_{12}\\g_{21}&g_{22}
	 \end{pmatrix}$ satisfies
	$$g\begin{pmatrix}
	\mathbf{1}_\nu\\&-\mathbf{1}_\nu
	\end{pmatrix}g^{-1}
	\begin{pmatrix}
	\mathbf{1}_\nu\\&-\mathbf{1}_\nu
	\end{pmatrix}=\begin{pmatrix}
	A&\mathbf{1}_\nu\\ A^2-\mathbf{1}_\nu&A
	\end{pmatrix}$$  and $A\in Mat_{\nu,\nu}(F)$ is a semisimple matrix without eigenvalues $\pm1$.
\end{lem}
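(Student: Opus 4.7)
The plan is to identify the closed double cosets with a conjugacy-class problem on a symmetric variety. Set $\omega = \omega_{p,p+1}$ and define the symmetrization map $\Phi : \GL_{2p+1}(F) \to \GL_{2p+1}(F)$ by $\Phi(g) = g\omega g^{-1}\omega$. A short calculation, using that every $h \in H_{p,p+1}$ commutes with $\omega$, shows $\Phi(h_1 g h_2) = h_1 \Phi(g) h_1^{-1}$, so $\Phi$ descends to a bijection
\[
\bar\Phi : H_{p,p+1} \backslash \GL_{2p+1}(F) / H_{p,p+1} \longrightarrow S / H_{p,p+1},
\]
where $S := \{x \in \GL_{2p+1}(F) : \omega x \omega = x^{-1}\}$ carries the conjugation action of $H_{p,p+1}$. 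By the standard principle for symmetric varieties (a consequence of Luna's slice theorem), a double coset is closed iff the corresponding element of $S$ is semisimple in $\GL_{2p+1}(F)$. The task thus becomes: enumerate $H_{p,p+1}$-orbits of semisimple elements of $S$.

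For $x \in S$, the substitution $y := x\omega$ satisfies $y^2 = x\omega x \omega = x \cdot x^{-1} = 1$, so $y$ is an involution. The $\pm 1$-eigenspace decomposition $F^{2p+1} = V^+ \oplus V^-$ of $y$, together with the splitting $F^{2p+1} = F^p \oplus F^{p+1}$ coming from $\omega$, furnishes the numerical invariants: the integer $k$ encodes $\dim(V^+ \cap F^p)$ after suitable normalisation, while the integer $\nu$ counts the dimension of the generic-eigenvalue part of $x$ (those eigenvalues outside $\{\pm 1\}$). The $\pm 1$-eigenspace contribution of $x$ can be conjugated by $H_{p,p+1}$ into the standard middle block $x_{p-\nu,k}$ from \eqref{x_k}, while the remaining eigenspaces pair up into a $2\nu \times 2\nu$ corner; the semisimple matrix $A$ in the statement parameterises a Cartan subspace of that corner, and the identity $g\omega g^{-1}\omega = \begin{pmatrix} A & \mathbf{1}_\nu \\ A^2 - \mathbf{1}_\nu & A \end{pmatrix}$ falls out of the condition $\Phi(g) \in S$ restricted to this block.

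The main obstacle is the normal-form step: verifying that every semisimple $H_{p,p+1}$-orbit in $S$ admits a representative of the advertised shape, and that distinct data $(\nu, k, A)$ yield genuinely distinct closed orbits. I would handle this by an inductive peeling-off argument, first isolating the $\pm 1$-eigenspace contribution to secure the middle block $x_{p-\nu,k}$, then analysing the residual $\GL_\nu \times \GL_\nu$-action on the generic corner, which should factor through simultaneous $\GL_\nu$-conjugation of $A$. This is precisely the computation carried out in Jacquet--Rallis \cite{jacquet1996linear}; the strategy I would follow mirrors theirs, via Cartan subspaces of $S$ and the corresponding little Weyl group.
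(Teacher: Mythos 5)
The paper does not actually prove this lemma: the stated proof is a one-line citation to Carmeli's thesis (Theorem 4.13 of \cite{carmeli2015stability}), and the statement itself is attributed to Jacquet--Rallis \cite[Prop.~4.1]{jacquet1996linear}. Your sketch is the standard symmetrization argument that underlies both references, so in substance you are reconstructing the proof the paper defers to rather than taking a genuinely different route. The strategy is sound. Two points, however, are glossed over and are precisely the places where the cited references do nontrivial work. First, the map $\Phi(g)=g\omega g^{-1}\omega$ is not surjective onto all of $S=\{x:\omega x\omega = x^{-1}\}$; its image is the subset of $x$ for which $x\omega$ is an involution $\GL_{2p+1}(F)$-conjugate to $\omega$ (equivalently, with $+1$-eigenspace of dimension $p$). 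That $\Phi$ descends to a bijection from $H_{p,p+1}\backslash\GL_{2p+1}(F)/H_{p,p+1}$ onto $H_{p,p+1}$-orbits on this subset uses the vanishing of $H^1(F,H_{p,p+1})$, which holds here since $H_{p,p+1}$ is a product of general linear groups; this should be made explicit. Second, the criterion that $H_{p,p+1}gH_{p,p+1}$ is closed in $\GL_{2p+1}(F)$ iff $\Phi(g)$ is semisimple is \emph{not} an immediate consequence of Luna's slice theorem, which is formulated over algebraically closed fields: over a local field one needs the rational form of the closed-orbit criterion (comparison of $G(F)$-orbit closedness with $G(\bar F)$-orbit closedness and the semisimplicity criterion for symmetric spaces), and this is exactly what Carmeli's Theorem 4.13 supplies. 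Your final normal-form step (peeling off $\pm1$-eigenspaces and reducing the generic part to a $\GL_\nu$-conjugacy of $A$) is the content of Jacquet--Rallis and you correctly identify it as such; as written it is an outline rather than a proof, but the plan is the right one.
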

\begin{proof}
	See \cite[Theorem 4.13]{carmeli2015stability}.
\end{proof}

Thanks to Theorem \ref{thm2.1}, if 
\[\mathscr{C}(R(N_{\mathcal{O},x}^{\GL_n(F)}))^{\tilde{\mathcal{H}}_{p,q,x},\tilde{\mu}}=0 \]
implies
\[\mathscr{C}(Q(N_{\mathcal{O},x}^{\GL_n(F)}))^{\tilde{\mathcal{H}}_{p,q,x},\tilde{\mu}}=0 \]
for any $\tilde{\mathcal{H}}_{p,q}$-closed orbit $\mathcal{O}=\tilde{\mathcal{H}}_{p,q}x$, where $\tilde{\mathcal{H}}_{p,q,x}$ is the stabilizer of $x$, then Theorem \ref{thm:dist} holds.

At first, let us consider the simple case: $\nu=0$.
\begin{lem}\label{nu=0}
	We have 
	$${\mathscr{C}}(R(N^{\GL_n(F)}_{\tilde{\mathcal{H}}_{p,q} x_{p,k},x_{p,k} }))^{\tilde{\mathcal{H}}_{p,q,x_{p.k}},\tilde{\mu}}=0\Longrightarrow
	{\mathscr{C}}(Q(N^{\GL_n(F)}_{\tilde{\mathcal{H}}_{p,q} x_{p,k},x_{p,k} }))^{\tilde{\mathcal{H}}_{p,q,x_{p,k}},\tilde{\mu}}=0 $$
	where $\tilde{\mathcal{H}}_{p,q,x_{p,k}}=\{h\in\tilde{\mathcal{H}}_{p,q}|h\cdot x_{p,k}=x_{p,k} \}$ is the stabilizer of $x_{p,k}$ in $\tilde{\mathcal{H}}_{p,q}$.
\end{lem}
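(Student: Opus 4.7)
The plan is to argue directly: since $Q(N) = R(N) \sqcup \Gamma(N)$, where $N$ denotes the normal space $N^{\GL_n(F)}_{\tilde{\mathcal{H}}_{p,q} x_{p,k},\,x_{p,k}}$ and $\Gamma(N)$ is its nilpotent cone, the hypothesis forces every $f\in\mathscr{C}(Q(N))^{\tilde{\mathcal{H}}_{p,q,x_{p,k}},\tilde\mu}$ to be supported on $\Gamma(N)$. Hence it suffices to prove
\[ \mathscr{C}_{\Gamma(N)}(N)^{\tilde{\mathcal{H}}_{p,q,x_{p,k}},\tilde\mu} = 0, \]
which I would deduce from Theorem~\ref{vanishing:I}.

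To set this up, I would compute the stabilizer and the normal space by a direct block calculation. Since $\nu = 0$, the double coset representative is the permutation-type matrix $x_{p,k}$, whose block structure corresponds to the partition $(k, p-k, k, p+1-k)$ of $n$. Solving $h_1 x_{p,k} h_2^{-1} = x_{p,k}$ block-by-block for $(h_1, h_2)\in\mathcal{H}_{p,q}$ shows that the stabilizer in $\mathcal{H}_{p,q}$ is diagonally embedded and isomorphic to $\GL_k(F)\times\GL_{p-k}(F)\times\GL_k(F)\times\GL_{p+1-k}(F)$. Since $x_{p,k}^{\,t} = x_{p,k}$, the involution $\sigma$ fixes $x_{p,k}$, so $\tilde{\mathcal{H}}_{p,q,x_{p,k}}$ is obtained by adjoining $\sigma$; this extended stabilizer naturally contains a copy of the group $\tilde{H}_{*,*}$ appearing in Theorem~\ref{vanishing:I}. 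A standard tangent-space computation then identifies $N$ with a direct sum of spaces of the form $I_{a,b}$ indexed by the off-diagonal block pairs, with a matching decomposition $\Gamma(N) = \prod \mathcal{N}_{a,b}$ on which the stabilizer acts through the natural $\tilde{H}_{a,b}$-action of Section~3.

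Next I would verify that $\tilde\mu$ restricts on each factor to the sign character $\chi$, possibly twisted by a pseudo-algebraic character that is absorbed by the goodness assumption on $\mu$, and then apply Theorem~\ref{vanishing:I} factor-by-factor together with a Fubini-type product argument for tempered generalized functions to conclude the vanishing. The main obstacle is the bookkeeping in the setup: precisely identifying the block decomposition of $N$, the induced action of the stabilizer, and verifying that $\tilde\mu|_{\tilde{\mathcal{H}}_{p,q,x_{p,k}}}$ matches the sign character on the relevant factor so that Theorem~\ref{vanishing:I} applies. Once these identifications are in place, the support argument together with Theorem~\ref{vanishing:I} closes the proof.
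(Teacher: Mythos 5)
Your overall framework (reduce to support on the nilpotent cone $\Gamma(N)$, compute the stabilizer and the normal space as a direct sum, use a tensor-product decomposition of equivariant generalized functions on the two summands) matches the paper. The stabilizer and normal-space computations you outline are also essentially right: $N\cong I_{k,k}\oplus I_{p-k,q-k}$ with stabilizer $(\GL_k(F)\times\GL_k(F)\times H_{p-k,q-k})\rtimes\langle\sigma\rangle$.

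However, the endgame has a genuine gap. You propose to ``apply Theorem~\ref{vanishing:I} factor-by-factor'' after claiming that $\tilde\mu$ restricts on each factor to $\chi$ up to a pseudo-algebraic twist that ``goodness'' absorbs. Neither of these is correct for the $I_{k,k}$ summand. First, Theorem~\ref{vanishing:I} is stated only for $I_{p,p+1}$ (block sizes differing by one); it simply does not apply to $I_{k,k}$, and the proof in Section~3 depends essentially on $q=p+1$ (Proposition~\ref{observation} fails already for $I_{6,8}$, as noted in Remark~\ref{rem:fail}, and certainly says nothing about the equal-block case). Second, on the $\GL_k(F)\times\GL_k(F)$ part of the stabilizer the character $\mu\otimes\mu^{-1}$ restricts to $\mu_k\otimes\mu_k^{-1}$ with $\mu_k=\mu_F\circ\det$. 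This is an arbitrary good character of $\GL_k(F)$, not a pseudo-algebraic twist of the trivial character, and there is no ``absorption'' mechanism; the goodness hypothesis is instead precisely the hypothesis needed to invoke the Chen--Sun vanishing result \cite[Proposition~3.9]{sun2020} for the pair $(\GL_{2k}(F),\GL_k(F)\times\GL_k(F))$. What the paper actually does is apply Theorem~\ref{vanishing:I} to the $I_{p-k,q-k}$ summand (where the restricted character is trivial on $H_{p-k,q-k}$, so the equivariance is just $\chi$) and cite Chen--Sun's result for the $I_{k,k}$ summand with the twisted character $\widetilde{\mu_k\otimes\mu_k^{-1}}$. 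To repair your proof you need to separate these two cases and bring in the Chen--Sun input explicitly; the $k=p$ edge case in particular cannot be handled by Theorem~\ref{vanishing:I} alone.
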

\begin{proof}
	By easy computation, we have $\tilde{\mathcal{H}}_{p,q,x_{p,k}}\cong(\GL_k(F)\times\GL_k(F) \times {H}_{p-k,q-k})\rtimes \langle\sigma\rangle$, where $\sigma$ acts on $\GL_k(F)\times \GL_k(F)\times H_{p-k,q-k}$ by the involution
	\[(g_1,g_2,h)\mapsto ((g_2^{-1})^t,(g_1^{-1})^t,(h^{-1})^t) \]
	for $g_i\in\GL_k(F)$ and $h\in H_{p,q}$.
	 The normal bundle (see \cite[Lemma 4.3]{sun2020}) is given by
	\[N_{\tilde{\mathcal{H}}_{p,q}x_{p,k},x_{p,k}}^{\GL_n(F)}= \frac{\mathfrak{gl}_n(F)}{\mathfrak{h}_{p,q}+Ad_{x_{p,k}}\mathfrak{h}_{p,q} }\cong I_{k,k}\oplus I_{p-k,q-k}. \]
	The action of $\tilde{\mathcal{H}}_{p,q,x_{p,k}}$ on $N_{\tilde{\mathcal{H}}_{p,q}x_{p,k},x_{p,k}}^{\GL_n(F)}$ is given by
	\[(g_1,g_2,h)\cdot (x,y,z)=(g_2xg_1^{-1},g_1yg_2^{-1},hzh^{-1}) \]
	and $\sigma\cdot (x,y,z)=(x^t,y^t,z^t)$ for $g_i\in\GL_k(F),h\in H_{p-k,q-k}, (x,y)\in I_{k,k}$ and $z\in I_{p-k,q-k}$.
	By \cite[Proposition 2.5.8]{dima2009duke}, ${\mathscr{C}}(N_{\tilde{\mathcal{H}}_{p,q}x_{p,k},x_{p,k}}^{\GL_n(F)} )^{\tilde{\mathcal{H}}_{p,q,x_{p,k}},\tilde{\mu}}$
	is a product of
	\[{\mathscr{C}}(I_{k,k})^{(\GL_k(F)\times\GL_k(F))\rtimes\langle\sigma\rangle,\widetilde{\mu_k\otimes\mu_k^{-1}}}\mbox{  and  } {\mathscr{C}}(I_{p-k,q-k})^{\tilde{H}_{p-k,q-k},\chi} \]
	where $\mu_k=\mu_F\circ\det$ is the character of $\GL_k(F)$, $\mu_k\otimes\mu_k^{-1}$
	is the character of $\GL_k(F)\times\GL_k(F)$, $\widetilde{\mu_k\otimes\mu_k^{-1}}$ is the character of $(\GL_k(F)\times\GL_k(F))\rtimes\langle\sigma\rangle$ twisted by the sign character.
	Thanks to Theorem \ref{vanishing:I}, ${\mathscr{C}}_{\mathcal{N}_{p-k,q-k}}(I_{p-k,q-k})^{\tilde{\mathcal{H}}_{p-k,q-k},\chi}=0$.
Thus it suffices to show that $${\mathscr{C}}_{\mathcal{N}_k}(I_{k,k})^{(\GL_k(F)\times\GL_k(F))\rtimes\langle\sigma\rangle,\widetilde{\mu_k\otimes\mu_k^{-1}}}=0.$$
It follows from \cite[Proposition 3.9]{sun2020} since $\mu_F$ is a good character.
 We have finished the proof.
\end{proof}
\begin{rem}
	If $x\in H_{p,q}$, then $\mathcal{O}=\tilde{\mathcal{H}}_{p,q}x=H_{p,q}$ is a closed orbit in $\GL_n(F) $.  The group embedding from the stabilizer subgroup $\tilde{\mathcal{H}}_{p,q,x}\cong\tilde{H}_{p,q}$ of $x$ to $\tilde{\mathcal{H}}_{p,q}$ is given by
	\[(h,\delta)\mapsto \begin{cases}
	(h,x^{-1}hx)&\mbox{ if }\delta=1;
	\\ (h,x^{-1}hx^t)\sigma&\mbox{ if }\delta=\sigma.
	\end{cases} \]
(See \cite[Page 12]{sun2020}.)	Similarly, we have $$\mathscr{C}(R(N_{\mathcal{O},x}^{\GL_n(F)}))^{\tilde{\mathcal{H}}_{p,q,x},\tilde{\mu}}=0
\Longrightarrow \mathscr{C}(Q(N_{\mathcal{O},x}^{\GL_n(F)}))^{\tilde{\mathcal{H}}_{p,q,x},\tilde{\mu}}=0
$$ for $x\in H_{p,q}$.
\end{rem}
Now we can give a proof of Theorem \ref{thm:dist}.
\begin{proof}[Proof of Theorem \ref{thm:dist} ] Applying Theorem \ref{thm2.1}, we only need to prove that there does not exist any $(\tilde{\mathcal{H}}_{p,q},\tilde{\mu})$-equivariant tempered generalized function on the normal bundle of $\tilde{\mathcal{H}}_{p,q}$-closed orbits.
	Thanks to Lemma \ref{nu=0}, we have proved  Theorem \ref{thm:dist} if $\nu=0$. Thus applying Lemma \ref{Jac:lem}, it is reduced to prove that 
	\begin{equation}\label{substep}
	\mathscr{C}(R(N_{\tilde{\mathcal{H}}_{p,q}x,x}^{\GL_{n}(F)}))^{\tilde{\mathcal{H}}_{p,q,x},\tilde{\mu}}=0\Longrightarrow\mathscr{C}(Q(N_{\tilde{\mathcal{H}}_{p,q}x,x}^{\GL_{n}(F)}))^{\tilde{\mathcal{H}}_{p,q,x},\tilde{\mu}}=0  \end{equation}
	for the closed orbit $\tilde{\mathcal{H}}_{p,q}x$, where
	\[x=\begin{pmatrix}
	g_{11}&&g_{12}\\&x_{p-\nu,k}&\\ g_{21}&&g_{22}
	\end{pmatrix}\in\GL_{n}(F) \] and $x\theta_{p,q}(x^{-1})=\begin{pmatrix}
	A&&&&&\mathbf{1}_\nu\\&-\mathbf{1}_k\\&&\mathbf{1}_{p-\nu-k}\\&&&-\mathbf{1}_k\\&&&&\mathbf{1}_{q-\nu-k}
	&\\A^2-\mathbf{1}_\nu&&&&&A
	\end{pmatrix}$,
	 $A$ is a semisimple element in $Mat_{\nu,\nu}(F)$ for $\nu=1,2,\cdots, p-k,$ without eigenvalues $\pm1$.
	Futhermore, we may assume that $x\theta_{p,q}(x)=\theta_{p,q}(x)x$ (where $x$ is called normal in the sense of \cite[\S7.4]{dima2009duke}), $A$ is a scalar matrix and $A^2\neq\mathbf{1}_\nu$. Then 
	\[\tilde{\mathcal{H}}_{p,q,x}\cong(\GL_{\nu}(F)\times(\GL_k(F)\times\GL_k(F))\times H_{p-\nu-k,q-\nu-k})\rtimes\langle\sigma\rangle\cong (\GL_{\nu}(F)\rtimes\langle\sigma\rangle)\times \tilde{\mathcal{H}}_{p-\nu,q-\nu,x_{p-\nu,k}}\]
	$\tilde{\mu}|_{(\GL_{\nu}(F)\rtimes\langle\sigma\rangle)}=\chi$ is the sign character
	and $$N_{\tilde{\mathcal{H}}_{p,q}x,x}^{\GL_n(F)}\cong Mat_{\nu,\nu}(F)\oplus I_{k,k}\oplus I_{p-k-\nu,q-k-\nu} \cong Mat_{\nu,\nu}(F)\oplus N_{\tilde{\mathcal{H}}_{p-\nu,q-\nu}x_{p-\nu,k},x_{p-\nu,k}}^{\GL_{n-2\nu}(F)},$$ where $\GL_{\nu}(F)$ acts on $Mat_{\nu,\nu}(F)$ by inner conjugation and $\sigma$ acts on $Mat_{\nu,\nu}(F)$ by the matrix transpose. Therefore \eqref{substep} follows from Lemma \ref{nu=0} and
	\[\mathscr{C}(Mat_{\nu,\nu}(F))^{\GL_\nu(F)\rtimes\langle\sigma\rangle,\chi }=0. \]
(See \cite[Theorem D]{sun2020}.)	This finishes the proof.
\end{proof}

\section{Proof of Theorem \ref{p=1}}
The method in this paper does not work for arbitrary $p$ and $q$ (see Remark \ref{rem:fail}). However, we can still prove several cases if $p$ is small, such as $p=1$. The main purpose in this section
is to study the case for the pair $(\GL_n(F),\GL_1(F)\times \GL_{n-1}(F)  )$. Recall that $H_{1,n-1}=\GL_1(F)\times \GL_{n-1}(F)$. We can define $I_{1,n-1},\mathcal{N}_{1,n-1},\tilde{H}_{1,n-1},\mathcal{H}_{1,n-1}$ and
$\tilde{\mathcal{H}}_{1,n-1}$ similarly. Given a closed orbit $\tilde{\mathcal{H}}_{1,n-1}x$ in $\GL_n(F)$, we denote $\tilde{\mathcal{H}}_{1,n-1,x}$ the stabilizer of $x$ in $\tilde{\mathcal{H}}_{1,n-1}$.

We follow the method in the previous section to give a proof of Theorem \ref{p=1}
\begin{proof}[Proof of Thoerem \ref{p=1}] The case for $n=2$ is trivial. Assume that $n\geq3$.
	Applying Theorem \ref{thm2.1} and Lemma \ref{Jac:lem}, we only need to prove that
	\[\mathscr{C}(R(N_{\tilde{\mathcal{H}}_{1,n-1}x,x}^{\GL_n(F)}))^{\tilde{\mathcal{H}}_{1,n-1,x},\chi}=0 
	\Longrightarrow \mathscr{C}(Q(N_{\tilde{\mathcal{H}}_{1,n-1}x,x}^{\GL_n(F)}))^{\tilde{\mathcal{H}}_{1,n-1,x},\chi}=0
	\]
	for $x=\begin{pmatrix}
	&1\\1\\&& \mathbf{1}_{n-2}
	\end{pmatrix}^k(k=0,1)$ or $x$ satisfying \[x\omega_{1,n-1}x^{-1}\omega_{1,n-1}=\begin{pmatrix}
	A&&1\\&\mathbf{1}_{n-2}\\A^2-1&&A
	\end{pmatrix} \] where $A$ is a scalar in $Mat_{\nu,\nu}(F)=F$ and $A^2\neq1$. Now we separate them into three cases.
	\begin{itemize}
		\item Assume $\nu=0$ and $k=0$. Then $N_{\tilde{\mathcal{H}}_{1,n-1}x,x}^{\GL_n(F)}\cong I_{1,n-1}$ and the stabilizer of $x=\mathbf{1}_n$ is isomorphic to $\tilde{H}_{1,n-1}$. Then it is enough to show that
		\[\mathscr{C}_{\mathcal{N}_{1,n-1}}(I_{1,n-1})^{\tilde{H}_{1,n-1},\chi}=0. \]
		In fact, we will prove that a stronger result
		\[ \mathscr{C}_{\mathcal{N}_{1,n-1}}(I_{1,n-1})^{\tilde{H}_{1,n-2},\chi}=0. \]
		(See the equality \eqref{NIH} which will be proved later.) Then we are done.

		\item Assume $\nu=0$ and $k=1$. Then $N_{\tilde{\mathcal{H}}_{1,n-1}x,x}^{\GL_n(F)}\cong F\oplus F$ and
		\[\tilde{\mathcal{H}}_{1,n-1,x}\cong (\GL_1(F)\times\GL_1(F)\times\GL_{n-2}(F) )\rtimes\langle\sigma\rangle. \]
		The action of $\tilde{\mathcal{H}}_{1,n-1,x}$ on $F\oplus F$ is given by
		$$(g_1,g_2,h)\cdot(x,y)=(g_2xg_1^{-1},g_1yg_2^{-1})$$ 
		and $\sigma\cdot(x,y)=(x,y)$ for $g_i\in\GL_1(F),h\in\GL_{n-2}(F)$ and $x,y\in F$.
		Moreover, $$ \mathscr{C}_{\mathcal{N}_{1,1}}(I_{1,1})^{\tilde{\mathcal{H}}_{1,n-1,x},\widetilde{\mu_F\otimes\mu_F^{-1}}}=0$$
 since the  element $\sigma$ fixes $\mathcal{N}_{1,1}$ pointwisely.
		\item Assume $k=0$ and $\nu=1$. Then $N_{\tilde{\mathcal{H}}_{1,n-1}x,x}^{\GL_n(F)}\cong F$ and
		$\tilde{\mathcal{H}}_{1,n-1,x}\cong (\GL_1(F)\times\GL_{n-2}(F))\rtimes\langle\sigma\rangle$. The action on $F$ is trivial. This implies $\mathscr{C}(F)^{\tilde{\mathcal{H}}_{1,n-1,x},\chi}=0$.
	\end{itemize}
We have finished the proof of Theorem \ref{p=1}.
\end{proof}

\section{Applications}
In this section, we use a similar idea to give   another application in the representation theory.

In \cite{maxmirabolic}, assuming that $F$ is non-archimedean, Gurevich investigated the role of the mirabolic subgroup on  the symmetric variety $\GL_n(F)/H_{p,n-p}$ where $H_{p,n-p}=\GL_p(F)\times\GL_{n-p}(F)$. More precisely,
let $P$ be a mirabolic subgroup of $\GL_n(F)$ consisting of matrices with last row vector $(0,\cdots,0,1)$.  Let $\GL_n(F)$ act on $Mat_{n,n}(F)$ by inner conjugation.
Bernstein \cite{bernstein1984} proved that any $P$-invariant generalized function on $Mat_{n,n}(F)$ must be $\GL_n(F)$-invariant. We expect that there is a more general phenomenon related to the mirabolic subgroup $P$.

Define $I_{p,n-p}$ and $\mathcal{N}_{p,n-p}$ as before. Let $P\cap H_{p,n-p}$ act on $I_{p,n-p}$ by inner conjugation. It is expected that any $P\cap H_{p,n-p}$-invariant tempered generalized function on $I_{p,n-p}$ supported on $\mathcal{N}_{p,n-p}$ is also $H_{p,n-p}$-invariant.
The following is a baby case.
\begin{lem}
	Any $P\cap H_{n-1,1}$-invariant tempered generalized function on $I_{n-1,1}$ supported on $\mathcal{N}_{n-1,1}$ is also $H_{n-1,1}$-invariant.
\end{lem}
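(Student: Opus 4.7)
The plan is to observe that this baby case is essentially tautological: the support hypothesis on $\mathcal{N}_{n-1,1}$ is not needed, because the $\GL_1(F)$-factor of $H_{n-1,1} = \GL_{n-1}(F) \times \GL_1(F)$ acts on $I_{n-1,1}$ through the same transformations as the scalar matrices inside $P \cap H_{n-1,1} \cong \GL_{n-1}(F) \times \{1\}$.

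Concretely, I would first spell out the two relevant conjugation actions. By the formula recalled in \S 3, the element $(a, b) \in H_{n-1,1}$ acts on $(x, y) \in I_{n-1,1}$ by $(a,b)\cdot(x,y) = (axb^{-1},\, bya^{-1})$. Specializing to $(I_{n-1}, b)$, the generator of the $\GL_1(F)$-factor, yields the transformation $(x, y) \mapsto (b^{-1}x,\, by)$. On the other hand, the scalar matrix $(b^{-1} I_{n-1},\, 1)$, which lies in $P \cap H_{n-1,1}$, acts as
\[
(x, y) \;\longmapsto\; \bigl(b^{-1} I_{n-1}\cdot x,\; y\cdot (b^{-1} I_{n-1})^{-1}\bigr) = (b^{-1}x,\, by).
\]
The two transformations agree on all of $I_{n-1,1}$, so $(I_{n-1}, b)$ and $(b^{-1} I_{n-1},\, 1)$ induce the same automorphism.

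The conclusion is then immediate: if $f$ is $P \cap H_{n-1,1}$-invariant, then it is invariant under $(b^{-1} I_{n-1}, 1)$ for every $b \in F^\times$, and hence under the identical action of $(I_{n-1}, b)$; since $H_{n-1,1}$ is generated by its $\GL_{n-1}(F)$- and $\GL_1(F)$-factors, $f$ is $H_{n-1,1}$-invariant. The nilpotent-cone support hypothesis is never used, so what one actually obtains is the unconditional statement for all $P \cap H_{n-1,1}$-invariant tempered generalized functions on $I_{n-1,1}$. The only real obstacle, as the paper acknowledges by calling this a baby case, lies in the analogous statement for $q = n - p > 1$: there $P \cap H_{p,q}$ contains only the mirabolic of the $\GL_q(F)$-factor rather than its whole scalar center, so the above coincidence of actions breaks down, and one would have to combine an orbit analysis on $\mathcal{N}_{p,n-p}$ with a normal-slice argument via Theorem \ref{thm2.1} to conclude.
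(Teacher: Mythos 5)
Your proof is correct and uses essentially the same idea as the paper: the scalar $\GL_1(F)$-factor of $H_{n-1,1}$ acts on $I_{n-1,1}$ through precisely the same transformations as the scalar matrices $b^{-1}\mathbf{1}_{n-1}$ already lying in $P\cap H_{n-1,1}\cong\GL_{n-1}(F)$, so $P\cap H_{n-1,1}$-invariance forces $H_{n-1,1}$-invariance without any appeal to the nilpotent-cone support hypothesis. The paper phrases it as a one-line computation $f(a^{-1}xb,b^{-1}ya)=f(ba^{-1}x,yab^{-1})=f(x,y)$ absorbing the scalar $b$ into the $\GL_{n-1}$-factor, which is the same observation you isolate by matching $(\mathbf{1}_{n-1},b)$ with $(b^{-1}\mathbf{1}_{n-1},1)$; and your closing remark about why this coincidence fails once $q>1$ correctly identifies the obstruction the rest of the paper has to work around.
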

\begin{proof}
	Note that $H_{n-1,1}=\GL_{n-1}(F)\times\GL_1(F) $ and $P\cap H_{n-1,1}\cong \GL_{n-1}(F)$. Let $f\in\mathscr{C}_{\mathcal{N}_{n-1,1}}(I_{n-1,1})^{P\cap H_{n-1,1}}$. Given arbitrary
	$h=\begin{pmatrix}
	a\\&b
	\end{pmatrix}\in  H_{n-1,1}$ for $a\in\GL_{n-1}(F) $ and $b\in F^\times$,
	\[f(h\cdot(x,y))=f(a^{-1}xb,b^{-1}ya)=f(ba^{-1}x,yab^{-1} )=f(x,y) \]
	for any $(x,y)\in \mathcal{N}_{n-1,1}$. Thus $f$ is $H_{n-1,1}$-invariant.
\end{proof}
 Gurevich  proved that
any $P\cap H_{1,n-1}$-invariant generalized function on $\mathcal{N}_{1,n-1}$ is also $H_{1,n-1}$-invariant (see \cite[Theorem 4.2]{maxmirabolic}). Then by \cite[Theorem 3.9]{maxmirabolic} and \cite[Corollary 5.1]{maxmirabolic}, he proved that any $P\cap H_{1,n-1}$-invariant linear functional on an $H_{1,n-1}$-distinguished irreducible smooth representation of $\GL_n(F)$ is also $H_{1,n-1}$-invariant (see \cite[Theorem 1.1]{maxmirabolic}). 
We will give a new and shorter  proof to \cite[Theorem 4.2]{maxmirabolic} here, including the archimedean case.
\begin{prop} \label{prop:max}
	 Let $P$ be the standard mirabolic subgroup of $\GL_n(F)$ consisting of matrices with last row vector $(0,\cdots,0,1)$.
	Let $I_{p,n-p},\mathcal{N}_{p,n-p}$ and $\mathscr{C}_{\mathcal{N}_{p,n-p}}(I_{p,n-p})$ be as before. Then
	\[\mathscr{C}_{\mathcal{N}_{1,n-1}}(I_{1,n-1})^{P\cap H_{1,n-1}}=\mathscr{C}_{\mathcal{N}_{1,n-1}}(I_{1,n-1})^{H_{1,n-1}}. \]
\end{prop}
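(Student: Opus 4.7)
The inclusion $\mathscr{C}_{\mathcal{N}_{1,n-1}}(I_{1,n-1})^{H_{1,n-1}}\subseteq\mathscr{C}_{\mathcal{N}_{1,n-1}}(I_{1,n-1})^{P\cap H_{1,n-1}}$ is immediate, so I focus on the reverse. Let $f\in\mathscr{C}_{\mathcal{N}_{1,n-1}}(I_{1,n-1})^{P\cap H_{1,n-1}}$. The plan is first to promote $P\cap H_{1,n-1}$-invariance to invariance under the transpose involution $\tau(x,y)=(y^t,x^t)$ by applying a vanishing theorem of the type of Theorem \ref{vanishing:I}, and then to upgrade this to full $H_{1,n-1}$-invariance through a purely group-theoretic step.

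For the first step, observe that $H_{1,n-2}$ embeds into $P\cap H_{1,n-1}$ via $(a,c)\mapsto(a,\mathrm{diag}(c,1))$, so $f$ is automatically $H_{1,n-2}$-invariant. The involution $\tau$ preserves $\mathcal{N}_{1,n-1}$, and a direct calculation yields $\tau\circ h=\sigma(h)\circ\tau$ as self-maps of $I_{1,n-1}$ for any $h=(a,b)\in H_{1,n-1}$, where $\sigma(a,b)=(a^{-1},(b^t)^{-1})$; moreover $\sigma$ stabilises the embedded $H_{1,n-2}$ and restricts there to its own defining involution. Consequently $g:=f-\tau\cdot f$ is both $H_{1,n-2}$-invariant and $\tau$-anti-invariant, hence lies in $\mathscr{C}_{\mathcal{N}_{1,n-1}}(I_{1,n-1})^{\tilde{H}_{1,n-2},\chi}$. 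Invoking the vanishing
\[\mathscr{C}_{\mathcal{N}_{1,n-1}}(I_{1,n-1})^{\tilde{H}_{1,n-2},\chi}=0,\]
which is the refinement (NIH) of Theorem \ref{vanishing:I} promised in the proof of Theorem \ref{p=1}, one obtains $g=0$, i.e.\ $\tau\cdot f=f$.

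For the second step, apply the identity $\tau\circ h=\sigma(h)\circ\tau$ to any $h\in P\cap H_{1,n-1}$ and use both $\tau\cdot f=f$ and $h\cdot f=f$ to conclude $\sigma(h)\cdot f=f$. Hence $f$ is also invariant under $\sigma(P\cap H_{1,n-1})=F^\times\times P_{n-1}^{\mathrm{op}}$, where $P_{n-1}^{\mathrm{op}}\subset\GL_{n-1}(F)$ is the opposite mirabolic consisting of matrices with last column equal to $e_{n-1}$. Finally, $P_{n-1}$ and $P_{n-1}^{\mathrm{op}}$ generate $\GL_{n-1}(F)$ as an abstract group: every elementary transvection $\mathbf{1}+cE_{ij}$ with $i\ne j$ lies in $P_{n-1}\cup P_{n-1}^{\mathrm{op}}$, so the subgroup they generate contains $\SL_{n-1}(F)$, while $\det$ restricted to $P_{n-1}$ is already surjective onto $F^\times$. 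Therefore $f$ is $H_{1,n-1}$-invariant, as required.

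The main technical obstacle is the vanishing (NIH) itself; once this is in hand, the argument above is a short symmetry deduction in the spirit of the proof of Theorem \ref{thm:A}. To establish (NIH) one reruns the Fourier-transform and dilation analysis of Proposition \ref{fourier} together with the trace computation of Lemma \ref{mij} for the subgroup $H_{1,n-2}$ acting on $\mathcal{N}_{1,n-1}$, and then verifies the analogue of Proposition \ref{observation}, namely that $\sigma\cdot\mathbf{e}\in H_{1,n-2}\cdot\mathbf{e}$ for every surviving nilpotent $\mathbf{e}$; the restriction $p=1$ tightly constrains the possible graded $\mathfrak{sl}_2(F)$-decompositions of the ambient space and makes this verification accessible.
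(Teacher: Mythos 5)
Your first two steps are exactly the paper's argument, stated a bit more explicitly: (i) decompose $f$ into its $\tau$-symmetric and $\tau$-antisymmetric parts, observe that $H_{1,n-2}$-invariance passes to $\tau\cdot f$ via the intertwining relation $\tau\circ h=\sigma(h)\circ\tau$, and then use the vanishing \eqref{NIH} to kill $f-\tau\cdot f$; (ii) pass from $\tau$-invariance plus $P\cap H_{1,n-1}$-invariance to $\sigma(P\cap H_{1,n-1})$-invariance and then generate all of $H_{1,n-1}$. Both steps are correct, and your transvection computation for why $P_{n-1}$ and $P_{n-1}^{\mathrm{op}}$ generate $\GL_{n-1}(F)$ is a clean way to fill in the phrase ``$P\cap H_{1,n-1}$ and its transpose generate $H_{1,n-1}$'' that the paper leaves implicit.

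The divergence is in how you propose to prove \eqref{NIH}, and this is where I see a genuine gap. You suggest rerunning the graded $\mathfrak{sl}_2$-triple analysis of Proposition \ref{fourier} and Lemma \ref{mij} for the action of the \emph{subgroup} $H_{1,n-2}$ on $\mathcal{N}_{1,n-1}$ and then checking that $\sigma\cdot\mathbf{e}\in H_{1,n-2}\cdot\mathbf{e}$, by analogy with Proposition \ref{observation}. That analogy breaks down for two reasons. First, the Kostant--Rallis classification of nilpotent orbits by graded $\mathfrak{sl}_2$-triples and the associated trace/dilation bookkeeping are tailored to the symmetric pair $H_{1,n-1}$ acting on $I_{1,n-1}$; they do not transfer to the action of the smaller, non-symmetric subgroup $H_{1,n-2}$, whose orbits in $\mathcal{N}_{1,n-1}$ are strictly smaller. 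Second, and concretely, the analogue of Proposition \ref{observation} for $H_{1,n-2}$ fails: take the regular nilpotent $\mathbf{e}=(x,y)$ with $x=(1,0,\ldots,0)$ and $y=(0,\ldots,0,1)^t$, so $xy=0$. Any $h=(a,\mathrm{diag}(c,1))\in H_{1,n-2}$ sends $y$ to $(0,\ldots,0,a^{-1})^t$, which still has vanishing first $n-2$ entries, whereas $\sigma\cdot\mathbf{e}=(y^t,x^t)$ has second component $x^t=(1,0,\ldots,0)^t$. Hence $\sigma\cdot\mathbf{e}\notin H_{1,n-2}\cdot\mathbf{e}$, and the orbit matching you need is simply false for this group. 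The paper's actual route to \eqref{NIH} is different: it writes $I_{1,n-1}=I_{1,n-2}\oplus V\oplus V^\ast$ with $\dim V=1$, observes that nilpotency forces $v^\ast(v)=0$ and hence $v=0$ or $v^\ast=0$, applies a \emph{partial} Fourier transform only in the $V\oplus V^\ast$-direction (which still preserves the $\tilde H_{1,n-2}$-equivariance), and then invokes Aizenbud's lemma that the union of the two hyperplanes $V=0$ and $V^\ast=0$ cannot simultaneously support a nonzero invariant distribution and its partial Fourier transform. This avoids any orbit-matching for the subgroup and is essential to making the argument close; your sketch of \eqref{NIH} would need to be replaced by something of this kind.
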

\begin{proof} Assume $n-1\geq2$.
	We will prove that any generalized function $f\in \mathscr{C}_{\mathcal{N}_{1,n-1}}(I_{1,n-1})^{P\cap H_{1,n-1}}$ satisfies $f(x)=f(x^t)$ for all $x\in I_{1,n-1}$. Then $f$ is invariant with respect to $P^t\cap H^t_{1,n-1}$ and so $f$ is invariant under $$\langle P\cap H_{1,n-1},P^t\cap H^t_{1,n-1}\rangle=H_{1,n-1}.$$
	It is known that $H_{1,n-2}$ is a proper subgroup in $P\cap H_{1,n-1}$. Let $\tilde{H}_{1,n-1}$ be as usual and $\chi$ be its sign character. We will show that 
	\begin{equation}\label{NIH}
	\mathscr{C}_{\mathcal{N}_{1,n-1}}(I_{1,n-1})^{\tilde{H}_{1,n-2},\chi}=0. \end{equation}
	Note that $I_{1,n-1}=I_{1,n-2}\oplus V\oplus V^\ast$ with $\dim V=1$. Let $(\mathbf{e},v,v^\ast)\in I_{1,n-2}\oplus V\oplus V^\ast$ be a unipotent element in $\mathcal{N}_{1,n-1}$. Then
	$v^\ast(v)=0$ (see \cite[\S6.1]{aizenbud2013partial}). Thus either $v=0$ or $v^\ast=0$. 
	Without loss of generality, assume $v=0$. 
	Take any $f\in\mathscr{C}_{\mathcal{N}_{1,n-1}}(I_{1,n-2}\oplus V\oplus V^\ast)^{\tilde{H}_{1,n-2},\chi}$ such that
	$(\mathbf{e},0,v^\ast)\in supp(f)$. 
	Then the partial Fourier transform $\mathfrak{F}_{V\times V^\ast}(f)$
is	also supported on $\tilde{H}_{1,n-2}(\mathbf{e},0,v^\ast)$; see \cite[\S4.2]{aizenbud2013partial}. 
Thanks to \cite[Lemma 6.3.4]{aizenbud2013partial} that $(I_{1,n-2}\oplus V\oplus \{0\})\sqcup (I_{1,n-2}\oplus\{0\}\oplus V^\ast)$ does not support any nonzero $H_{1,n-2}$-invariant generalized functions, 
  $f=0$. Then any $\tilde{H}_{1,n-2}$-invariant generalized function on $I_{1,n-1}$ is invariant under transposition. This finishes the proof.
\end{proof}

Proposition \ref{prop:max} implies that any $P\cap H_{1,n-1}$-invariant linear functional on an $H_{1,n-1}$-distinguished irreducible smooth admissible representation of $\GL_{n}(F)$ is also $H_{1,n-1}$-invariant. 
\begin{coro}
	Any $P\cap H_{1,n-1}$-invariant linear functional on an $H_{1,n-1}$-distinguished irreducible smooth admissible representation of $\GL_n(F)$ is also $H_{1,n-1}$-invariant.
\end{coro}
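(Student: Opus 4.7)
The plan is to deduce the corollary from Proposition~\ref{prop:max} by mimicking the representation-theoretic machinery of Gurevich in \cite[\S3, \S5]{maxmirabolic}, for which the distributional statement we have just proved supplies the key input. Let $\pi$ be an $H_{1,n-1}$-distinguished irreducible smooth admissible representation of $\GL_n(F)$ and let $\ell\in\pi^\vee$ be a $P\cap H_{1,n-1}$-invariant linear functional; the goal is to show $\ell$ is $H_{1,n-1}$-invariant.

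The first step is to attach to $\ell$ a tempered generalized function $T_\ell$ on the symmetric variety $X:=\GL_n(F)/H_{1,n-1}$ via Frobenius reciprocity: $\ell$ corresponds to an intertwining operator from $\pi$ into a smooth induced space on $X$, and pairing with a smooth vector produces a distribution on $X$ with a twisted left-$(P\cap H_{1,n-1})$-equivariance. The second step is Harish-Chandra/Luna descent: invariance of $T_\ell$ near the closed $H_{1,n-1}$-orbit through the identity coset is controlled by invariance of its restriction to the normal slice, and this slice is canonically identified with the tangent space $I_{1,n-1}$ analyzed in \S6. The third step is a Bernstein-center / infinitesimal-character argument applied to $\pi$, forcing the support of the localized distribution to lie in the nilpotent cone $\mathcal{N}_{1,n-1}$. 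At that point Proposition~\ref{prop:max} upgrades the $P\cap H_{1,n-1}$-invariance to full $H_{1,n-1}$-invariance, and reversing the descent together with Frobenius reciprocity produces the desired $H_{1,n-1}$-invariance of $\ell$.

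The main obstacle is the support reduction to $\mathcal{N}_{1,n-1}$: a priori $T_\ell$ could also be supported along the larger closed $H_{1,n-1}\times H_{1,n-1}$-orbits classified in Lemma~\ref{Jac:lem}. In the nonarchimedean case this is exactly what is achieved in \cite[Theorem 3.9]{maxmirabolic} and \cite[Corollary 5.1]{maxmirabolic}, where one exploits the finitely generated Bernstein-module structure of $\pi$ to eliminate the semisimple strata. In the archimedean setting the same reduction must be carried out using Casselman-Wallach globalization and the tempered generalized function framework of \cite{dima2009duke}, together with Luna's slice theorem at each closed orbit separately. Once each stratum is handled, Proposition~\ref{prop:max} supplies the required vanishing on the unipotent fiber, and the corollary follows.
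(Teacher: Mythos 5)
Your plan essentially reconstructs Gurevich's original argument from \cite{maxmirabolic}, with Proposition \ref{prop:max} plugged in as the distributional input; this is a legitimate route in the nonarchimedean setting, but it is not the route the paper takes, and the hardest step of your plan is left as a black box. Specifically, your third step --- a Bernstein-center argument forcing the support of the localized distribution into the nilpotent cone --- is exactly \cite[Theorem 3.9]{maxmirabolic}, a nontrivial and nonarchimedean-specific theorem which this paper does not re-prove; the archimedean substitute you gesture at (Casselman--Wallach globalization plus Luna slices) is not carried out anywhere, so as written this remains a genuine gap rather than a parenthetical.

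The paper sidesteps the support-reduction step entirely. After reducing to a distributional statement on $\GL_n(F)/H_{1,n-1}$ via \cite[Corollary 5.1]{maxmirabolic} and \cite[Theorem 3.6, Lemma 3.7]{kemarsky}, it proves the strictly stronger sign-equivariant vanishing
\[\mathscr{C}(\GL_n(F)/H_{1,n-1})^{\tilde{H}_{1,n-2},\chi}=0,\]
where $H_{1,n-2}$ sits inside $P\cap H_{1,n-1}$ and $\sigma$ is the transposition. This single vanishing, combined with the fact that $P\cap H_{1,n-1}$ and its transpose generate all of $H_{1,n-1}$, yields $H_{1,n-1}$-invariance directly, with no need to control the support of the distribution. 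The vanishing is established by Theorem \ref{thm2.1} applied over \emph{all} closed orbits of Lemma \ref{Jac:lem} at once; the two slice computations that arise are $\mathscr{C}_{\mathcal{N}_{1,1}}(I_{1,1})^{\tilde{H}_{1,1},\chi}=0$ (from \cite[Theorem D]{sun2020}) and $\mathscr{C}_{\mathcal{N}_{1,n-1}}(I_{1,n-1})^{\tilde{H}_{1,n-2},\chi}=0$. This last equality is \eqref{NIH}, an intermediate result \emph{inside} the proof of Proposition \ref{prop:max}, not the Proposition itself: treating Proposition \ref{prop:max} as a black box, as your plan does, is not strong enough here, because what the argument actually needs is equivariant vanishing under the smaller group $\tilde{H}_{1,n-2}$, which is precisely what that proof establishes en route.
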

\begin{proof}
		Denote by $\mathscr{D}(\GL_{n}(F))$
	the distributions on $\GL_{n}(F)$.
	Following \cite[Corollary 5.1]{maxmirabolic}, 
	and \cite[Theorem 3.6]{kemarsky}, it suffices to show that
	\[ \mathscr{D}(\GL_{n}(F))^{(P\cap H_{1,n-1})\times H_{1,n-1}}=\mathscr{D}(\GL_n(F))^{H_{1,n-1}\times H_{1,n-1}}. \]
	Note that $\mathscr{D}(\GL_{n}(F))^{(P\cap H_{1,n-1})\times H_{1,n-1}}\cong\mathscr{D}(\GL_{n}(F)/H_{1,n-1})^{P\cap H_{1,n-1}}$; see \cite[Lemma 3.7]{kemarsky}.
	Thus it is equivalent to proving
	\[\mathscr{D}(\GL_{n}(F)/H_{1,n-1})^{P\cap H_{1,n-1}}=\mathscr{D}(\GL_{n}(F)/H_{1,n-1})^{H_{1,n-1}}. \]
		 We shall show that 
	\[\mathscr{C}(\GL_{n}(F)/H_{1,n-1})^{\tilde{H}_{1,n-2},\chi}=0. \]
		which will imply $\mathscr{D}(\GL_{n}(F)/H_{1,n-1})^{\tilde{H}_{1,n-2},\chi}=0$
	due to a general principle of ”distribution versus Schwartz distribution” (see \cite[Theorem 4.0.2]{dima2009duke}). Then we are done since $P\cap H_{1,n-1}$ and its transpose generate the whole group $H_{1,n-1}$.
	
	Suppose that $n\geq3$. Applying Theorem \ref{thm2.1} and Lemma \ref{Jac:lem}, it is enough to show that $$\mathscr{C}_{\mathcal{N}_{1,1}}(I_{1,1})^{\tilde{H}_{1,1},\chi}=0=\mathscr{C}_{\mathcal{N}_{1,n-1}}(I_{1,n-1})^{\tilde{H}_{1,n-2},\chi}.$$
	which follows from \cite[Theorem D]{sun2020} and \eqref{NIH}. 
	Here the action of $\tilde{H}_{1,1}$
	on $I_{1,1}$ is given by
	\[(a,b)\cdot (x,y)=(bxa^{-1},ayb^{-1})\mbox{  and  }\sigma(x,y)=(x,y). \]
	This finishes the proof.
\end{proof}

In fact, we can prove a bit more.
Let $P$ be the standard mirabolic subgroup of $\GL_{2p+1}(F)$ 
consisting of matrices with last row vector $(0,\cdots,0,1)$. Then
$H_{p,p}$ is a proper subgroup of $P\cap H_{p,p+1}$.
\begin{thm} \label{mir} Let $F$ be nonarchimedean.
	Any $P\cap H_{p,p+1}$-invariant linear functional on an $H_{p,p+1}$-distinguished irreducible smooth  representation of $\GL_{2p+1}(F)$ is also $H_{p,p+1}$-invariant.
\end{thm}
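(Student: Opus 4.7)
The plan is to parallel the proof of the preceding corollary, now for the pair $(p,p+1)$. By \cite[Corollary 5.1]{maxmirabolic} combined with \cite[Theorem 3.6]{kemarsky}, the theorem is equivalent to the equality of distribution spaces
\[
\mathscr{D}(\GL_{2p+1}(F)/H_{p,p+1})^{P\cap H_{p,p+1}}=\mathscr{D}(\GL_{2p+1}(F)/H_{p,p+1})^{H_{p,p+1}}.
\]
Since $H_{p,p}$ embeds as a proper subgroup of $P\cap H_{p,p+1}$, and $P\cap H_{p,p+1}$ together with its transpose generates all of $H_{p,p+1}$, it suffices to establish
\[
\mathscr{C}(\GL_{2p+1}(F)/H_{p,p+1})^{\tilde{H}_{p,p},\chi}=0,
\]
where $\tilde H_{p,p}=H_{p,p}\rtimes\langle\sigma\rangle$ acts on the symmetric space from the left, $\sigma$ is the transposition involution, and $\chi$ is the sign character. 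The distributional version then follows from the ``Schwartz distribution versus distribution'' principle \cite[Theorem 4.0.2]{dima2009duke}.

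Next, I would apply Theorem \ref{thm2.1} together with the orbit classification in Lemma \ref{Jac:lem} to localize the vanishing to the normal bundles at closed $\tilde{\mathcal H}_{p,p+1}$-orbits. Mirroring the computation in Lemma \ref{nu=0} and the proof of Theorem \ref{thm:dist}, the normal bundle at the representative indexed by $(k,\nu)$ factors as
\[
N^{\GL_{2p+1}(F)}_{\tilde{\mathcal{H}}_{p,p+1}x,x}\cong Mat_{\nu,\nu}(F)\oplus I_{k,k}\oplus I_{p-\nu-k,p+1-\nu-k},
\]
and by \cite[Proposition 2.5.8]{dima2009duke} the equivariant tempered generalized functions split as a tensor product over these three summands. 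The $Mat_{\nu,\nu}(F)$ piece is killed by \cite[Theorem D]{sun2020}; the $I_{k,k}$ piece with trivial left character is killed by the classical Gelfand--Kazhdan-type vanishing for $(\GL_{2k}(F),H_{k,k})$ (from \cite{jacquet1996linear} in the non-archimedean case), and for $k\leq 1$ it is immediate because the transpose acts trivially on $\mathcal{N}_{k,k}$ while $\chi(\sigma)=-1$.

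The essential new ingredient, and the main obstacle, is the strengthened variant of Theorem \ref{vanishing:I} announced in the introduction, namely
\[
\mathscr{C}_{\mathcal{N}_{p',p'+1}}(I_{p',p'+1})^{\tilde{H}_{p',p'},\chi}=0\quad\text{for every }p'\geq 0,
\]
obtained by restricting the acting group from $H_{p',p'+1}$ to the smaller subgroup $H_{p',p'}$. I plan to rerun the proofs of Proposition \ref{fourier} and Proposition \ref{observation} in this setting. The Fourier-theoretic inputs, Lemma \ref{key:lem} and Theorem \ref{duke}, rely only on the non-degeneracy of $\langle\cdot,\cdot\rangle_{\mathfrak{gl}_n(F)}|_{I_{p',p'+1}}$ and the $F^\times$-dilation structure, both of which are insensitive to shrinking the acting group, so the numerical constraint $tr(2-\mathbf{h})|_{I^{\mathbf{f}}_{p',p'+1}}=2p'(p'+1)$ is preserved. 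For Proposition \ref{observation} in the smaller-group setting, the standard regular nilpotent $\mathbf{e}=(x_0,y_0)$ with $x_0=(\mathbf{1}_{p'}\ 0)$ and $y_0=(\mathbf{1}_{p'}\ 0)^t$ satisfies $\sigma\cdot\mathbf{e}=(y_0^t,x_0^t)=\mathbf{e}$ by a direct computation, so trivially $\sigma\cdot\mathbf{e}\in H_{p',p'}\mathbf{e}$. The delicate step, which I expect to demand the most care, is to verify the analogous relation $\sigma\cdot\mathbf{e}'\in H_{p',p'}\mathbf{e}'$ on every $H_{p',p'}$-sub-orbit of the regular $H_{p',p'+1}$-nilpotent orbit, equivalently that $\sigma$ acts by an inner automorphism on the component group of the $H_{p',p'+1}$-centralizer of $\mathbf{e}$ relative to $H_{p',p'}$; this should reduce to an explicit matrix calculation exploiting that the centralizer of a regular nilpotent is abelian.
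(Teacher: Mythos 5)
Your reduction steps are the same as the paper's: using \cite[Corollary 5.1]{maxmirabolic}, reducing to the equality of distribution spaces, observing that $H_{p,p}$ sits inside $P\cap H_{p,p+1}$ while $P\cap H_{p,p+1}$ and its transpose generate $H_{p,p+1}$, and so concluding once one proves $\mathscr{C}(\GL_{2p+1}(F)/H_{p,p+1})^{\tilde H_{p,p},\chi}=0$, which by Theorem \ref{thm2.1} and Lemma \ref{Jac:lem} localizes to normal bundles. Up to there you are on track.

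The gap is in the final and decisive step. You propose to establish the strengthened vanishing $\mathscr{C}_{\mathcal N_{p',p'+1}}(I_{p',p'+1})^{\tilde H_{p',p'},\chi}=0$ by ``rerunning'' Propositions \ref{fourier} and \ref{observation} with the smaller group and asserting that Lemma \ref{key:lem} and Theorem \ref{duke} transfer unchanged so that the numerical constraint $tr(2-\mathbf h)|_{I^{\mathbf f}_{p',p'+1}}=2p'(p'+1)$ persists. That claim is false, and for two reasons. First, Lemma \ref{key:lem} (which is \cite[Lemma 3.13]{sun2020}) is not merely a statement about the bilinear form and the $F^\times$-dilation; its proof uses the decomposition $I_{p',p'+1}=[\mathfrak h_{p',p'+1},\mathbf e]+I^{\mathbf f}_{p',p'+1}$ coming from a graded $\mathfrak{sl}_2$-triple with $\mathbf h\in\mathfrak h_{p',p'+1}$, and the whole point of shrinking the group is that the $H_{p',p'}$-orbits are smaller while the invariant distributions form a larger space, so the tangent-space decomposition and hence the homogeneity exponent must change. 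Second, for a nilpotent $\mathbf e'=(\mathbf e,v_0,v_0^\ast)\in I_{p,p}\oplus V\oplus V^\ast$ with $(v_0,v_0^\ast)\ne0$, there is in general no $\mathfrak{sl}_2$-triple $\{\mathbf h',\mathbf e',\mathbf f'\}$ with $\mathbf h'\in\mathfrak h_{p,p}$ at all, since $[\mathbf e',\mathbf f']$ lands in $\mathfrak h_{p,p+1}$ rather than $\mathfrak h_{p,p}$. So the object your constraint is written in terms of is not even available. The further plan of checking that $\sigma\cdot\mathbf e'\in H_{p',p'}\mathbf e'$ across all $H_{p',p'}$-suborbits of the regular nilpotent is not what the paper does and would be a substantial extra project (classifying $H_{p,p}$-orbits inside the $H_{p,p+1}$-nilpotent cone).

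The paper's actual argument for this normal bundle is entirely numerical and uses a new ingredient: Lemma \ref{sun:lem}. One writes $I_{p,p+1}=I_{p,p}\oplus V\oplus V^\ast$ with $\dim V=p$, takes the $\mathfrak{sl}_2$-triple $\{\mathbf h,\mathbf e,\mathbf f\}$ only for the $I_{p,p}$-component of the nilpotent (so $\mathbf h\in\mathfrak h_{p,p}$), and handles the $V\oplus V^\ast$ factor via the Weil representation of $\Mp_{4p}(F)$ together with the isotropy condition built into $E(\mathbf e)$ and $V(\mathbf e)$. This yields $\eta^2=|-|^{tr(2-\mathbf h)|_{I_{p,p}^{\mathbf f}}+2p}$ for any eigenvalue on $\mathscr C_{\mathfrak O}(I_{p,p+1})^{H_{p,p}}$. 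Combining with Theorem \ref{duke}, which forces $\eta^2=|-|^{\dim I_{p,p+1}}=|-|^{2p(p+1)}$, gives $tr(2-\mathbf h)|_{I_{p,p}^{\mathbf f}}=2p^2$, which directly contradicts the strict lower bound $2p^2<tr(2-\mathbf h)|_{I_{p,p}^{\mathbf f}}$ from \cite[Lemma 3.12]{sun2020} (inequality \eqref{lem:inequality}). No orbit-matching à la Proposition \ref{observation} enters. You should replace your final paragraph with this Weil-representation computation and the inequality \eqref{lem:inequality}, rather than attempting to re-derive the exponent from the unmodified Lemma \ref{key:lem}.
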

Before we give the proof of Theorem \ref{mir}, we shall apply Theorem \ref{mir} to study the relation between the exterior square $L$-function and the $H_{p,p+1}$-distinguished spherical representation $\pi$.

Let $F$ be a finite field extension of $\mathbb{Q}_p$. Let $O_F$ be the ring of integers of $F$. Let $\varpi$ be the uniformizer in $O_F$ and $|\varpi|=q^{-1}$ where $q$ is the cardinality of the residue field $O_F/\varpi O_F$ of $F$. 
Let $B(F)$ be the standard Borel subgroup of $\GL_{2p+1}(F)$ with unipotent radical $N(F)$. Let $\pi$ be a unitary spherical principal series representation of $\GL_{2p+1}(F)$ distinguished by $H_{p,p+1}$ such that
\[\pi=Ind_{B}^{\GL_{2p+1}(F)}(\chi_1\otimes\cdots\otimes\chi_{2p+1})(\mbox{normalized induction}) \]
where $\chi_i$ are unitary characters of $F^\times$. Let $K$ be a maximal open compact subgroup of $\GL_{2p+1}(F)$.
\begin{lem}\cite{shintani}
	Up to a scalar there exists a unique right $K$-invariant Whittaker function $W$, in $\pi$
	given by $W(\varpi^{\boldsymbol{\lambda}})=0$
	unless $\boldsymbol{\lambda}=(\lambda_1,\lambda_2,\cdots,\lambda_{2p+1})$ in $\mathbb{Z}^{2p+1}$ satisfies $\lambda_1\geq\lambda_2\geq\cdots\geq\lambda_{2p+1}$, where 
	\[W(\varpi^{\boldsymbol{\lambda}})=\delta_B^{1/2}(\varpi^{\boldsymbol{\lambda}})\frac{\det((\chi_i(\varpi^{\lambda_j+2p+1-j}))_{i,j})}{\det((\chi_i(\varpi^{2p+1-j}))_{i,j})} \]
	and $\delta_B$ is the modular function of $B(F)$.
\end{lem}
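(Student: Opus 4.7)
The plan is to follow Shintani's three-step classical argument. Since this is the standard Casselman--Shintani formula for $\GL_n$, the goal here is to outline the proof rather than reprove it in detail.

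For the existence and uniqueness of $W$, one combines Shalika's multiplicity-one theorem, which gives $\dim\Hom_{N(F),\psi}(\pi,\mathbb{C})=1$, with the Iwasawa decomposition $\GL_{2p+1}(F)=B(F)K$, which implies $\dim\pi^K=1$. Fixing a nonzero Whittaker functional $\ell$ and a nonzero spherical vector $v_0\in\pi^K$, the function $W(g):=\ell(\pi(g)v_0)$ is then canonical up to a scalar and automatically right $K$-invariant.

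To establish the support condition, for each simple root $\alpha_j=e_j-e_{j+1}$ let $n_{\alpha_j}(x)$ denote the corresponding one-parameter subgroup of $N(F)$. Since $n_{\alpha_j}(x)\in K$ whenever $x\in O_F$, the commutation relation $\varpi^{\boldsymbol{\lambda}}\,n_{\alpha_j}(x)=n_{\alpha_j}(\varpi^{\lambda_j-\lambda_{j+1}}x)\,\varpi^{\boldsymbol{\lambda}}$ together with the right-$K$-invariance and the left $(N,\psi)$-equivariance of $W$ yield
\[W(\varpi^{\boldsymbol{\lambda}})=\psi_F(\varpi^{\lambda_j-\lambda_{j+1}}x)\,W(\varpi^{\boldsymbol{\lambda}})\quad\text{for all }x\in O_F.\]
If $\lambda_j<\lambda_{j+1}$, then since $\psi_F$ has conductor $O_F$ one may choose $x\in O_F$ with $\psi_F(\varpi^{\lambda_j-\lambda_{j+1}}x)\neq 1$, forcing $W(\varpi^{\boldsymbol{\lambda}})=0$. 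Hence the support of $W$ is contained in the dominant chamber.

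For the explicit formula on dominant $\boldsymbol{\lambda}$, one applies Casselman's intertwining-operator method. For each $w$ in the symmetric group $S_{2p+1}$, the standard intertwining operator $M(w):\pi\to\pi^w$ sends $v_0$ to an explicit scalar multiple of the spherical vector of $\pi^w$ given by the Gindikin--Karpelevich product, while the Whittaker functionals on $\pi$ and $\pi^w$ are linked by a functional equation whose normalization cancels that Gindikin--Karpelevich factor. Summing over $w$ and invoking the Weyl denominator identity
\[\sum_{w\in S_{2p+1}}\mathrm{sgn}(w)\prod_{i=1}^{2p+1}z_{w(i)}^{\lambda_i+2p+1-i}=\det\bigl(z_i^{\lambda_j+2p+1-j}\bigr)_{i,j},\]
one identifies the resulting expression with $\delta_B^{1/2}(\varpi^{\boldsymbol{\lambda}})$ times the Schur polynomial $s_{\boldsymbol{\lambda}}(\chi_1(\varpi),\ldots,\chi_{2p+1}(\varpi))$. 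The Weyl character formula then realizes $s_{\boldsymbol{\lambda}}$ as the displayed ratio of Vandermonde-like determinants, completing the proof. The principal obstacle is in this last step: verifying that the alternating sum of Gindikin--Karpelevich factors over the full symmetric group collapses to precisely the Schur polynomial requires careful bookkeeping, but as this is Shintani's classical formula, in practice one invokes \cite{shintani}.
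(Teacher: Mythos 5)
Your outline is a correct rendition of the standard proof of the Shintani/Casselman--Shalika formula, and since the paper does not prove this lemma but simply cites Shintani, there is no competing argument in the paper to compare against; the multiplicity-one plus Iwasawa argument for existence and uniqueness, the simple-root conjugation argument for the support condition, and the intertwining-operator computation leading to the Schur polynomial are all in order. One small terminological slip worth flagging: the displayed identity you call the ``Weyl denominator identity'' is just the Leibniz expansion of the determinant $\det\bigl(z_i^{\lambda_j+2p+1-j}\bigr)_{i,j}$; the Weyl denominator identity proper is the Vandermonde factorization of $\det\bigl(z_i^{2p+1-j}\bigr)_{i,j}$, and it is the ratio of the two Leibniz expansions (i.e., the bialternant formula, a special case of the Weyl character formula) that produces the Schur polynomial $s_{\boldsymbol{\lambda}}(\chi_1(\varpi),\ldots,\chi_{2p+1}(\varpi))$ appearing in the statement.
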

Let $H'_{p,p+1}$ be the image of $H_{p,p+1}$ in $\GL_{2p+1}(F)$ under the following embedding
\[\begin{pmatrix}
	(a_{i,j})\\&(b_{i,j})
\end{pmatrix}\mapsto \begin{pmatrix}
c_{i,j}
\end{pmatrix}\in \GL_{2p+1}(F) \]
where $(a_{i,j})\in\GL_p(F)$, $(b_{i,j})\in\GL_{p+1}(F)$ and $c_{i,j}=\begin{cases}
	b_{s,t}&\mbox{ if }i=2s-1,j=2t-1;\\
	a_{s,t}&\mbox{ if }i=2s,j=2t;\\
	0&\mbox{ otherwise.}
\end{cases}$
\par
Consider the integral
\[W\mapsto \int_{N(F)\cap H'_{p,p+1}\backslash P\cap H'_{p,p+1} }W(p)|\det(p)|^sdp.  \]
It will give us a $P\cap H'_{p,p+1}$-invariant linear functional on $\pi$ when $s=0$ which is $H'_{p,p+1}$-invariant as well due to Theorem \ref{mir}. Moreover, the integral is related to the exterior square $L$-function $L(s,\pi,\Lambda^2)$ which sets up a relation between $L(s,\pi,\Lambda^2)$ and the distinction problem of $\pi$. Denote by $L(s,\pi)$ the standard $L$-function of $\pi$. Take a measure $dp$ on $P$ such that the volume of the compact subset $P\cap H'_{p,p+1}\cap K$ is $1$.
\begin{thm}
	Let $\pi=Ind_{B(F)}^{\GL_{2p+1}(F)}(\chi_1\otimes\cdots\otimes\chi_{2p+1})$ be a unitary spherical representation of $\GL_{2p+1}(F)$. If $\pi$ is distinguished by $H'_{p,p+1}$, then $L(s,\pi,\Lambda^2)L(s,\pi)$
	has a pole at $s=0$.
\end{thm}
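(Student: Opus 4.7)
The plan is to realize the $H'_{p,p+1}$-invariant functional on $\pi$ as the value at $s = 0$ of the zeta integral displayed just before the theorem, and then to evaluate that integral on the spherical Whittaker vector by combining Shintani's formula with a Cauchy/Littlewood-type identity for Schur polynomials.

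First, apply Theorem \ref{mir}: since $\pi$ is $H'_{p,p+1}$-distinguished and the space $\Hom_{H'_{p,p+1}}(\pi, \mathbb{C})$ is at most one-dimensional (via Theorem \ref{thm:A} applied with trivial $\mu$), there is a unique (up to scalar) nonzero invariant functional $\ell$. Because the spherical subspace of $\pi$ is one-dimensional, $\ell(W^\circ) \neq 0$. Via Theorem \ref{mir}, $\ell$ is also characterized as the unique nonzero $P \cap H'_{p,p+1}$-invariant functional on $\pi$.

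For $\Re(s) \gg 0$, the integral
\[
Z(s, W) \;=\; \int_{N(F) \cap H'_{p,p+1} \backslash P \cap H'_{p,p+1}} W(p) |\det(p)|^s\, dp
\]
converges absolutely and is $P \cap H'_{p,p+1}$-equivariant; standard estimates on the asymptotics of Whittaker functions give its meromorphic continuation to all of $s \in \mathbb{C}$. By uniqueness, its value (or residue) at $s = 0$ is a scalar multiple of $\ell$. To compute $Z(s, W^\circ)$, perform the Iwasawa decomposition for $P \cap H'_{p,p+1} \cong \GL_p(F) \times P_{p+1}$ with respect to the diagonal torus $T \cap H'_{p,p+1}$ and the standard maximal compact subgroup. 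Using the right $K$-invariance of $W^\circ$, the integral reduces to
\[
Z(s, W^\circ) \;=\; \sum_{\lambda \in \Lambda_H^+} c_\lambda \cdot W^\circ(\varpi^\lambda) \cdot q^{-s|\lambda|},
\]
where $\Lambda_H^+ \subset \mathbb{Z}^{2p+1}$ is the dominance subcone cut out by the interlaced Levi structure of $H'_{p,p+1}$ and $c_\lambda$ is an explicit measure-theoretic weight (incorporating the modular character $\delta_B^{1/2}$). Substituting Shintani's formula expresses the summand as a Schur polynomial $s_\lambda(\chi_1(\varpi), \ldots, \chi_{2p+1}(\varpi))$ multiplied by $c_\lambda \, q^{-s|\lambda|}$. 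A suitable Cauchy/Littlewood identity, adapted to the subcone $\Lambda_H^+$, collapses this sum to the Euler product
\[
Z(s, W^\circ) \;=\; r(s) \cdot L(s, \pi, \Lambda^2)\, L(s, \pi),
\]
where $r(s)$ is an elementary rational function of $q^{-s}$ whose only contribution at $s = 0$ comes from the Haar-measure data on $N \cap H'_{p,p+1} \backslash P \cap H'_{p,p+1}$.

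Finally, since $\ell(W^\circ) \neq 0$ equals (up to a nonzero scalar) the value or residue of $Z(s, W^\circ)$ at $s = 0$ after meromorphic continuation, and since the naive integral cannot produce a nonzero functional at $s = 0$ without a balancing pole, the product $L(s, \pi, \Lambda^2) L(s, \pi)$ must have a pole at $s = 0$. The main obstacle is the explicit combinatorial step: correctly identifying $\Lambda_H^+$ and the weight $c_\lambda$ arising from the interlaced embedding, and selecting the precise Cauchy/Littlewood identity that yields $L(s, \pi, \Lambda^2) L(s, \pi)$ in the variable $s$ itself (rather than in a shifted or doubled variable). This is a delicate but purely combinatorial exercise, and it is the only nontrivial ingredient beyond Theorem \ref{mir} and the routine convergence and continuation of the zeta integral.
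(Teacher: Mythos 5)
Your overall strategy matches the paper's: unfold the zeta integral via the Iwasawa decomposition, feed in Shintani's formula, collapse the resulting Schur-polynomial sum via a Littlewood-type identity (the paper cites Macdonald, \S1.5 Example 4), and read off $L(s,\pi,\Lambda^2)L(s,\pi)$. However, the final step of your argument has a genuine gap. You conclude that \emph{``since the naive integral cannot produce a nonzero functional at $s=0$ without a balancing pole, the product $L(s,\pi,\Lambda^2)L(s,\pi)$ must have a pole at $s=0$.''} This does not follow. If the prefactor $r(s)$ were nonvanishing at $s=0$, then $Z(0,W^\circ)=r(0)\,L(0,\pi,\Lambda^2)L(0,\pi)$ would simply be a finite nonzero number, entirely consistent with $\ell(W^\circ)\neq 0$ and with $L(0,\pi,\Lambda^2)L(0,\pi)<\infty$; no contradiction and no forced pole. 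Your description of $r(s)$ as an ``elementary rational function \ldots\ whose only contribution at $s=0$ comes from the Haar-measure data'' is not the point: the actual prefactor produced by the Littlewood identity is $1-q^{-(2p+1)s}\prod_i\chi_i(\varpi)$, and the crucial observation (which you omit) is that $H_{p,p+1}$-distinction forces $\prod_i\chi_i$ to be trivial (the center of $\GL_{2p+1}$ lies in $H_{p,p+1}$), so that $r(s)=1-q^{-(2p+1)s}$ has a simple zero at $s=0$. It is precisely this zero that forces the pole of the $L$-function product.

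A smaller but real problem: your claim that $\ell(W^\circ)\neq 0$ ``because the spherical subspace of $\pi$ is one-dimensional'' is a non sequitur. One-dimensionality of the $K$-fixed vectors says nothing about whether a given invariant functional vanishes there. Nonvanishing of the invariant functional on the spherical vector requires a separate argument (or an appeal to the nonvanishing of the series representing $Z(s,W^\circ)$ in its region of convergence), which you do not supply.
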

\begin{proof} Suppose that $B(F)=AN(F)$ where $A$ is the split torus and so $A\subset H'_{p,p+1}$. Then $P\cap H'_{p,p+1}=(N(F)\cap H'_{p,p+1})(P\cap A)(P\cap H'_{p,p+1}\cap K)$. Denote by $\delta_{P\cap H'_{p,p+1}}$ the modular character of $(N(F)\cap H'_{p,p+1})(P\cap A)$.
	Note that $\delta_B(\varpi^{\boldsymbol{\lambda}})^{1/2}=\delta_{P\cap H'_{p,p+1}}(\varpi^{\boldsymbol{\lambda}}) $. Then
	\[\int_{N(F)\cap H'_{p,p+1}\backslash P\cap H'_{p,p+1} }W(p)|\det(p)|^sdp=\sum_{\lambda_1\geq \cdots\geq\lambda_{2p}\geq\lambda_{2p+1}=0 }\frac{\det((\chi_i(\varpi^{\lambda_j+2p+1-j}))_{i,j})}{\det((\chi_i(\varpi^{2p+1-j}))_{i,j})}(q^{-s})^{tr\boldsymbol{\lambda}}  \]
	which equals $$\Big(1-q^{-(2p+1)s}\prod_i\chi_i(\varpi)\Big)\cdot\prod_{i<j}(1-\chi_i(\varpi)\chi_j(\varpi)q^{-2s}) ^{-1}\cdot\prod_i(1-\chi_i(\varpi)q^{-s})^{-1}.$$ (See \cite[\S1.5 Example 4]{macdonald}.)
	Since $\pi$ is distinguished by $H_{p,p+1}'$, $\prod_i\chi_i$ is trivial. Thus
	$$\lim_{s\rightarrow0} (1-q^{-(2p+1)s})L(s,\pi,\Lambda^2)L(s,\pi)\neq0.$$ Therefore $L(0,\pi,\Lambda^2)L(0,\pi)=\infty$.
\end{proof}

\subsection{Proof of Theorem \ref{mir}}
This subsection is devoted to a proof of Theorem \ref{mir}. The basic ideas come from \cite{sun2012mult,sun2020}.  The generalized function $f$ will be restricted to a smaller open subset which can be handled easily. It will give us a very strict condition for the support of $f$.  Then we will show that any $H_{p,p}$-invariant tempered generalized function on $I_{p,p+1}$ is invariant under transposition, which will imply Theorem \ref{mir}.

From Proposition \ref{prop:max}, we have seen that
\[I_{p,p+1}=I_{p,p}\oplus V\oplus V^\ast \]
where $V\oplus V^\ast$ is equipped with a natural non-degenerate quadratic form
\[(v,v^\ast)\mapsto v^\ast(v) \]
for $v\in V$ and $v^\ast\in V^\ast$, which induces a symmetric bilinear form $\langle -,-\rangle$.
Let $F^\times$ act on $\mathscr{C}_{\mathcal{N}_{p,p+1}}(I_{p,p+1})$ by
\[t\cdot f(x,y,v,v^\ast)=f(t^{-1}x,t^{-1}y,t^{-1}v,t^{-1}v^\ast) \]
for $(x,y)\in I_{p,p},v\in V$ and $v^\ast\in V^\ast$.
Recall that
\[\begin{pmatrix}
a&0\\0&b
\end{pmatrix}\cdot f(x,y,v,v^\ast)=f(a^{-1}xb,b^{-1}ya,a^{-1}v,v^\ast a ) \]
for $a,b\in\GL_p(F)$ and
\[\sigma\cdot f(x,y,v,v^\ast)=f(y^t,x^t,(v^\ast)^t,v^t). \]
	Let $(\mathbf{e},v_0,v_0^\ast)\in \mathcal{N}_{p,p+1}$ and $\mathfrak{O}=(H_{p,p}\times F^\times)(\mathbf{e},v_0,v_0^\ast)$ be a $H_{p,p}\times F^\times$-orbit in $\mathcal{N}_{p,p+1}$. Then $\mathbf{e}=\begin{pmatrix}
0&x_0\\y_0&0
\end{pmatrix}\in \mathcal{N}_{p,p}$ and $v_0^\ast(x_0\circ y_0)^kv_0=0$ for any non-negative integer $k$. (See \cite[\S6.1]{aizenbud2013partial}.)

	 Let $\{\mathbf{h},\mathbf{e},\mathbf{f}\}$ be a graded $\mathfrak{sl}_2(F)$-triple (see \eqref{sl}) related to $I_{p,p}$, which  integrates to an algebraic homomorphism
\[ \mathrm{SL}_2(F)\longrightarrow \GL_{2p}(F). \] 
Denote by $D_t$ the image of $\begin{pmatrix}
t\\&t^{-1}
\end{pmatrix}$ in $H_{p,p}$.
Let $$T:=\{(D_t,t^{-2})\in H_{p,p}\times F^\times|t\in F^\times \}$$ be a closed subgroup in $H_{p,p}\times F^\times$ which
fixes the element $\mathbf{e}$. Define
 \begin{equation*}
  E(\mathbf{e}):=\Big\{(v,v^\ast)\in V\times V^\ast\big|\begin{matrix} 
 v^\ast(x_0\circ y_0)^kv=0\mbox{ for all non-negative integers }k
 \end{matrix} \Big\}.
 \end{equation*}
 and  
 \[V(\mathbf{e}):=\{(v,v^\ast)\in E(\mathbf{e})|\langle h\cdot(v,v^\ast),(v,v^\ast)\rangle=0\mbox{ for any }h\in\langle D_t\rangle \}. \]

The  following lemma is similar to \cite[Lemma 3.13]{sun2020}.
\begin{lem}\label{sun:lem} 
	Let $\eta$ be an eigenvalue for the action of $F^\times$ on $\mathscr{C}_\mathfrak{O}(I_{p,p+1})^{H_{p,p}}$.  Then 
	\[\eta^2=
|-|^{tr(2-\mathbf{h})|_{I_{p,p}^\mathbf{f}}+2p}\kappa_1\kappa_2^{-2}
	 \]
	 for some  pseudo-algebraic characters $\kappa_1$ and $\kappa_2$  of $F^\times$.
\end{lem}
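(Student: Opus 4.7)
The plan is to follow the same blueprint as Lemma 3.5 above (itself modeled on \cite[Lemma 3.13]{sun2020}), carefully accounting for the extra $V\oplus V^*$ summand of $I_{p,p+1}$ that is absent from the $I_{p,p}$ situation treated in the earlier lemma.

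\textbf{Frobenius descent.} First I would view $\mathfrak{O}=(H_{p,p}\times F^\times)(\mathbf{e},v_0,v_0^*)$ as a single orbit of $H_{p,p}\times F^\times$ on $I_{p,p+1}$ and apply Frobenius descent (\cite[Theorem 2.5.7]{dima2009duke} in the non-archimedean case, and its Nash-manifold analogue in the archimedean case). This converts an element of $\mathscr{C}_\mathfrak{O}(I_{p,p+1})^{H_{p,p}}$ on which $F^\times$ acts with eigenvalue $\eta$ into a tempered generalized function on the normal space $N:=N^{I_{p,p+1}}_{\mathfrak{O},(\mathbf{e},v_0,v_0^*)}$ that is equivariant under the stabilizer $S:=(H_{p,p}\times F^\times)_{(\mathbf{e},v_0,v_0^*)}$, with character equal to $\eta$ twisted by the ratio of modular characters of $S$ and $H_{p,p}\times F^\times$ on the tangent direction to the orbit.

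\textbf{Structure of $N$ and the torus $T$.} Using the graded $\mathfrak{sl}_2$-triple $\{\mathbf{h},\mathbf{e},\mathbf{f}\}$ and Jacobson--Morozov, one has $I_{p,p}=[\mathfrak{h}_{p,p},\mathbf{e}]\oplus I_{p,p}^{\mathbf{f}}$, and a parallel description of $V\oplus V^*$ modulo the tangent space to the orbit of $(v_0,v_0^*)$. The torus $T=\{(D_t,t^{-2}):t\in F^\times\}$ sits inside $S$ by construction (since $D_t\mathbf{e}D_t^{-1}=t^2\mathbf{e}$ cancels the dilation $t^{-2}$), and the quotient $S/T$ carries only pseudo-algebraic characters after squaring. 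Hence the essential homogeneity information on $\eta$ is captured by the character through which $T\cong F^\times$ acts on $\det N$.

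\textbf{Character computation.} The $I_{p,p}^{\mathbf{f}}$ part of $N$ contributes $|-|^{\mathrm{tr}(2-\mathbf{h})|_{I_{p,p}^{\mathbf{f}}}}$ to the $T$-character on $\det N$, reproducing the Chen--Sun calculation. The $V\oplus V^*$ part contributes $|-|^{2p}$: the scaling factor $t^{-2}$ acts on the $2p$ coordinates with weight $1$ each, while the $D_t$-weights sum to zero on each $\mathfrak{sl}_2$-isotypic component by self-duality of $V\oplus V^*$ under the natural pairing. Passing to $\eta^2$ absorbs the sign ambiguities from the double cover, from finite quotients of $S/T$, and from the half-density twist in Frobenius descent into two pseudo-algebraic characters $\kappa_1$ and $\kappa_2$, yielding
\[
\eta^2 = |-|^{\mathrm{tr}(2-\mathbf{h})|_{I_{p,p}^{\mathbf{f}}} + 2p}\,\kappa_1\kappa_2^{-2}.
\]

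\textbf{Main obstacle.} The hard step is the precise identification of $S$ together with its action on the transversal directions in $V\oplus V^*$. In the pure $I_{p,p}$ case treated in \cite{sun2020} this is almost immediate from the $\mathfrak{sl}_2$-structure, but here $(v_0,v_0^*)$ lies inside the constrained subvariety $E(\mathbf{e})$, and one must verify that after quotienting by the orbit-tangent direction and taking squared characters, the non-$T$ contributions to the stabilizer character are pseudo-algebraic. The sets $E(\mathbf{e})$ and $V(\mathbf{e})$ introduced just before the lemma are set up precisely to make this verification tractable, and once it is carried out the formula above follows directly from the determinant computation.
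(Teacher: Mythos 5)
The opening stages of your argument are on the right track and match the paper's strategy: a submersion/Frobenius-descent reduction to distributions on $I_{p,p}^{\mathbf{f}}\oplus V\oplus V^\ast$ equivariant under the one-parameter torus $T=\{(D_t,t^{-2})\}$, with the $I_{p,p}^{\mathbf{f}}$ contribution computed as in Chen--Sun. However, the crucial step --- extracting the factor $|-|^{2p}$ from the $V\oplus V^\ast$ direction --- is where the proposal has a genuine gap.

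You assert that the $D_t$-weights ``sum to zero on each $\mathfrak{sl}_2$-isotypic component by self-duality of $V\oplus V^*$ under the natural pairing,'' and use this to conclude that only the $t^{-2}$ dilation contributes, giving $|-|^{2p}$. The cancellation of weights (i.e.\ $\det(D_t|_{V\oplus V^\ast})=1$) is of course true, but a determinant computation of this sort only governs the $T$-eigenvalue on distributions supported at the \emph{origin}, exactly as for the $I_{p,p}^{\mathbf{f}}$ summand, whose distributions concentrate at $\{0\}$. After the Frobenius step, the relevant distributions on the $V\oplus V^\ast$ factor are supported on the positive-dimensional cone $E(\mathbf{e})$, and for such distributions the $T$-eigenvalue depends on more than $\det(D_t)$: a homogeneous density along a $D_t$-eigenline in $E(\mathbf{e})$ of homogeneity degree $s$ transforms with an $s$-dependent power of $|t|$, not with $|\det D_t|^{1/2}$. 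So the argument as written would not pin down $\eta_0$.

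The paper handles this by an argument that is entirely absent from your proposal: it further restricts the support to $V(\mathbf{e})\subseteq E(\mathbf{e})$, a set chosen precisely so that the quadratic form $\langle D_t\cdot(v,v^\ast),(v,v^\ast)\rangle$ vanishes identically there; it then realizes $\mathscr{C}(V\oplus V^\ast)$ as a model of the Weil representation of $\Mp_{4p}(F)$, and writes the Levi element $\mathrm{diag}(X_t,X_t^{-1})\in\Sp_{4p}(F)$ as a product of unipotent factors via a Bruhat-type identity. On distributions supported in $V(\mathbf{e})$ those unipotent factors act by the trivial oscillatory multiplier $\psi(\langle N\cdot,\cdot\rangle)=1$ (and its Fourier conjugate), which is exactly what kills the $D_t$-dependence and leaves only the contribution of the central $t^{-2}$-dilation, namely $|t^{-2}|^{\frac{1}{2}\dim(V\oplus V^\ast)}$ up to a pseudo-algebraic character. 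Without this Weil-representation step, the definitions of $E(\mathbf{e})$ and $V(\mathbf{e})$ that you flag as ``set up precisely to make this verification tractable'' are not actually being used, and the proposed determinant argument does not close the gap.
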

\begin{proof}
	Consider the map
	\begin{equation}
	\label{submersion}
	H_{p,p}\times F^\times\times (I_{p,p}^\mathbf{f}\oplus V\oplus V^\ast)\longrightarrow I_{p,p}\oplus V\oplus V^\ast 
	\end{equation}
	via $(h,\xi,v,v^\ast)\mapsto h.(\mathbf{e}+\xi+v+v^\ast)$ for $\xi\in I_{p,p}^\mathbf{f},h\in H_{p,p}\times F^\times,v\in V$ and $v^\ast\in V^\ast$, which is
	submersive at every point of $H_{p,p}\times F^\times\times\{(0,v_0,v_0^\ast)\}$. Moreover,  $H_{p,p}\times F^\times\times\{(0,v_0,v_0^\ast)\}$ is open in the inverse image of $\mathfrak{O}=(H_{p,p}\times F^\times).(\mathbf{e},v_0,v_0^\ast)$ under the map \eqref{submersion}. (See \cite[Page 18]{sun2020}.)
	Thanks to \cite[Lemma 2.7]{jiang2011trans},
	the restriction map yields an injective linear map
	\[ \mathscr{C}_{\mathfrak{O}}(I_{p,p}\oplus V\oplus V^\ast)^{H_{p,p}\times F^\times,\mathbf{1}\times\eta}\rightarrow
	\mathscr{C}_{\{0\}\times E(\mathbf{e})}(I_{p,p}^\mathbf{f}\oplus V\oplus V^\ast)^{T,\mathbf{1}\times\eta|_T} \]
	where $\mathbf{1}\times\eta|_{T}((D_t,t^{-2}))=\eta(t)^{-2}$.
	It is easy to see that the representation $\mathscr{C}_{\{0\}}(I_{p,p}^\mathbf{f})$
	of $T$ is completely reducible and every eigenvalue has the form
	\[ (D_t,t^{-2})\mapsto |t|^{tr(\mathbf{h}-2)|_{I_{p,p}^\mathbf{f}}}\kappa_1(t)^{-1}\]
where $\kappa_1$ is a pseudo-algebraic character of $F^\times$. Thus $$\eta(t)^2=|t|^{tr(2-\mathbf{h})|_{I_{p,p}^\mathbf{f}}}\kappa_1(t)\eta_0^{-1}(t)$$ for any $t\in F^\times$, where $\eta_0$ is an eigenvalue for the action of $T$ on $\mathscr{C}_{E(\mathbf{e})}(V\oplus V^\ast)$.
  In order to compute $\eta_0$, we will restrict $\eta_0$ to a smaller subspace $\mathscr{C}_{V(\mathbf{e})}(V\oplus V^\ast)$ of $\mathscr{C}_{E(\mathbf{e})}(V\oplus V^\ast)$. 
 \par
 Define a symplectic form on $(V\oplus V^\ast)\times (V\oplus V^\ast)$ as follow
 \[ < (x_1,y_1),(x_2,y_2)>:=\langle x_1,y_2\rangle-\langle y_1,x_2\rangle   \]
 where $x_i,y_i\in V\oplus V^\ast$. Then $V\oplus V^\ast$ is a maximal isotropic subspace.
 Consider the Weil representation on $\mathrm{Mp}_{4p}(F)=\mathrm{Mp}((V\oplus V^\ast)\times(V\oplus V^\ast),<-,->)$.
 Under the Weil representation $\omega_\psi$,
 \[\begin{cases}
 \omega_\psi\begin{pmatrix}
 A\\& (A^t)^{-1}
 \end{pmatrix}\varphi(x)=|\det A|^{1/2}\varphi(A^{-1}x),&\mbox{ for }A\in\GL_{2p}(F),\\
 \omega_\psi\begin{pmatrix}
 \mathbf{1}_{2p}&N\\&\mathbf{1}_{2p}
 \end{pmatrix}\varphi(x)=\psi(\langle Nx,x\rangle)\varphi(x),&\mbox{ for }N=N^t\in Mat_{2p,2p}(F),
 \end{cases}
 \]
 for $\varphi\in S(V\oplus V^\ast)$ and $x\in V\oplus V^\ast$. We may extend $\omega_\psi$ from the Schwartz  space $S(V\oplus V^\ast)$ to the generalized function space $\mathscr{C}(V\oplus V^\ast)$.
 Note that
 \[\begin{pmatrix}
 X\\& X^{-1}
 \end{pmatrix}=\begin{pmatrix}
 \mathbf{1}_n&-X\\&\mathbf{1}_n
 \end{pmatrix}\begin{pmatrix}
 \mathbf{1}_n\\ X^{-1}&\mathbf{1}_n
 \end{pmatrix}\begin{pmatrix}
 \mathbf{1}_n&\mathbf{1}_n-X\\&\mathbf{1}_n
 \end{pmatrix}\begin{pmatrix}
 \mathbf{1}_n\\-\mathbf{1}_n&\mathbf{1}_n
 \end{pmatrix}\begin{pmatrix}
 \mathbf{1}_n&\mathbf{1}_n
 \\&\mathbf{1}_n	\end{pmatrix}  \]
 holds for any $X\in\GL_{n}(F)$. Here we only need  the case that $X$ is a diagonal matrix. 
 Denote
 $D_t=\begin{pmatrix}
 A_t\\&B_t
 \end{pmatrix}$ and $X_t=\begin{pmatrix}
 A_t\\& A_t^{-1}
 \end{pmatrix}$. 
 Then the action of $D_t$ on $V\oplus V^\ast$ is given by
 $$(v,v^\ast)\mapsto (A_tv,v^\ast A_t^{-1}).$$ 
It is obvious that
 \begin{equation*}
 \begin{split}\omega_\psi
 \begin{pmatrix}
 \mathbf{1}_{2p}& X_t\\&\mathbf{1}_{2p}
 \end{pmatrix}f(v,v^\ast)&=\psi(\langle (A_tv,v^\ast A_t^{-1}),(v,v^\ast)\rangle)f(v,v^\ast)
 \\ 
 &=f(v,v^\ast)
 \end{split}
 \end{equation*}
 for any $f\in\mathscr{C}_{V(\mathbf{e})}(V\oplus V^\ast)$. Then
 $\begin{pmatrix}
 \mathbf{1}_{2p}&X_t\\ &\mathbf{1}_{2p}
 \end{pmatrix}$  acts on $\mathscr{C}_{V(\mathbf{e})}(V\oplus V^\ast)$
 trivially and so is $\begin{pmatrix}
 \mathbf{1}_{2p}\\ X_t^{-1}&\mathbf{1}_{2p}
 \end{pmatrix}$. 
 Thus $D_t$ does not contribute to $\eta_0$. Therefore $\eta_0$ has the form
 \[(D_t,t^{-2})\mapsto |t^{-2}|^{\frac{1}{2}\dim(V\oplus V^\ast)}\kappa_2^{-1}(t^{-2})=|t|^{-2p}\kappa_2(t)^2 \]
 and so $\eta(t)^2=|t|^{tr(2-\mathbf{h})|_{I_{p,p}^\mathbf{f}}}\kappa_1(t)\cdot |t|^{2p}\kappa_2(t)^{-2}$ for  some pseudo-algebraic characters $\kappa_i$ of $F^\times$.
\end{proof}
Let $\mathbf{e}$ be a nilpotent element in $I_{p,p}$. Let $\{\mathbf{h},\mathbf{e},\mathbf{f} \}$ be the
$\mathfrak{sl}_2(F)$-triple (see \eqref{sl}). Then Chen-Sun \cite[Lemma 3.12]{sun2020} proved
\begin{equation}\label{lem:inequality}
2p^2< tr(2-\mathbf{h})|_{I_{p,p}^\mathbf{f}}\leq 4p^2. \end{equation}

Now we can give a proof of Theorem \ref{mir}
\begin{proof}[Proof of Theorem \ref{mir}] 
	Following \cite[Corollary 5.1]{maxmirabolic}, 
	it suffices to show that
	\begin{equation}\label{6.4}
	\mathscr{C}(\GL_{2p+1}(F))^{(P\cap H_{p,p+1})\times H_{p,p+1}}=\mathscr{C}(\GL_{2p+1}(F))^{H_{p,p+1}\times H_{p,p+1}}. \end{equation}
	Note that $\mathscr{C}(\GL_{2p+1}(F)/H_{p,p+1})^{P\cap H_{p,p+1}}\cong \mathscr{C}(\GL_{2p+1}(F))^{(P\cap H_{p,p+1})\times H_{p,p+1}}$. 
	Thus \eqref{6.4} is equivalent to 
	\begin{equation}
	\label{max}
	\mathscr{C}(\GL_{2p+1}(F)/H_{p,p+1})^{P\cap H_{p,p+1}}=\mathscr{C}(\GL_{2p+1}(F)/H_{p,p+1})^{H_{p,p+1}}.
	\end{equation}
	 We shall show that 
	\[\mathscr{C}(\GL_{2p+1}(F)/H_{p,p+1})^{\tilde{H}_{p,p},\chi}=0. \]
	Then the identity \eqref{max} follows from the fact
	that $P\cap H_{p,p+1}$ and its transposition generate the whole group $H_{p,p+1}$.
	Applying Theorem \ref{thm2.1} and Lemma \ref{Jac:lem}, it is enough to show that
	\begin{equation}\label{6.6}
		\mathscr{C}_{\mathcal{N}_{p,p+1}}(I_{p,p}\oplus V\oplus V^\ast)^{\tilde{H}_{p,p},\chi}=0 \end{equation}
	with $\dim V=\dim V^\ast=p$.
	
	Now the rest of this part is devoted to proving the equality \eqref{6.6}.
	Take $(\mathbf{e},v_0,v_0^\ast)\in \mathcal{N}_{p,p+1}$ and the $\mathfrak{sl}_2(F)$-triple $\{\mathbf{h},\mathbf{e},\mathbf{f} \}$ related to $I_{p,p}$. Denote $\mathfrak{O}:=H_{p,p}(\mathbf{e},v_0,v_0^\ast)\subset \mathcal{N}_{p,p+1}$.
	Recall that $F^\times$ acts on $\mathscr{C}_{\mathfrak{O}}(I_{p,p}\oplus V\oplus V^\ast)$
	by
	\[t.f(x,y,v,v^\ast)=f(t^{-1}x,t^{-1}y,t^{-1}v,t^{-1}v^\ast) \]
	for $t\in F^\times,(x,y)\in I_{p,p}$ and $f\in \mathscr{C}_{\mathfrak{O}}(I_{p,p}\oplus V\oplus V^\ast)$.
	Let $\eta$ be an eigenvalue for the action of $F^\times$ on $\mathscr{C}_{\mathfrak{O}}(I_{p,p}\oplus V\oplus V^\ast)$. 
	By Lemma \ref{sun:lem},
	$\eta^2=|-|^{tr(2-\mathbf{h})|_{I_{p,p}^\mathbf{f}}+2p}$.
	By Theorem \ref{duke}, one has $\eta^2=|-|^{\dim I_{p,p+1}}$ and so 
	 $$tr(2-\mathbf{h})|_{I_{p,p}^\mathbf{f}}+2p=2p(p+1),$$
	which contradicts the inequality \eqref{lem:inequality}. 
	This finishes the proof.
\end{proof}
\begin{rem}
	It seems that our method fails for Theorem \ref{mir} when $F$ is archimedean. The reason is that there are many possible solutions for
	\[\eta^2=|-|^{\dim I_{p,p+1}}\kappa^{-2} \]
	due to Lemma \ref{sun:lem}.
\end{rem}

\subsection*{Acknowlegement} The author would like to thank Dmitry Gourevitch for useful discussions. The author also wants to thank Maxim Gurevich for explaining his thesis work \cite{maxmirabolic}. The author would also like to thank the anonymous referee for many useful comments which greatly improve
the exposition of this paper.
This work was partially supported by
the ERC, StG grant number 637912 and ISF grant 249/17.

\bibliographystyle{amsalpha}
\bibliography{linear}

\providecommand{\bysame}{\leavevmode\hbox to3em{\hrulefill}\thinspace}
\providecommand{\MR}{\relax\ifhmode\unskip\space\fi MR }
\providecommand{\MRhref}[2]{%
  \href{http://www.ams.org/mathscinet-getitem?mr=#1}{#2}
}
\providecommand{\href}[2]{#2}
\begin{thebibliography}{AGRS10}

\bibitem[AG09a]{dima2009duke}
Avraham Aizenbud and Dmitry Gourevitch, \emph{Generalized {H}arish-{C}handra
  descent, {G}elfand pairs, and an {A}rchimedean analog of {J}acquet-{R}allis's
  theorem}, Duke Math. J. \textbf{149} (2009), no.~3, 509--567, With an
  appendix by the authors and Eitan Sayag. \MR{2553879}

\bibitem[AG09b]{AG2009selecta}
\bysame, \emph{Multiplicity one theorem for {$({\rm GL}_{n+1}(\Bbb R),{\rm
  GL}_n(\Bbb R))$}}, Selecta Math. (N.S.) \textbf{15} (2009), no.~2, 271--294.
  \MR{2529937}

\bibitem[AGRS10]{AGRS}
Avraham Aizenbud, Dmitry Gourevitch, Stephen Rallis, and G\'{e}rard Schiffmann,
  \emph{Multiplicity one theorems}, Ann. of Math. (2) \textbf{172} (2010),
  no.~2, 1407--1434. \MR{2680495}

\bibitem[AGS08]{eitan2008comp}
Avraham Aizenbud, Dmitry Gourevitch, and Eitan Sayag, \emph{{$({\rm
  GL}_{n+1}(F),{\rm GL}_n(F))$} is a {G}elfand pair for any local field {$F$}},
  Compos. Math. \textbf{144} (2008), no.~6, 1504--1524. \MR{2474319}

\bibitem[Aiz13]{aizenbud2013partial}
Avraham Aizenbud, \emph{A partial analog of the integrability theorem for
  distributions on {$p$}-adic spaces and applications}, Israel J. Math.
  \textbf{193} (2013), no.~1, 233--262. \MR{3038552}

\bibitem[Ber84]{bernstein1984}
J.~N. Bernstein, \emph{Le ``centre'' de {B}ernstein}, Representations of
  reductive groups over a local field, Travaux en Cours, Hermann, Paris, 1984,
  Edited by P. Deligne, pp.~1--32. \MR{771671}

\bibitem[Car15]{carmeli2015stability}
Shachar Carmeli, \emph{On the stability and {G}elfand property of symmetric
  pairs}, arXiv preprint arXiv:1511.01381 (2015).

\bibitem[CS20]{sun2020}
Fulin Chen and Binyong Sun, \emph{Uniqueness of twisted linear periods and
  twisted {S}halika periods}, Sci. China Math. \textbf{63} (2020), no.~1,
  1--22. \MR{4047168}

\bibitem[FJ93]{FJ1993}
Solomon Friedberg and Herv\'{e} Jacquet, \emph{Linear periods}, J. Reine Angew.
  Math. \textbf{443} (1993), 91--139. \MR{1241129}

\bibitem[Gur17]{maxmirabolic}
Maxim Gurevich, \emph{A distributional treatment of relative mirabolic
  multiplicity one}, J. Lie Theory \textbf{27} (2017), no.~2, 397--417.
  \MR{3550987}

\bibitem[JR96]{jacquet1996linear}
Herv\'{e} Jacquet and Stephen Rallis, \emph{Uniqueness of linear periods},
  Compositio Math. \textbf{102} (1996), no.~1, 65--123. \MR{1394521}

\bibitem[JSZ11]{jiang2011trans}
Dihua Jiang, Binyong Sun, and Chen-Bo Zhu, \emph{Uniqueness of
  {G}inzburg-{R}allis models: the {A}rchimedean case}, Trans. Amer. Math. Soc.
  \textbf{363} (2011), no.~5, 2763--2802. \MR{2763736}

\bibitem[Kem15]{kemarsky}
Alexander Kemarsky, \emph{Distinguished representations of {$GL_n(\Bbb{C})$}},
  Israel J. Math. \textbf{207} (2015), no.~1, 435--448. \MR{3358053}

\bibitem[KR71]{kostant}
B.~Kostant and S.~Rallis, \emph{Orbits and representations associated with
  symmetric spaces}, Amer. J. Math. \textbf{93} (1971), 753--809. \MR{311837}

\bibitem[Mac95]{macdonald}
I.~G. Macdonald, \emph{Symmetric functions and {H}all polynomials}, second ed.,
  Oxford Mathematical Monographs, The Clarendon Press, Oxford University Press,
  New York, 1995, With contributions by A. Zelevinsky, Oxford Science
  Publications. \MR{1354144}

\bibitem[Mez21]{mezer2019multiplicity}
Dor Mezer, \emph{Multiplicity one theorem for {$({\rm GL}_{n+1}(F),{\rm
  GL}_n(F))$} over a local field of positive characteristic}, Math. Zeit.
  \textbf{297} (2021), no.~3-4, 1383--1396.

\bibitem[Shi76]{shintani}
Takuro Shintani, \emph{On an explicit formula for class-{$1$} ``{W}hittaker
  functions'' on {$GL_{n}$} over $p$-adic fields}, Proc. Japan Acad.
  \textbf{52} (1976), no.~4, 180--182. \MR{407208}

\bibitem[Sun12]{sun2012mult}
Binyong Sun, \emph{Multiplicity one theorems for {F}ourier-{J}acobi models},
  Amer. J. Math. \textbf{134} (2012), no.~6, 1655--1678. \MR{2999291}

\bibitem[SZ11]{sunzhu2011}
Binyong Sun and Chen-Bo Zhu, \emph{A general form of {G}elfand-{K}azhdan
  criterion}, Manuscripta Math. \textbf{136} (2011), no.~1-2, 185--197.
  \MR{2820401}

\bibitem[SZ12]{sunzhu2012annals}
\bysame, \emph{Multiplicity one theorems: the {A}rchimedean case}, Ann. of
  Math. (2) \textbf{175} (2012), no.~1, 23--44. \MR{2874638}

\end{thebibliography}

\end{document}